\newtheorem{thm}{Theorem}[section]
\newtheorem{prop}[thm]{Proposition}
\newtheorem{lem}[thm]{Lemma}
\theoremstyle{remark}
\newtheorem{rem}[thm]{Remark}
\newtheorem{ex}[thm]{Example}
\newcommand{\ie}{{\it i.e.}}
\newcommand{\eg}{{\it e.g.}}
\begin{document}

\title{Computation of free boundary minimal surfaces \\ via extremal Steklov eigenvalue problems}

\author{Chiu-Yen Kao}
\address{Department of Mathematical Sciences, Claremont McKenna College, Claremont, CA 91711}
\email{ckao@cmc.edu}

\author{Braxton Osting}
\address{Department of Mathematics, University of Utah, Salt Lake City, UT}
\email{osting@math.utah.edu}

\author{\'Edouard Oudet}
\address{LJK, Universit\'e Grenoble Alpes, France}
\email{edouard.oudet@imag.fr}

\subjclass[2010]{
35P05, 
35P15, 
49Q05,  
65N25. 
}

\keywords{Steklov eigenvalue; eigenvalue optimization; free boundary minimal surface}

\date{\today}

\begin{abstract} 
Recently Fraser and Schoen  showed that the solution of a certain extremal Steklov eigenvalue problem on a compact surface with boundary can be used to generate a free boundary minimal surface, i.e., a surface contained in the ball that has (i) zero mean curvature and  (ii) meets the boundary of the ball orthogonally (doi:\href{https://doi.org/10.1007/s00222-015-0604-x}{10.1007/s00222-015-0604-x}). 
In this paper, we develop numerical methods that use this connection to realize free boundary minimal surfaces. 
Namely, on a compact surface, $\Sigma$, with genus $\gamma$ and  $b$ boundary components, 
we  maximize $\sigma_j(\Sigma,g) \ L(\partial \Sigma, g)$ over a class of smooth metrics, $g$, 
where $\sigma_j(\Sigma,g)$  is the $j$-th nonzero Steklov eigenvalue and 
$L(\partial \Sigma, g)$ is the length of $\partial \Sigma$. 
Our numerical method involves 
(i) using conformal uniformization of multiply connected domains to avoid explicit parameterization for the class of metrics,  
(ii) accurately solving a boundary-weighted Steklov eigenvalue problem in multi-connected domains, and
(iii) developing gradient-based optimization methods for this non-smooth eigenvalue optimization problem. 
For genus $\gamma =0$ and $b=2,\dots, 9, 12, 15, 20$ boundary components, we numerically solve the extremal Steklov problem for the first eigenvalue. 
The corresponding eigenfunctions generate a free boundary minimal surface, which we display in striking images. 
For higher eigenvalues, numerical evidence suggests that the maximizers are degenerate, 
but we compute local maximizers 
 for the second and third eigenvalues with $b=2$ boundary components
 and for the third and fifth eigenvalues with $b=3$ boundary components. 
\end{abstract}

\maketitle

\section{Introduction}
Recently, A. Fraser and R. Schoen discovered a rather surprising connection between 
an extremal Steklov eigenvalue problem and 
the problem of generating free boundary minimal surfaces in the Euclidean ball \cite{Fraser_2011,Fraser_2013,Fraser_2015}. 
These findings have been further developed \cite{Fan_2014,Fraser2019,Girouard2020} and were recently reviewed in \cite{Li2019}. 
In this paper, we develop numerical methods to further investigate this connection. 
We first briefly review some of these previous results before stating the contributions of the present work.

\subsection*{The extremal Steklov eigenvalue problem}
Let  $(\Sigma,g)$ be a smooth, compact, connected Riemannian surface with nonempty boundary, $\partial \Sigma$. 
The Steklov eigenproblem on  $(\Sigma,g)$ is given by 
\begin{subequations}
\label{e:Steklov}
\begin{align}
& \Delta v =0 && \Sigma \\
& \partial_\nu v = \sigma v && \partial \Sigma,
\end{align}
\end{subequations}
where 
$\Delta  = |g|^{- \frac 1 2} \partial_i |g|^{\frac 1 2} g^{i j} \partial_j $ is the Laplace-Beltrami operator and 
$ \partial_\nu$ is the outward normal derivative. 
The Steklov spectrum is discrete and  we enumerate the eigenvalues, counting multiplicity,  in increasing order
$$
0 \ = \  \sigma_0(\Sigma,g)  \ <  \ \sigma_1(\Sigma,g) \ \leq \  \sigma_2(\Sigma,g) \ \leq \ \cdots \  \to \infty.
$$ 
The Steklov spectrum coincides with the spectrum of the Dirichlet-to-Neumann operator $\Gamma \colon H^{\frac1 2}(\partial \Sigma) \to H^{- \frac 1 2}(\partial \Sigma)$,  given by the formula $\Gamma w =  \partial_\nu ( \mathcal H w)$, where $\mathcal H w$ denotes the unique harmonic extension of $ w \in H^{\frac1 2}(\partial \Sigma)$ to $\Sigma$.
The restriction of the Steklov eigenfunctions to the boundary, 
$\{ v_j |_{\partial \Sigma} \}_{j=0}^\infty \subset C^\infty(\partial \Sigma)$, 
form a complete orthonormal basis of $L^2(\partial \Sigma)$. 
A recent survey on Steklov eigenvalues can be found in  \cite{Girouard_2017}.

Here, for fixed surface $\Sigma$ with genus $\gamma$ and $b$ boundary components, we consider the dependence of the $j$-th Steklov eigenvalues on the metric, \ie, the mapping $g\to \sigma_j(\Sigma,g)$.  
It is known that for any smooth Riemannian metric $g$, we have the following upper bound on the $j$-th Steklov eigenvalue in terms of the topological invariants  $\gamma$ and $b$, 
\begin{equation} \label{e:Isoperimetric}
\sigma_j(\Sigma, g) \  L(\partial \Sigma,g) \ \leq \ 2 \pi (\gamma + b + j-1 )
\qquad \qquad 
\forall j \in \mathbb N. 
\end{equation}
 Here, $L(\partial  \Sigma, g)$ is the length of $\partial \Sigma$ with respect to the metric $g$.
 This bound was proven
 by Weinstock  \cite{Weinstock_1954} for $j=1$, $\gamma =0$, and $b=1$; 
 by  Fraser and Schoen \cite{Fraser_2011} for $j=1$ (see also \cite{Girouard_2012}); 
 and in generality by Karpukhin \cite{Karpukhin_2017}.   
 It is then natural to pose the \emph{extremal Steklov eigenvalue problem}, 
\begin{equation}
\label{e:MaxSteklov}
\tilde \sigma_j^\star(\gamma, b)  := \sup_g \ \tilde \sigma_j(\Sigma, g), 
\qquad \qquad 
\tilde \sigma_j(\Sigma, g) := \sigma_j(\Sigma, g) \  L(\partial  \Sigma,g),
\end{equation}
where $g$ varies over the class of smooth Riemannian metrics on $\Sigma$. The existence of a smooth maximizer in \eqref{e:MaxSteklov} was established 
 in \cite[Theorem 1.1]{Fraser_2015}
for oriented surfaces of genus $0$ with $b\geq 2$ boundary components  or   a M\"obius band  
and  in \cite{Matthiesen2020} for general surfaces for the first ($j=1$) eigenvalue. 
 
\subsection*{Free boundary minimal surfaces}
 Denote the closed $n$-dimensional Euclidean unit ball by 
$\mathbb B^n := \{ x \in \mathbb R^n \colon |x| \leq 1 \}$ and the 
$(n-1)$-dimensional unit sphere by $\mathbb S^{n-1} = \partial \mathbb B^n$. 
Let $\mathcal M \subset \mathbb B^n$ be a $d$-dimensional submanifold with boundary $\partial \mathcal M = \overline{\mathcal M} \cap \mathbb S^{n-1} $. We say that $\mathcal M$ is a \emph{free boundary minimal submanifold in the unit ball} if
\begin{enumerate}
\item[(i)] $\mathcal M$ has zero mean curvature and 
\item[(ii)] $\mathcal M$ meets $\mathbb S^{n-1}$ orthogonally along $\partial \mathcal M$. 
\end{enumerate}
When $d = 2$, we call $\mathcal M$ a  \emph{free boundary minimal surface in the unit ball}  or, more simply, a \emph{free boundary minimal surface}. For a good visual aid to understanding the definition of free boundary minimal surfaces (and a peak at the results of this paper), 
we recommend the reader take a look at the free boundary minimal surfaces displayed in Figures~\ref{f:surf34} and \ref{f:surf51215}.

\subsection*{Fraser and Schoen's connection} Fraser and Schoen observed that a $d$-dimensional submanifold $\mathcal M \subset \mathbb B^n$   with boundary $\partial \mathcal M = \overline{\mathcal M} \cap \mathbb S^{n-1} $ is a  free boundary minimal surface if and only if the coordinate functions $x_i$, $i = 1, \ldots, n$ restricted to $\mathcal M$ are Steklov eigenfunctions with eigenvalue $\sigma = 1$. Furthermore, they showed the following theorem. 

\begin{thm}[\cite{Fraser_2013}]  \label{t:Fraser}
Let $\Sigma$ be a compact surface with boundary. Suppose that $g_0$ is a smooth metric on $\Sigma$ attaining the supremum in \eqref{e:MaxSteklov} for  some $j \in \mathbb N$. 
Let $U$ be the $n$-dimensional eigenspace corresponding to $\sigma_j(\Sigma,g_0)$. 
Then, there exist independent Steklov eigenfunctions $u_1, \ldots, u_n \in U$ which give a (possibly branched) conformal immersion $u = (u_1,\cdots, u_n)\colon \Sigma \to \mathbb B^n$ such that $u(\Sigma)$ is a  free boundary minimal surface in $\mathbb B^n$ and, up to rescaling of the metric,  $u$ is an isometry on $\partial \Sigma$.
\end{thm}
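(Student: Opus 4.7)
The plan is to derive the first-order optimality conditions that $g_0$ must satisfy as a maximizer of the nonsmooth functional $g \mapsto \tilde\sigma_j(\Sigma, g)$ and to read off from them both the conformal immersion property and the free boundary minimal surface structure. The argument has three stages: first-variation, extraction of a pointwise Euler--Lagrange identity, and geometric interpretation.

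First, I would fix a smooth one-parameter family $g_t = g_0 + t h + O(t^2)$, where $h$ is an arbitrary symmetric $(0,2)$-tensor, and compute the one-sided derivative of $\tilde\sigma_j(\Sigma, g_t)$ at $t=0$. Because $U = \ker(\Delta_{g_0}) \cap \{\sigma_j\text{-eigenspace}\}$ has dimension $n \geq 2$, $\sigma_j$ is only directionally differentiable: by Rellich--Kato perturbation theory combined with a Hadamard-type formula for Steklov eigenvalues, the $n$ perturbed eigenvalues near $\sigma_j(g_0)$ are, to leading order, the eigenvalues of a symmetric quadratic form $Q_h$ on $U$ whose coefficients depend linearly on $h$ through $u$, $du$, and $u|_{\partial\Sigma}$. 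Combined with the elementary variation of $L(\partial\Sigma, g_t)$, maximality at $g_0$ forces the largest eigenvalue of the combined perturbation form to be nonpositive for every admissible $h$.

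Second, I would convert this scalar sandwich condition into a pointwise tensorial identity via a convex separation argument in the spirit of Nadirashvili. This produces eigenfunctions $u_1,\ldots,u_n \in U$ and nonnegative weights $c_k$ with $\sum_k c_k > 0$ satisfying
\begin{equation*}
\sum_{k=1}^n c_k \bigl( du_k \otimes du_k - \tfrac{1}{2} |du_k|_{g_0}^2\, g_0 \bigr) = 0 \quad \text{on } \Sigma, \qquad \sum_{k=1}^n c_k\, u_k^2 \equiv \text{const} \quad \text{on } \partial \Sigma.
\end{equation*}
Absorbing $\sqrt{c_k}$ into the basis, the interior identity reads $u^* g_{\mathrm{Eucl}} = \lambda^2 g_0$ for some $\lambda \geq 0$, so $u = (u_1,\ldots,u_n)$ is a (possibly branched) weakly conformal map $\Sigma \to \mathbb R^n$. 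Each $u_k$ is harmonic by \eqref{e:Steklov}, and a harmonic weakly conformal map from a surface is a branched minimal immersion, so $u(\Sigma)$ has zero mean curvature where it is immersed.

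Third, I would verify the free boundary conditions and the isometry statement. Rescale $g_0$ by a constant so that $\sigma_j = 1$ and $\sum_k u_k^2 \equiv 1$ on $\partial\Sigma$, which places $u(\partial\Sigma) \subset \mathbb S^{n-1}$. At a boundary point, \eqref{e:Steklov} gives $\partial_\nu u = u$, while differentiating $|u|^2 = 1$ tangentially gives $u \cdot \partial_T u = 0$; hence the conormal image $du(\nu) = u$ is perpendicular to $T_{u(p)}\mathbb S^{n-1}$, which is the orthogonality of meeting. Finally, on a $2$-surface one has $|du|_{g_0}^2 = 2\lambda^2$, and splitting with respect to the orthonormal frame $\{T, \nu\}$ gives $|du|^2 = |\partial_T u|^2 + 1 = \lambda^2 + 1$, forcing $\lambda \equiv 1$ on $\partial\Sigma$, which is precisely the isometry statement.

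The principal obstacle is the second step. Because $\sigma_j$ is non-differentiable whenever the multiplicity exceeds one, the scalar maximality statement does not immediately descend to a pointwise identity; the conversion requires a careful Hahn--Banach / separating-hyperplane argument applied to the convex cone swept out by the forms $Q_h$ as $h$ varies. Moreover, one must exploit the diffeomorphism invariance of $\tilde\sigma_j$ to quotient out Lie derivatives of $g_0$ from the admissible test tensors, effectively reducing the condition to its trace-free interior part together with a boundary condition. The hypothesis that the supremum is actually attained by a smooth metric is used in full here, since without smoothness neither the Hadamard formula nor the separation argument are available in the form required.
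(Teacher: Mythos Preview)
The paper does not give its own proof of this theorem: it is stated with attribution to Fraser and Schoen \cite{Fraser_2013} and used as a black box. So there is no ``paper's proof'' to compare against in the strict sense.

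That said, the paper does reproduce pieces of the argument in the Euclidean setting, and your sketch lines up well with them. Your convex-separation step for the boundary identity $\sum_k c_k u_k^2 \equiv \text{const}$ on $\partial\Sigma$ is exactly the content of Proposition~\ref{p:UnitSphere}, proved there via Hahn--Banach applied to the convex hull of $\{u^2 : u \in E_{\sigma_j}\}$ after computing the directional derivatives in Lemmas~\ref{l:branches} and~\ref{l:quadform}. Your interior traceless-Hessian condition (isothermal coordinates) is what the paper alludes to in Remark~\ref{r:IsoCoords} as coming from the non-conformal metric variations; the paper does not carry out that part of the argument. Your third stage (orthogonal meeting from $\partial_\nu u = u$ and $|u|=1$, and $\lambda\equiv 1$ on $\partial\Sigma$ from comparing $|du|^2 = 2\lambda^2$ with $|\partial_T u|^2 + |\partial_\nu u|^2 = \lambda^2 + 1$) is correct and is not discussed in the paper at all.

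One small point: in your boundary-isometry computation you need conformality to give $|\partial_T u|^2 = |\partial_\nu u|^2 = \lambda^2$ separately, not just $|du|^2 = 2\lambda^2$; otherwise the splitting $|du|^2 = |\partial_T u|^2 + 1$ does not by itself force $\lambda = 1$. You have this implicitly (since $u$ is conformal and $\{T,\nu\}$ is $g_0$-orthonormal), but it is worth stating. Also, your two normalizations---rescaling $g_0$ so that $\sigma_j = 1$, and rescaling the $u_k$ so that $\sum u_k^2 \equiv 1$---are independent operations (one on the metric, one on the eigenfunctions), and it would be cleaner to separate them.
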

Theorem~\ref{t:Fraser} gives a method for using the solution of \eqref{e:MaxSteklov}  to compute free boundary minimal surfaces. The simplest such example is the equatorial disk, obtained as the intersection of  $\mathbb B ^3$ with any two-dimensional subspace of $\mathbb R^3$. This can be constructed from Weinstock's result that inequality in \eqref{e:Isoperimetric} with $j=1$, $\gamma =0$, and $b=1$ is attained only by the round disk, $\mathbb D$   \cite{Weinstock_1954}. In this case, for the eigenvalue $\tilde \sigma_1(0,1) = 2\pi$, we have the two-dimensional eigenspace given by $\textrm{span} \{ x, y \}$. The equatorial disk is given as the map $u\colon \mathbb D  \to \mathbb R^2$, defined by 
$u(x,y) = \begin{pmatrix} x \\ y \end{pmatrix}$.

For genus $\gamma = 0$  and  $b =2$ boundary components, the extremal metric is rotationally invariant and the corresponding free boundary minimal surface is the critical catenoid. We will discuss this example further in Section~\ref{s:CC}. For genus $\gamma = 0$  and  $b \geq 3$ boundary components, the extremal metric is not known explicitly, but it is known that the corresponding  free boundary minimal surface is embedded in $\mathbb B^3$ and star-shaped with respect to the origin \cite{Fraser_2013}. 
In \cite{Girouard2020}, the authors used homogenization methods to construct surfaces that have large first Steklov eigenvalue $\tilde \sigma_1$. 
In particular, free boundary minimal surfaces of genus $\gamma = 0$ with particular symmetries (\eg, symmetries of platonic solids) were constructed numerically. 
The authors proved that the first nonzero Steklov eigenvalue, $\sigma_1$, of these surfaces is $1$ and emphasized that it is not known whether these surfaces have extremal first eigenvalues among all surfaces with the same genus and number of boundary components. We will compare our results to these surfaces in Section~\ref{s:NumResults}.

In \cite{Fan_2014}, Fan, Tam, and  Yu extended the study of \eqref{e:MaxSteklov} to higher values of $j$ on the cylinder ($\gamma = 0$, $b=2$) among rotationally symmetric conformal metrics. 
They obtained different results for even and odd eigenvalues. 
They showed that the maximum of the $\tilde \sigma_{2j -1}$, $j\in \mathbb N$ among all rotationally symmetric conformal metrics on the cylinder is achieved by the $j$-fold covering of the critical catenoid immersed in $\mathbb R^3$.
The maximum of $\tilde \sigma_2$ is not attained. 
The maximum of the $\tilde \sigma_{2j}$ for $j \geq 2$ among all rotationally symmetric conformal metrics on the cylinder is achieved by the $j$-fold covering of the critical M\"obius band.
These results will be further discussed in Section~\ref{s:CC} and further compared to our computed surfaces  in Section~\ref{s:NumResults}.

\subsection*{Results and outline} 
In this paper, we develop computational methods for solving the extremal Steklov eigenvalue problem \eqref{e:MaxSteklov} and thus generating free boundary minimal surfaces via Theorem~\ref{t:Fraser}. 
This approach is used to realize free boundary minimal surfaces beyond the known examples of 
equatorial disks, the critical catenoid, the critical M\"obius band, and their higher coverings discussed above. 

In Section~\ref{s:Red}, we explain how the conformal uniformization of multiply connected domains can be used to significantly reduce the complexity of the general Steklov eigenproblem \eqref{e:Steklov} and extremal Steklov eigenproblem \eqref{e:MaxSteklov}. The argument relies on two ingredients:
\begin{enumerate}
\item The uniformization result that for a smooth, compact, connected, genus-zero Riemannian surface with $b$ boundary components, $(\Sigma, g)$, there exists a conformal mapping $f \colon (\Sigma,g) \to (\Omega,\rho I)$, 
where $\Omega$ is a disk with $b-1$ holes and $\rho I$ is a conformally flat metric. 
\item The composition $v\circ f$ of a function $v$ with a conformal map $f$  is harmonic if and only if $v$ is harmonic. 
\end{enumerate}

Let $D =  \{ x \in \mathbb R^2 \colon |x| \leq 1\}$ be the unit disk and 
$$
\Omega_{c,r} = D \ \setminus \  \cup_{i=1}^{b-1} D_i 
$$
be a punctured unit disk with $b-1$ holes, 
$$
D_i = D(c_i,r_i) = \{x \in \mathbb R^2 \colon |x-c_i| < r_i \} 
\qquad \qquad 
i =1, \ldots , b-1.
$$ 
This argument implies that it is sufficient to consider the family of (flat!) Steklov eigenproblems, 
\begin{subequations}
\label{e:Steklov2}
\begin{align}
\label{e:Steklov2a}
& \Delta u =0 && \Omega_{c,r} \\
\label{e:Steklov2b}
& \partial_n u = \sigma \rho u && \partial \Omega_{c,r},
\end{align}
\end{subequations}
where 
$\Delta $ is the Laplacian on $\Omega$, 
$ \partial_n$ is the outward normal derivative, and 
$\rho>0$ is a density function.  The  extremal Steklov  eigenvalue problem \eqref{e:MaxSteklov} for genus $\gamma = 0$ is transformed to 
\begin{subequations}
\label{e:MaxSteklov2}
\begin{align}
\tilde \sigma_j^\star(\gamma=0, b)  = 
\max_{c_i, \ r_i, \ \rho} \ &   \tilde \sigma_j  \\
\textrm{s.t.} \ & D_i \subset D, && i = 1,\ldots, b-1 \\
& D_i \cap D_j = \varnothing,  &&  i\neq j \\
& \rho(x) \geq 0, && x \in \partial \Omega_{c,r}. 
\end{align}
\end{subequations}
Here, 
$\tilde \sigma_j = \sigma_j L$, 
$\sigma_j$ is the $j$-th nontrivial eigenvalue satisfying \eqref{e:Steklov2}, and 
$L = \int_{\partial \Omega_{c,r}} \rho(x) \ dx$
is the total length of $\partial \Omega_{c,r}$.
The first two constraints simply state that the holes are contained in the domain and are pairwise disjoint.

In Section~\ref{s:CC}, we explicitly solve the Steklov eigenvalue problem on a rotationally symmetric annulus (\ie, $\gamma = 0$, $b=2$, $c_1 = 0$, and $\rho$ constant on each boundary component) and describe the critical catenoid and its higher coverings in detail. These Steklov eigenvalues and corresponding  free boundary minimal surfaces will be used to verify our computational methods. 

In Section~\ref{s:CompMeth}, we develop numerical methods for 
computing Steklov eigenvalues satisfying \eqref{e:Steklov2} on multiply connected domains, 
computing the solution to the optimization problem \eqref{e:MaxSteklov2},  
and the computation of  free boundary minimal surfaces from the Steklov eigenfunctions.
 In brief, we use the method of particular solutions to compute Steklov eigenvalues, 
 gradient-based interior point methods for the optimization problem, and 
 compute the mapping to a surface by minimizing a particular energy. 
These  methods build on previous computational methods for extremal eigenvalue problems on Euclidean domains, 
including 
minimizing Laplace-Dirichlet eigenvalues over Euclidean domains of fixed volume or perimeter \cite{oudet2004numerical,osting2010optimization,antunes2012numerical,osting2013minimal,osting2014minimal,antunes2017numerical,Bogosel_2017}, 
maximizing Steklov eigenvalues over two-dimensional Euclidean domains of fixed volume \cite{Akhmetgaliyev_2017,Bogosel_2017}. 
These methods have recently been extended to more general geometric settings. 
In particular, \cite{Kao_2017}  maximized Laplace-Beltrami eigenvalues over conformal classes of metrics with fixed volume and compact Riemannian surfaces of fixed genus ($\gamma = 0$, 1) and volume.

In Section~\ref{s:NumResults}, we present the results of numerous computations. 
For genus $\gamma =0$ and $b=2,\dots, 9, 12, 15, 20$ boundary components, we numerically solve the extremal Steklov problem \eqref{e:MaxSteklov2} for the first eigenvalue. 
We include figures displaying the optimal punctured disks and 
three linearly-independent eigenfunctions associated to the first eigenvalue, as well as 
tabulate the values of the obtained Steklov eigenvalues. 
We also plot the associated free boundary minimal surfaces, which are visually striking. 
Finally, in Section~\ref{s:NumResults}, we also present results for maximizing higher eigenvalues. 
Here, numerical evidence suggests that the maximizers are degenerate, 
but we compute local maximizers 
 for the second and third eigenvalues with $b=2$ boundary components
 and for the third and fifth eigenvalues with $b=3$ boundary components. 
For brevity, we were only able to report the results for selected values of $b$ and $j$; 
the results of additional computations can be found on \'E. Oudet's website \cite{EdouardWebpage}, along with gifs. 

We conclude in Section~\ref{s:Disc} with a discussion. 


\section{The Euclidean Steklov eigenproblem} 
\label{s:Red}

In Section~\ref{s:uniformization}, we explain how the conformal uniformization of multiply connected domains can be used to significantly reduce the complexity of the general Steklov eigenproblem \eqref{e:Steklov} and extremal Steklov eigenvalue problem \eqref{e:MaxSteklov} to obtain the Euclidean Steklov eigenproblem and  \eqref{e:Steklov2} and  extremal Steklov eigenvalue problem  \eqref{e:MaxSteklov2}, respectively.
In Section~\ref{s:EigDeriv}, we also compute the eigenvalue derivatives with respect to the density and shape parameters and discuss optimality conditions for the extremal Steklov eigenvalue problem  \eqref{e:MaxSteklov2}.

\subsection{Conformal uniformization of multiply-connected surfaces and the Steklov eigenproblem} \label{s:uniformization}
The uniformization theorem for compact, genus-zero Riemann surface without boundary states that such surfaces can be conformally mapped to the Riemann sphere. 
Here, we use a generalization of this result for multiply-connected surfaces; 
see \cite[Theorem 17.1b]{Henrici1986}, \cite{Gardiner_1999}, \cite{Zeng_2009}, and \cite[p.123]{Jin_2018}. 

\begin{thm}[\cite{Gardiner_1999}]
Suppose $(\Sigma, g)$ is a 
smooth, compact, connected, genus-zero Riemann surface with $b$ boundary components. 
Then $\Sigma$ can be conformally mapped to a
unit disk with $b-1$ circular holes. 
That is, there exists a punctured unit disk with $b-1$ holes, 
$\Omega_{c,r} = D \ \setminus \  \cup_{i=1}^{b-1} D_i $, 
and a conformal map 
$f \colon (\Sigma,g) \to (\Omega_{c,r},\rho I)$, 
where $\rho I$ is a conformally flat metric. 
Furthermore, two such mappings differ by a M\"obius transformation.
\end{thm}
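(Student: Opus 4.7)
The plan is to combine the planarity of open genus-zero Riemann surfaces with Koebe's classical circular-domain uniformization. First I would reduce $(\Sigma,g)$ to a (not necessarily circular) planar domain. Because $\Sigma$ has genus zero, its interior $\Sigma^\circ$ is a non-compact Riemann surface of genus zero, and any such surface admits a conformal embedding into the Riemann sphere $\mathbb{CP}^1$; this is the planarity theorem for open Riemann surfaces of genus zero. A concrete route proceeds via the Schottky double $\hat\Sigma$, a closed Riemann surface of genus $b-1$ carrying an anti-holomorphic involution $\tau$ that fixes $\partial\Sigma$ pointwise: a meromorphic function on $\hat\Sigma$ that is antiequivariant under $\tau$ (hence real-valued on $\partial\Sigma$) and injective on $\Sigma$, produced via Riemann--Roch after choosing a divisor of sufficiently high degree, descends to a conformal embedding of $\Sigma$ onto a bounded planar domain $\Omega'$ whose boundary consists of $b$ disjoint smooth Jordan curves.

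Next, I would invoke Koebe's theorem: every finitely-connected planar domain is conformally equivalent to a circular domain, i.e., a domain whose boundary components are all round circles (possibly degenerated to points, but this degeneration is excluded here by smoothness of $\partial\Sigma$). Composing with the embedding from the previous step produces a conformal map from $\Sigma$ onto a domain bounded by exactly $b$ circles, and a further M\"obius transformation sending the outer boundary to $\partial D$ and placing the remaining $b-1$ circles inside $D$ yields the target $\Omega_{c,r}$. The conformal factor $\rho$ in the flat metric $\rho I$ arises automatically as the push-forward of $g$ expressed in the Euclidean background.

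For uniqueness, let $f_1, f_2$ be two such maps and set $\varphi := f_2 \circ f_1^{-1}$, a conformal equivalence between two circular domains of the claimed form. Applying Schwarz reflection simultaneously across each of the $b$ boundary circles extends $\varphi$ across those circles, and iterating the reflections builds an extension of $\varphi$ to $\mathbb{CP}^1 \setminus \Lambda$, where $\Lambda$ is the (measure-zero) limit set of the Kleinian group generated by the $b$ reflections. The extended map is bounded, so Riemann's removable-singularity theorem extends it holomorphically across $\Lambda$ to a conformal automorphism of $\mathbb{CP}^1$, which is necessarily a M\"obius transformation. The main obstacle in this plan is Koebe's circular-domain theorem itself, whose standard proof is a delicate iterative rounding procedure requiring careful normal-family estimates; I would take it as a classical input rather than reproduce it. The Kleinian reflection argument in the uniqueness step is also subtle but is standard in the theory of Schottky groups.
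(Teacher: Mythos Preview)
The paper does not prove this theorem: it is stated as a classical uniformization result and attributed to the references \cite{Henrici1986,Gardiner_1999,Zeng_2009,Jin_2018}, with no argument supplied. So there is no ``paper's own proof'' to compare against; your task was really to outline the classical proof, and your plan does exactly that.

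Your outline---planarity of genus-zero open Riemann surfaces, followed by Koebe's circular-domain theorem, followed by the Schwarz-reflection/Schottky-group argument for uniqueness---is the standard route and is correct in its broad strokes. Two small comments. First, the Schottky-double detour you offer as a ``concrete route'' to planarity is more fragile than the direct statement: Riemann--Roch on the genus-$(b-1)$ double gives you plenty of meromorphic functions, but not an \emph{injective} one on $\Sigma$ without additional work; it is cleaner simply to invoke the planarity theorem (every open Riemann surface on which each simple closed curve separates embeds conformally in $\mathbb{CP}^1$), which applies directly since a genus-zero surface with boundary is planar in this sense. Second, in the uniqueness step the extended map is a conformal automorphism of $\mathbb{CP}^1\setminus\Lambda$ taking values in $\mathbb{CP}^1$, so ``bounded'' is not quite the right word; the point is rather that $\Lambda$ has zero area (classical for Schottky reflection groups generated by disjoint circles), so the extension across $\Lambda$ follows from the removable-singularity theorem on the sphere. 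With those adjustments your sketch is a faithful summary of the classical argument the paper is citing.
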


\begin{rem} \label{r:dim}
The uniqueness of the conformal map up to a M\"obius transformation means that it is possible to 
center one of the holes at the origin and center another hole on the positive $x$-axis. 
Thus, fixing these three parameters, the dimension of the parameter space of hole centers and radii
$\{c_i\}_{i=1}^{b-1} \cup  \{r_i\}_{i=1}^{b-1}$, 
is 1 for $b=2$ and $3b-6$ for $b\geq 3$, which is the dimension of the conformal module. 
\end{rem}

We now sketch a brief derivation of  \eqref{e:Steklov2} from \eqref{e:Steklov}. 
Let $f \colon (\Sigma,g) \to (\Omega_{c,r}, \rho I) $ be a conformal mapping.  
It is well-known that $v = u \circ f \colon \Sigma \to \mathbb R$ is harmonic if and only if  $ u \colon \Omega_{c,r} \to \mathbb R$ is harmonic \cite{olver2017complex}. This justifies  \eqref{e:Steklov2a}. 
We show \eqref{e:Steklov2b} on a flat domain for simplicity. Write $x = f(z)$ and  $v(z) = u \left( f(z) \right) = u(x)$, so that 
$\nabla_z v(z) = Df(z)^T \ \nabla_x u \left( f (z) \right)$. 
Since $Df(z) \ \nu(z) = | Df (z) | \ n  \left( f (z) \right)$, we have that 
\begin{align*}
\sigma u  \left( f (z) \right) &= \sigma v(z) \\
&= \nu^T(z) \ \nabla_z v(z) \\
&= \nu^T (z)  \ Df(z)^T \  \nabla_x v \left( f (z) \right) \\
&=  | Df (z) | \  n^T  \left( f (z) \right) \  \nabla_x u \left( f (z) \right) \\
&=  | Df (z) | \  \partial_n u \left( f (z) \right) 
\end{align*}
So, we obtain $\partial_n u (x) = \sigma \rho(x) u(x)$, where $\rho(x) = | Df \left( f^{-1}(x) \right) |^{-1} = |D h (x) |$, where $h = f^{-1}$. 

\bigskip

Remark~\ref{r:dim} shows that our parameterization of  $\Omega_{c,r}$ is over-complete, as the following example further demonstrates. 

\begin{ex} \label{e:ex2}
Denote $\Omega_{1}$ as an eccentric annulus with boundaries
\[
\left|z-c_{1}\right|<r_{1}\quad\text{and}\quad\left|z\right|<1
\]
and $\Omega_{2}$ as an concentric annulus $r_{2}<|x|<1$ where $c_{1}$,
$r_{1}$, $r_{2}$ are real numbers and $x,z\in\mathbb{C}$. 
A conformal mapping $h:\Omega_{1}\rightarrow\Omega_{2}$ is given by
\[x=f(z)=\frac{z-a}{1-az}\]
where $a$ and $r_{2}$ are determined by mapping $c_1+r_1, c_1-r_1$ to $r_2,-r_2$ and satisfy 
\begin{align*}
a=\frac{1+c_{1}^{2}-r_{1}^{2}-\sqrt{\left(1+c_{1}^{2}-r_{1}^{2}\right)^{2}-4c_{1}^{2}}}{2c_{1}}, 
\quad \textrm{and} \quad
r_{2}  =\frac{r_1+c_1-a}{1-a(r_1+c_1)}.
\end{align*}
In this example, $z=h(x)=\frac{x+a}{1+ax}$,
and 
\[
\rho(x)=|z_{x}|=\left|\frac{1-a^{2}}{\left(1+ax\right)^{2}}\right|.
\]
In Figure~\ref{f:conformal_mapping}, the mapping is shown for $c_{1}=r_{1}=\frac{1}{4}$ and
the resulting $a=r_{2}=2-\sqrt{3}$. 

Thus, the eccentric annulus $\Omega_1$ with boundary density $\rho = 1$ has the same Steklov spectrum as the concentric annulus $\Omega_2$ with boundary density $\rho(x)$ given above. 
In particular, this example shows that the decomposition of perturbations of a metric  into conformal and non-conformal directions is not equivalent to either changing $(c_1,r_1)$ or $\rho$, respectively. 
While changing $\rho$  is a conformal perturbation, a change in $(c_1,r_1)$ gives a perturbation to the metric that has components in both the conformal and non-conformal directions. 
\end{ex}

\begin{figure}
    \centering
    \includegraphics[height=0.3\textwidth]{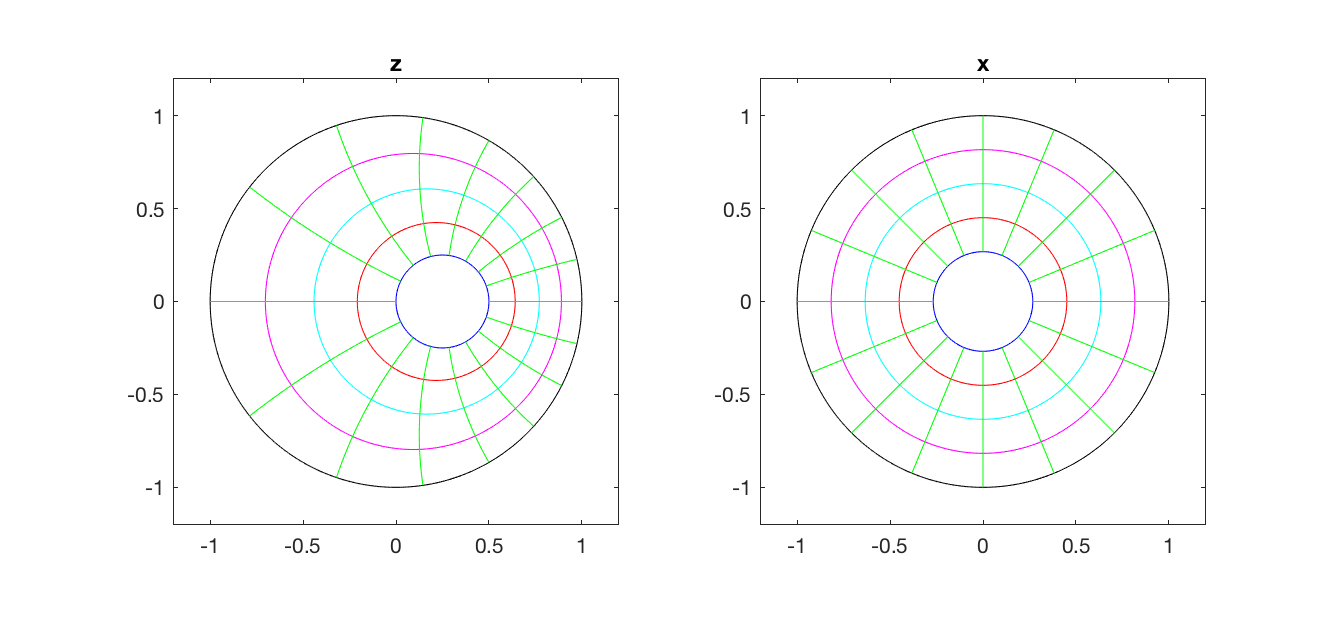}
    \caption{A conformal mapping from an eccentric annulus to a concentric annulus. See Example~\ref{e:ex2}.}
    \label{f:conformal_mapping}
\end{figure}

The following two examples illustrate what happens to the boundary density $\rho$ when $\Sigma$ becomes ``pinched''. 

\begin{ex} \label{e:Hippopede}
We consider the conformal mapping $h\colon D \to \Omega_\alpha$ from the unit
disk $|x|\le1$ to the Hippopede domain, $\Omega_\alpha$,
\[
h(x)=\frac{2\alpha x}{1+\alpha+(1-\alpha)x^{2}};
\]
see  \cite{girouard2016steklov, alhejaili2019maximal}.  When $\alpha=1$, this is the identity mapping on the unit disk and  as 
$\alpha \rightarrow 0^{+}$, 
it maps a unit disk to two ``kissing'' disks. 
In the left and center panels of Figure \ref{f:Hippopede}, the mapping is shown for $\alpha=\frac{1}{10}$. 
Here, we compute, 
\[
\rho_\alpha(x)=\left|\frac{2\alpha\left(1+\alpha-(1-\alpha)x^{2}\right)}{\left(1+\alpha+(1-\alpha)x^{2}\right)^{2}}\right|.
\]
Let $x=e^{i\theta}$. In the right panel of Figure \ref{f:Hippopede}, 
we plot $\rho_\alpha (\theta)$ for $\alpha = \frac{1}{50}$, $ \frac{1}{10}$, and $\frac{1}{5}$.
We observe that $\rho_\alpha(\theta)$ becomes singular as $\alpha\rightarrow0^{+}$
at $\theta=\frac{\pi}{2}$ and $\frac{3\pi}{2}$. 
\end{ex}

\begin{figure}
    \centering
    \includegraphics[height=0.3\textwidth]{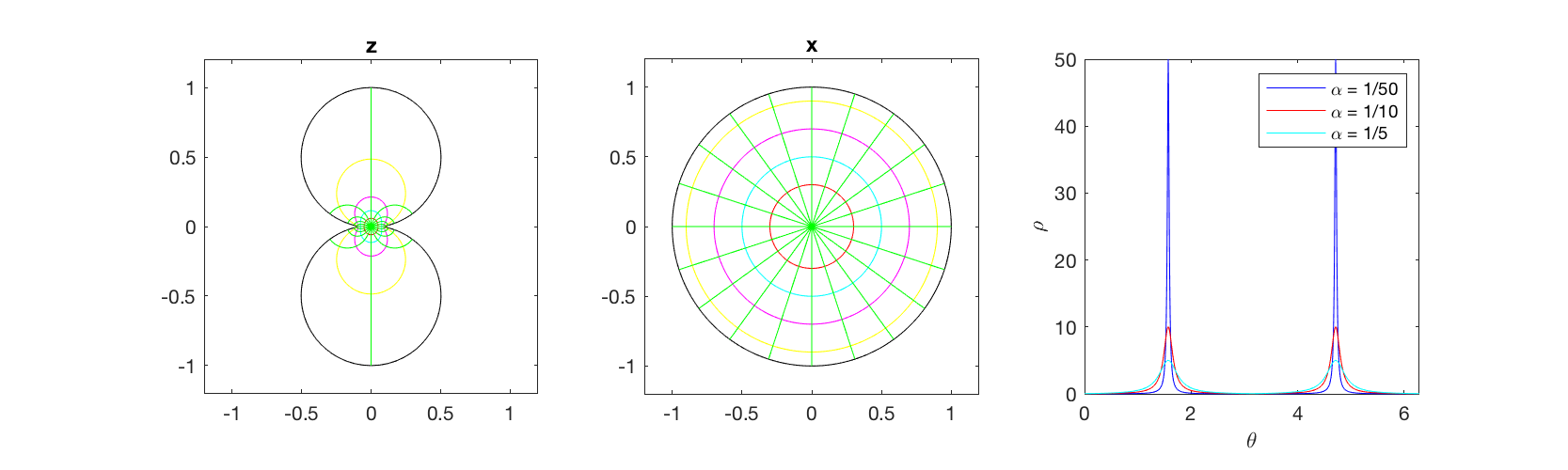}
    \caption{A conformal mapping from a Hippopede shape to a unit disk. See Example~\ref{e:Hippopede}.}
    \label{f:Hippopede}
\end{figure}

In Example~\ref{e:Hippopede}, the density is singular at two points. The following example illustrates how the density function can become singular at a single point. 

\begin{ex} \label{e:ex3}
We consider a radius $r_{1} :=0.833$ disk, $\Omega_{1}=\{|x|< r_1\}$, (see Figure~\ref{f:critical_caternoid_glue_disk_to_a_disk}(c)) 
and a domain $\Omega_{2}$ consisting of the union of two disks with radii $r_1$ and $1$ and
a `neck' of width $2\alpha$  (see Figure~\ref{f:critical_caternoid_glue_disk_to_a_disk}(a)). 
Define the conformal mapping
$h:\Omega_{1}\rightarrow\Omega_{2}$  
as the composition of the two functions $h=h_{1}\circ h_{2}$,  where 
\begin{align*} 
z := h_{1}(y)  =\frac{y-ic}{ay^{2}+b}+i\alpha c, 
\qquad \textrm{and} \qquad 
y: = h_{2}(x)  =\frac{\frac{x}{r_{1}}-i(1-\beta)}{1+i(1-\beta)\frac{x}{r_{1}}}.
\end{align*}
The constants $a,b,c$ are chosen as 
\[
a=\frac{1}{2}(\frac{1}{\alpha}-\frac{1}{r_{1}+1}), \qquad 
b=\frac{1}{\alpha}-a, \qquad 
c=\frac{\alpha+4a\alpha-2}{2a\alpha^{2},}
\]
so that $h_{1}$ maps $1,i,-1,-i$ to $\alpha,2i,-\alpha,-2r_{1}i$, respectively. See Figure~\ref{f:critical_caternoid_glue_disk_to_a_disk}(a) and (b). 
The constant $\beta$ is chosen so that $\Omega_{1}$ maps to a unit
disk and the zero in $\Omega_{1}$ maps to $-i(1-\beta).$ See Figure~\ref{f:critical_caternoid_glue_disk_to_a_disk}(b) and (c).When $\beta$
is small, this function maps points which are uniformly distributed
on $\partial\Omega_{1}$ to points that  accumulate near $-i$ on the unit disk. The boundary density, $\rho$, 
can be obtained via the product rule, 
\[
\rho(x)=|h_{x}|=|h_{1}^{'}(h_{2}(x))h_{2}^{'}(x)|,\quad\text{for}\quad|x|=r_{1}.
\]
As shown in Figures~\ref{f:critical_caternoid_glue_disk_to_a_disk}(d), the density reaches a large value at $\theta=\frac{\pi}{2}$. 
Figure \ref{f:critical_caternoid_glue_disk_to_a_disk}(e) shows the detail profile of the density function about one. 
\end{ex}

\begin{figure}
    \centering
    \includegraphics[height=0.4\textwidth]{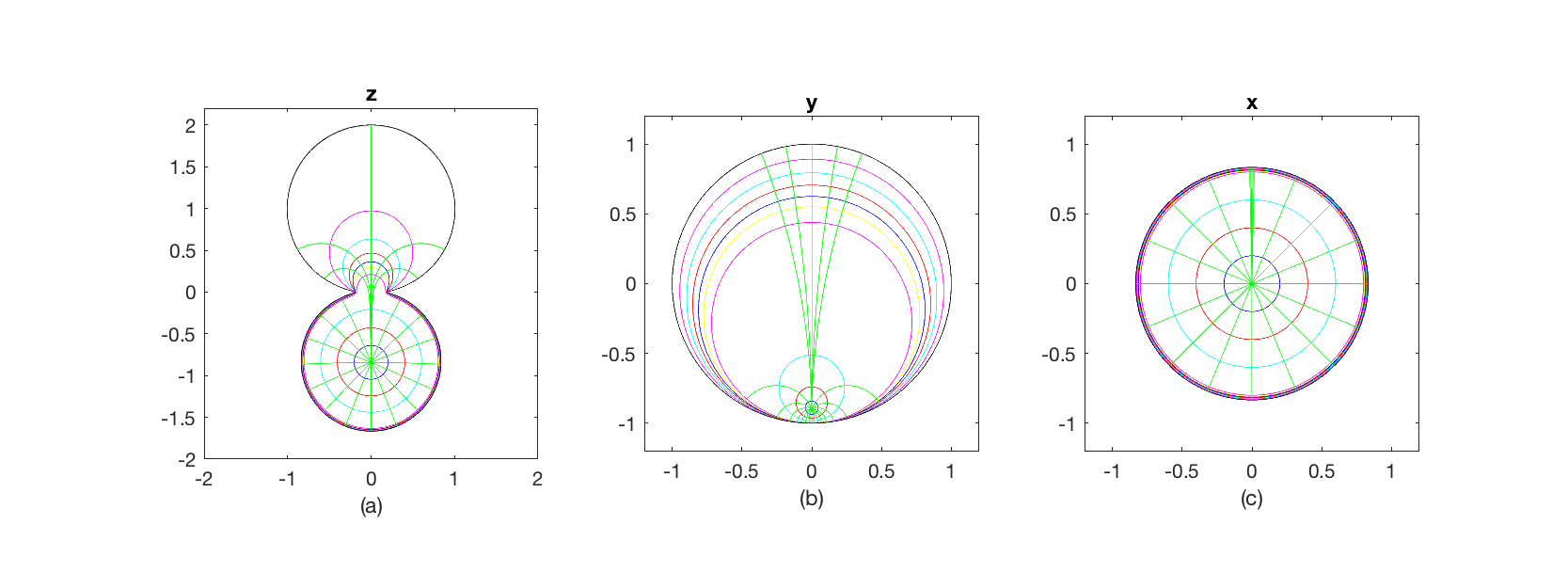}
    \includegraphics[height=0.3\textwidth]{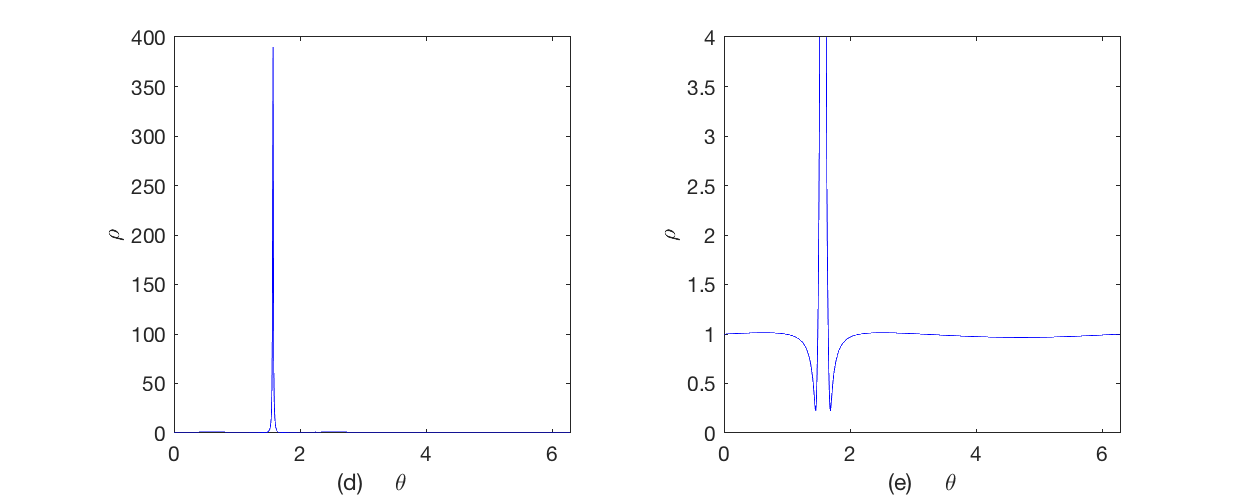}
    \caption{A conformal mapping from a disk to a shape which is close to the union of two disks. The choice of parameters are $\alpha=0.2$ and $\beta=0.1$. See Example~\ref{e:ex3}. }
    \label{f:critical_caternoid_glue_disk_to_a_disk}
\end{figure}

\subsection{Eigenvalue derivatives with respect to the density and shape parameters}
\label{s:EigDeriv}

In this section, we consider $\sigma$ and $ \tilde \sigma = \sigma L$ as a function of $\rho$ and the shape $\Omega_{c,r}$.
We first compute the derivatives with respect to $\rho$. 
\begin{prop} 
Let $(\sigma, u)$ be a  simple Steklov eigenpair, satisfying \eqref{e:Steklov2}, normalized so that $\int_{\partial \Omega_{c,r}} \rho u^2 = 1$.
Then the functionals $\rho \mapsto \sigma$ and $\rho \mapsto \tilde \sigma$ are Frech\'et differentiable with derivatives 
\begin{subequations}
\label{e:Drho}
\begin{align}
\langle \frac{\delta \sigma}{\delta \rho} , \delta \rho \rangle &= - \sigma \int_{\partial \Omega_{c,r}} u^2(x)  \delta \rho(x)  \ dx, \\
\langle \frac{\delta \tilde \sigma }{\delta \rho} , \delta \rho \rangle &= \sigma \int_{\partial \Omega_{c,r}} \left(1 - L u^2(x)\right)  \delta \rho(x)  \ dx . 
\end{align}
\end{subequations}
\end{prop}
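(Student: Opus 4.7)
The plan is to use standard analytic perturbation theory together with the weak formulation of \eqref{e:Steklov2}. Throughout, I work with the bilinear forms
\[
a(u,\varphi) = \int_{\Omega_{c,r}} \nabla u \cdot \nabla \varphi \, dx,
\qquad
m_\rho(u,\varphi) = \int_{\partial \Omega_{c,r}} \rho \, u \, \varphi \, ds,
\]
so that the eigenvalue problem becomes $a(u,\varphi) = \sigma \, m_\rho(u,\varphi)$ for all $\varphi \in H^1(\Omega_{c,r})$, and the normalization condition is $m_\rho(u,u)=1$.

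First I would set $\rho_t = \rho + t\, \delta\rho$ for small $t$, and invoke the Kato--Rellich theory of analytic perturbations of self-adjoint operators: because $\sigma$ is a simple eigenvalue, there exist smooth (indeed real-analytic) one-parameter families $t \mapsto \sigma_t$ and $t \mapsto u_t$ with $(\sigma_0,u_0) = (\sigma,u)$, $m_{\rho_t}(u_t,u_t)=1$, and $a(u_t,\varphi) = \sigma_t m_{\rho_t}(u_t,\varphi)$ for every $\varphi \in H^1(\Omega_{c,r})$. Writing $\dot{u} = \partial_t u_t|_{t=0}$ and $\dot\sigma = \partial_t \sigma_t|_{t=0}$, I differentiate both identities at $t=0$ to obtain
\begin{align*}
a(\dot u, \varphi) &= \dot\sigma \, m_\rho(u,\varphi) + \sigma \int_{\partial \Omega_{c,r}} \delta\rho \, u \, \varphi \, ds + \sigma \, m_\rho(\dot u, \varphi),\\
2\, m_\rho(u,\dot u) &= -\int_{\partial \Omega_{c,r}} \delta\rho \, u^2 \, ds.
\end{align*}

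The key trick is to choose $\varphi = u$ in the first relation; by symmetry of $a$ and $m_\rho$ and the original eigenvalue equation, $a(\dot u, u) = a(u, \dot u) = \sigma \, m_\rho(u,\dot u) = \sigma\, m_\rho(\dot u, u)$, so the terms involving $\dot u$ cancel and I am left with
\[
0 = \dot\sigma \, m_\rho(u,u) + \sigma \int_{\partial \Omega_{c,r}} \delta\rho \, u^2 \, ds = \dot\sigma + \sigma \int_{\partial \Omega_{c,r}} u^2 \, \delta\rho \, ds,
\]
which gives the first identity in \eqref{e:Drho}. For the second, I differentiate $\tilde\sigma = \sigma L$ with $L = \int_{\partial \Omega_{c,r}} \rho \, ds$ by the product rule, noting $\dot L = \int_{\partial \Omega_{c,r}} \delta\rho \, ds$, and combine with the first identity to recover the stated expression for $\delta \tilde\sigma/\delta \rho$.

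The one genuine subtlety — and the step I would treat most carefully — is justifying the smooth dependence of $(\sigma_t, u_t)$ on $t$. This rests on simplicity of $\sigma$ and on the fact that the perturbation $\rho \mapsto m_\rho$ is a bounded symmetric bilinear form on $H^1(\Omega_{c,r})$ depending linearly (hence analytically) on $\rho$, with the trace map $H^1(\Omega_{c,r}) \to L^2(\partial \Omega_{c,r})$ compact; these are precisely the hypotheses needed for Kato's analytic perturbation theorem to apply to the generalized eigenvalue problem, yielding Fréchet differentiability of $\rho \mapsto \sigma$ with the derivative identified above. The rest of the computation is routine.
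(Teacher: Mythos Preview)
Your proof is correct and follows essentially the same route as the paper's: differentiate the eigenvalue relation and the normalization condition, then combine. The paper differentiates the Rayleigh-quotient identity $\sigma = \int_{\Omega_{c,r}} |\nabla u|^2\,dx$ and uses Green's identity to reach $\dot\sigma = 2\sigma\int_{\partial\Omega_{c,r}}\rho u \dot u\,dx$, whereas you differentiate the weak formulation and test with $\varphi = u$; these are the same computation in different clothing, and the use of the differentiated normalization and the product rule for $\tilde\sigma$ is identical. Your explicit appeal to Kato--Rellich for the smooth dependence of $(\sigma_t,u_t)$ on $t$ is a point the paper leaves implicit.
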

\begin{proof}
We take variations of the formula $\sigma = \int_{\Omega_{c,r}} | \nabla u |^2 \ dx$ and use Green's identity to obtain 
\begin{align*}
\dot \sigma &= 2 \int_{\Omega_{c,r}} \nabla u \cdot \nabla \dot u \ dx \\
&= -  2 \int_{\Omega_{c,r}}  \dot u \Delta u \ dx + 2 \int_{\partial \Omega_{c,r}} \dot u u_n  \ dx \\
&= 2\sigma  \int_{\partial \Omega_{c,r}} \rho u \dot u  \ dx. 
\end{align*}
From the normalization condition, $\int_{\partial \Omega_{c,r}} \rho u^2 \ dx = 1$, we obtain 
$$
\int_{\partial \Omega_{c,r}} \dot \rho u^2 \ dx = - 2 \int_{\partial \Omega_{c,r}}  \rho u \dot u \ dx,
$$
which gives the desired result. The derivative of $\tilde \sigma$ is obtained via $L = \int_{\partial \Omega_{c,r}} \rho \ dx$ and the product rule. 
\end{proof}

We describe below optimality conditions when the multiplicity of the optimized eigenvalue 
is greater than one. Our formulation is highly inspired by previous articles 
\cite{ElSoufi_2007,Fraser_2015,Bogosel_2016}. We first need the following 
regularity result; see \cite[Theorem 3.2]{Lamberti_2015}. 

\begin{lem} \label{l:branches}
    Let $\sigma(\rho)$ be an eigenvalue of multiplicity $p>1$ of
     system \eqref{e:Steklov2} associated to a smooth domain $\Omega$ with 
     nonnegative boundary density $\rho$. Let 
     $\delta \rho \in L^2(\partial \Omega)$ and consider the eigenvalues 
     associated to the densities $\rho_\varepsilon = \rho + \varepsilon \delta \rho$ for $\varepsilon 
     \in \mathbb R$.  There exists 
     $\varepsilon_0> 0$ and nontrivial  functions
      $(\sigma_i(\varepsilon))_{1 \leq i \leq p}$ and  
      $(u_i(\varepsilon))_{1 \leq i \leq p}$  analytic on $(- \varepsilon_0, \varepsilon_0)$
      such that for all $i = 1,\dots, p$:
      \begin{itemize}
        \item[(a)] $\sigma_i(0) = \sigma(\rho)$,
        \item[(b)] The family $\{u_1(\varepsilon), \dots, u_p(\varepsilon) \}$ is 
        orthonormal in $L^2(\partial \Omega, \rho_\varepsilon )$, 
        \item[(c)] Every couple $(\sigma_i(\varepsilon), u_i(\varepsilon))$ is
        solution of system  \eqref{e:Steklov2} for the density $\rho_\varepsilon$.
      \end{itemize}
\end{lem}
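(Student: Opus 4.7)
The plan is to invoke the Rellich--Kato theory of analytic perturbations for self-adjoint compact operators. I will first recast the weighted Steklov problem \eqref{e:Steklov2} as an eigenvalue problem for a compact self-adjoint operator that depends analytically on $\varepsilon$, and then quote the standard result that the eigenvalues and eigenvectors of such a family admit analytic branches through $\varepsilon = 0$, even at a degenerate eigenvalue.

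Concretely, consider the bilinear forms
$$
a(u,v) = \int_{\Omega}\nabla u \cdot \nabla v\,dx, \qquad b_\varepsilon(u,v) = \int_{\partial\Omega}\rho_\varepsilon\, uv\,dx,
$$
viewed on $H^1(\Omega)$. Since $\rho_\varepsilon = \rho + \varepsilon\,\delta\rho$ is affine in $\varepsilon$, the family $\{b_\varepsilon\}$ is a real-analytic family of bounded forms (of type (B) in Kato's terminology). Quotienting by constants and inverting $a$ against $b_\varepsilon$ yields an analytic family of compact self-adjoint operators $T_\varepsilon$ whose nonzero eigenvalues are the reciprocals of the Steklov eigenvalues associated to $\rho_\varepsilon$. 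By the standard continuity of spectral projections, for $|\varepsilon|$ sufficiently small the spectrum of $T_\varepsilon$ has exactly $p$ eigenvalues (counted with multiplicity) in a small neighborhood of $1/\sigma(\rho)$, and the associated total spectral projection $P_\varepsilon$ is analytic in $\varepsilon$.

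Kato's theorem on analytic perturbations of self-adjoint operators then furnishes real-analytic branches $\varepsilon \mapsto \sigma_i(\varepsilon)$ and $\varepsilon \mapsto u_i(\varepsilon)$ on some interval $(-\varepsilon_0, \varepsilon_0)$, with $\sigma_i(0) = \sigma(\rho)$ and $(\sigma_i(\varepsilon), u_i(\varepsilon))$ a solution of \eqref{e:Steklov2} for density $\rho_\varepsilon$; this gives (a) and (c). For (b), I would apply an analytic Gram--Schmidt procedure to the $u_i(\varepsilon)$'s with respect to the inner product $b_\varepsilon$. Since $b_\varepsilon$ itself depends analytically on $\varepsilon$ and restricts to a positive-definite form on the $p$-dimensional range of $P_\varepsilon$ for small $|\varepsilon|$, the resulting orthonormal basis remains real-analytic after possibly shrinking $\varepsilon_0$.

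The main obstacle will be the case when $\rho$ vanishes on part of $\partial\Omega$: there the natural Hilbert space $L^2(\partial\Omega,\rho_\varepsilon)$ varies discontinuously in $\varepsilon$ as a subspace of $L^2(\partial\Omega)$. The fix is to work in the fixed ambient space $L^2(\partial\Omega)$ with the possibly degenerate form $b_\varepsilon$ and invoke the form version of Kato's theory, which accommodates nonnegative forms whose kernel may change with $\varepsilon$. One would then verify that for small $|\varepsilon|$ the restriction of $b_\varepsilon$ to the fixed finite-dimensional range of $P_\varepsilon$ is uniformly nondegenerate, so that the orthonormalization argument still applies. Carrying this technical reduction out carefully is the step requiring the most attention; the rest is a direct appeal to a classical theorem.
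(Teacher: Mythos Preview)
The paper does not give its own proof of this lemma; it simply cites \cite[Theorem 3.2]{Lamberti_2015} for the result. Your outline via Rellich--Kato analytic perturbation theory for a self-adjoint holomorphic family is precisely the standard route to such statements (and is, in substance, what the cited reference does for the Steklov problem), so your approach is correct and aligned with what the paper defers to. The technical worry you raise about $\rho$ vanishing on part of $\partial\Omega$ is legitimate but peripheral here: in the context of the paper the densities are taken to be strictly positive on $\partial\Omega$, so the forms $b_\varepsilon$ are genuine inner products for small $|\varepsilon|$ and the Gram--Schmidt step goes through without the extra care you describe.
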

We can now evaluate directional derivatives based on previous parametrizations:
\begin{lem} \label{l:quadform}
    Let $\sigma$ be an eigenvalue of multiplicity $p> 1$ of the weighted Steklov 
    system \eqref{e:Steklov2} for some nonnegative boundary density $\rho$. Denote 
    by $E_\sigma$ the corresponding eigenspace. Let 
    $\rho_\varepsilon = \rho + \varepsilon \delta \rho$ be a perturbation of $\rho$ for 
    some $\delta \rho \in L^2(\partial \Omega)$ .
     Let $(\sigma_i(\varepsilon))_{1 \leq i \leq p}$ and
     $( u_i(\varepsilon))_{1 \leq i\leq p}$ be some smooth parametrizations as the ones 
     given by Lemma \ref{l:branches}. Then $\sigma_i' = \frac{d}{d\varepsilon} \sigma_{i}(\varepsilon)|_{\varepsilon = 0}$ 
     are the eigenvalues of the quadratic form $q_{\delta \rho}$ defined on $E_\sigma \subset L^2(\partial \Omega, \rho)$ by
  
  \[ q_{\delta \rho}( u) = - \sigma \int_{\partial \Omega}   u^2 \delta \rho  \ d x.\]
  Moreover, the $L^2(\partial \Omega, \rho)$-orthonormal basis $ u_1(0),..., u_p(0)$ diagonalizes $q_{\delta \rho}$ on $E_\sigma$.

\end{lem}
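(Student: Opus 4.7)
The plan is to reduce this to the simple-eigenvalue computation already carried out in the preceding proposition, applied branch by branch to the analytic families produced by Lemma \ref{l:branches}, and then to check that the analytic basis $\{u_i(0)\}$ is exactly a $q_{\delta\rho}$-diagonalizing basis of $E_\sigma$.

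First, fix $i \in \{1,\dots,p\}$ and apply the differentiation argument from the previous proposition to the smooth branch $(\sigma_i(\varepsilon), u_i(\varepsilon))$. Write $\dot u_i = \frac{d}{d\varepsilon} u_i(\varepsilon)|_{\varepsilon=0}$ and $u_i := u_i(0)$. Differentiating $\sigma_i(\varepsilon) = \int_\Omega |\nabla u_i(\varepsilon)|^2\,dx$ and using Green's identity together with $\partial_n u_i = \sigma\rho u_i$ yields $\sigma_i'(0) = 2\sigma \int_{\partial\Omega}\rho u_i\dot u_i\,dx$. Differentiating the normalization $\int_{\partial\Omega}\rho_\varepsilon u_i(\varepsilon)^2\,dx = 1$ at $\varepsilon=0$ gives $2\int_{\partial\Omega}\rho u_i \dot u_i\,dx = -\int_{\partial\Omega}\delta\rho\, u_i^2\,dx$. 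Combining these yields
\[
\sigma_i'(0) \;=\; -\sigma\int_{\partial\Omega} u_i^2\,\delta\rho\,dx \;=\; q_{\delta\rho}(u_i),
\]
which shows each $\sigma_i'(0)$ is the diagonal value of $q_{\delta\rho}$ on the basis element $u_i$.

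Next, I would show the off-diagonal entries vanish, i.e.\ $q_{\delta\rho}(u_i,u_j) := -\sigma \int_{\partial\Omega} u_i u_j\,\delta\rho\,dx = 0$ whenever $i\neq j$. The key is to exploit two differentiable identities that the branches satisfy simultaneously. On one hand, differentiating the $L^2(\partial\Omega,\rho_\varepsilon)$-orthonormality $\int_{\partial\Omega}\rho_\varepsilon u_i(\varepsilon) u_j(\varepsilon)\,dx = \delta_{ij}$ at $\varepsilon=0$ gives, for $i\neq j$,
\[
\int_{\partial\Omega}\delta\rho\, u_i u_j\,dx \;+\; \int_{\partial\Omega}\rho\bigl(\dot u_i u_j + u_i\dot u_j\bigr)\,dx \;=\; 0.
\]
On the other hand, Green's identity applied to $u_i(\varepsilon)$ and $u_j(\varepsilon)$ (both harmonic) together with the Steklov boundary conditions gives $\int_\Omega \nabla u_i(\varepsilon)\cdot\nabla u_j(\varepsilon)\,dx = \sigma_j(\varepsilon)\delta_{ij}$. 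Differentiating at $\varepsilon=0$ and, for the $i\neq j$ case, applying Green's identity to the derivative terms (using $\Delta u_i = \Delta u_j = 0$ and $\partial_n u_i = \sigma\rho u_i$, $\partial_n u_j = \sigma \rho u_j$) produces
\[
\sigma\int_{\partial\Omega}\rho\bigl(\dot u_i u_j + u_i\dot u_j\bigr)\,dx \;=\; 0.
\]
Substituting this into the differentiated orthonormality identity gives $\int_{\partial\Omega}\delta\rho\,u_i u_j\,dx = 0$ for $i\neq j$, which is exactly the vanishing of the off-diagonal entries of $q_{\delta\rho}$ in the basis $\{u_i\}$.

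Putting these pieces together, the matrix of $q_{\delta\rho}$ in the $L^2(\partial\Omega,\rho)$-orthonormal basis $u_1,\dots,u_p$ of $E_\sigma$ is diagonal with entries $\sigma_1'(0),\dots,\sigma_p'(0)$; since this basis is orthonormal for the quadratic form's ambient inner product, the $\sigma_i'(0)$ are precisely the eigenvalues of $q_{\delta\rho}$ on $E_\sigma$. I expect the main (mild) obstacle to be the off-diagonal cancellation: one needs both the normalization identity and the Green-identity identity for $u_i(\varepsilon)$ against $u_j(\varepsilon)$, and a careful check that $\dot u_i$ can indeed be differentiated under the integral sign, which is warranted by the analyticity statement of Lemma \ref{l:branches}.
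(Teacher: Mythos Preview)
Your proof is correct and follows essentially the same approach as the paper: differentiate the eigenvalue equation, apply Green's identity, and use the $L^2(\partial\Omega,\rho_\varepsilon)$-orthonormality of the branches. The only difference is organizational: the paper differentiates the weak form $\int_\Omega \nabla u_i(\varepsilon)\cdot\nabla v = \sigma_i(\varepsilon)\int_{\partial\Omega}\rho_\varepsilon u_i(\varepsilon)v$ tested against $v=u_j(0)$, which yields the identity $\sigma_i'\,\delta_{ij} = -\sigma\int_{\partial\Omega}u_i(0)u_j(0)\,\delta\rho$ for all $i,j$ in a single stroke, whereas you treat the diagonal and off-diagonal entries separately.
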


\begin{proof} Let   $(\sigma_i(\varepsilon))_{1 \leq i \leq p}$ and  
    $(u_i(\varepsilon))_{1 \leq i \leq p}$  defined on $(- \varepsilon_0, \varepsilon_0)$
    for some $\varepsilon_0 >0$ satisfying properties of Lemma \ref{l:branches}.
    For all  $\varepsilon \in (- \varepsilon_0, \varepsilon_0)$, $i = 1,\dots, p$ and 
    $ v \in  L^2(\partial \Omega, \rho)$,  we have from \eqref{e:Steklov2}, that 
    \begin{equation} \label{eqvarv}
        \int_\Omega \nabla u_i(\varepsilon) \cdot \nabla v \ dx = \sigma_i(\varepsilon)
        \int_{\partial \Omega} u_i(\varepsilon) v \rho_\varepsilon \ dx.
    \end{equation} 
    Differentiationg this equality with respect to $\varepsilon$ and evaluating at
    $\varepsilon = 0$ gives
    $$ \int_\Omega \nabla u_i'(0) \cdot \nabla v \ dx = 
    \sigma \int_{\partial \Omega} u_i(0) v \delta \rho \ dx +
    \sigma \int_{\partial \Omega} u_i'(0) v \rho \ dx +
    \sigma_i' \int_{\partial \Omega} u_i(0) v \rho \ dx.
    $$
    Thus, with $v=u_j(0)$ and using  \eqref{eqvarv} replacing $i$ per $j$ and  $v$ 
    by   $u'_i(0)$, we obtain 
    $$   \sigma_i' \int_{\partial \Omega} u_i(0) u_j(0) \rho \ dx = - \sigma  \int_{\partial \Omega}   u_i(0) u_j(0) \delta \rho  \ d x$$
    which exactly  proves that  $L^2(\partial \Omega, \rho)$-orthonormal basis $
     u_1(0),..., u_p(0)$ diagonalizes $q_{\delta \rho}$ on $E_\sigma$. Moreover, 
      the $ \sigma_i'$ are eigenvalues of this quadratic form.
\end{proof}

We can now establish optimality conditions with respect to the boundary density
in case of multiple eigenvalues. 
\begin{prop} \label{p:UnitSphere} Let $j \geq 1$ and $\Omega$ a smooth domain of $\mathbb{R}^2$. Assume a nonnegative 
    $\rho \in L^2(\partial \Omega)$ maximizes the product $\sigma_j(\rho) L(\rho)$
     among all nonnegative functions of $L^2(\partial \Omega)$ where $L(\rho) = \int_{\partial \Omega} \rho \ dx$
  and $\sigma_j(\rho)$ is the $j$-th eigenvalues of system \eqref{e:Steklov2}.
If $\sigma_j(\rho)$ is of multiplicity $p > 1$ and $E_{\sigma_j}$ its eigenspace, there 
exists a basis of $p$ functions $u_1, \dots ,u_p$ of $E_{\sigma_j}$ which satisfy 
$$
\sum_{i=1}^p u_i(x)^2 = 1
$$
for all $x \in \partial \Omega$.
\end{prop}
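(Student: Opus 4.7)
The plan is to derive first-order optimality conditions at the maximizing density $\rho$ and then recast them as a convex-geometric statement about the eigenspace $E_{\sigma_j}$. First, I would differentiate $\tilde\sigma_i(\varepsilon) = \sigma_i(\varepsilon) L(\varepsilon)$ along a perturbation $\rho_\varepsilon = \rho + \varepsilon \delta \rho$. Since $L'(0) = \int_{\partial\Omega}\delta\rho\,dx$ and $\sigma_i'(0)$ are the eigenvalues of $q_{\delta\rho}$ on $E_{\sigma_j}$ by Lemma~\ref{l:quadform}, the product-rule computation shows that the derivatives $\tilde\sigma_i'(0)$ are the eigenvalues of the quadratic form
\[
\tilde q_{\delta\rho}(u) \ := \ \sigma \int_{\partial\Omega}\bigl(1 - L u^2\bigr)\delta\rho\,dx
\]
on $E_{\sigma_j}$, where we use a basis that is $L^2(\partial\Omega,\rho)$-orthonormal (so $\int \rho u^2 = 1$). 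This extends \eqref{e:Drho} to the multiple-eigenvalue case.

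Next I would extract the optimality condition. Because $\rho$ maximizes $\sigma_j L$ and $\sigma_j$ has multiplicity $p$, at least one of the $p$ branches of the split must be non-increasing under any admissible $\delta\rho$, which gives $\lambda_{\min}(\tilde q_{\delta\rho}) \leq 0$. Assuming $\rho$ is an interior maximizer (i.e.\ $\rho > 0$ on $\partial\Omega$), the opposite perturbation $-\delta\rho$ is also admissible and yields $\lambda_{\max}(\tilde q_{\delta\rho}) \geq 0$. Equivalently, for every $\delta\rho \in L^2(\partial\Omega)$,
\[
\min_{\substack{u \in E_{\sigma_j} \\ \|u\|_\rho = 1}} \int_{\partial\Omega}\bigl(1 - L u^2\bigr)\delta\rho\,dx \ \leq \ 0 \ \leq \ \max_{\substack{u \in E_{\sigma_j} \\ \|u\|_\rho = 1}} \int_{\partial\Omega}\bigl(1 - L u^2\bigr)\delta\rho\,dx.
\]

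I would then pass to a convex-hull formulation. Let $K := \{L u^2 - 1 : u \in E_{\sigma_j},\ \|u\|_\rho = 1\}$. The displayed inequalities forbid any continuous linear functional $\delta\rho \mapsto \int_{\partial\Omega}(Lu^2-1)\delta\rho\,dx$ from strictly separating $0$ from $K$, so by Hahn--Banach, $0 \in \overline{\mathrm{conv}}(K)$. Since $K$ lies in the finite-dimensional subspace spanned by $\{e_k e_l\}_{k,l=1}^p$, where $\{e_k\}$ is any $L^2(\partial\Omega,\rho)$-orthonormal basis of $E_{\sigma_j}$, Carath\'eodory produces finitely many $u_i$ and positive weights $\lambda_i$ with $\sum_i \lambda_i L u_i^2 \equiv 1$ pointwise on $\partial\Omega$. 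Writing $u_i = \sum_k x_{i,k} e_k$, the matrix $M := L\sum_i \lambda_i x_i x_i^T$ is a symmetric positive semi-definite $p\times p$ matrix with $\mathrm{tr}(M) = L$ and $\sum_{k,l} M_{kl}\,e_k(x) e_l(x) = 1$. Spectrally decomposing $M = \sum_{k=1}^p \mu_k v_k v_k^T$ and setting $u_k := \sqrt{\mu_k}\sum_j v_{k,j} e_j$ gives $\sum_{k=1}^p u_k^2 \equiv 1$ on $\partial\Omega$, as required.

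The main obstacle is twofold. First, when $\rho$ vanishes on some set $N \subset \partial\Omega$, only $\delta\rho \geq 0$ is admissible on $N$, so the symmetrization step only yields the one-sided bound $\sum L u_i^2 \leq 1$ on $N$; upgrading this to pointwise equality everywhere requires an additional argument, for instance a limiting procedure or appeal to the regularity of the maximizing density from \cite{Fraser_2015}. Second, the density matrix $M$ produced by Carath\'eodory can be rank-deficient, in which case the $u_k$ span only a proper subspace of $E_{\sigma_j}$ rather than forming a basis; one then has to replace $M$ by a positive definite element of the (convex, affine-slice) set of matrices satisfying $\sum_{k,l} M_{kl} e_k e_l \equiv 1$, using that its relative interior inside the PSD cone is nonempty whenever the convex set itself is.
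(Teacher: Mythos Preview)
Your strategy matches the paper's: couple Lemma~\ref{l:quadform} with a Hahn--Banach separation argument to conclude that the constant function $1$ lies in the convex hull of squared eigenfunctions. The only substantive difference is the packaging of the convex set. You work with the affine slice $\{Lu^2-1:\|u\|_\rho=1\}$ and invoke the two-sided min/max inequality (hence the symmetrization $\delta\rho\mapsto-\delta\rho$) to rule out a separating $\delta\rho$. The paper instead separates $1$ directly from the cone $K=\mathrm{Co}\{u^2:u\in E_{\sigma_j}\}$; because $K$ is a cone, the separating functional automatically satisfies $\int_{\partial\Omega} u^2\,\delta\rho\le 0$ for \emph{every} $u\in E_{\sigma_j}$ together with $\int_{\partial\Omega}\delta\rho>0$. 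Thus all $p$ branches have $\sigma_i'\ge 0$ while $L'>0$, so $\sigma_j L$ strictly increases along $\delta\rho$ regardless of where the index $j$ sits in the multiple cluster---no two-sided argument and no use of $-\delta\rho$ are needed. The two obstacles you flag (admissibility of the separating $\delta\rho$ on $\{\rho=0\}$, and upgrading the Carath\'eodory combination to a genuine basis of exactly $p$ functions) are legitimate, and the paper's proof does not address them either: it simply stops at $1\in K$.
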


\begin{proof} The proposition is an almost direct consequence of Lemma \ref{l:quadform}
    and of Hahn-Banach separation theorem. Consider the convex hull
    $K=\text{Co}\left\{ u^2,\  u \in E_{\sigma_j} \right\}$. 
    We want to prove that the function identically 
    equal to one belongs to $K$. If it is not the case, by Hahn-Banach theorem applied to the finite 
    dimensional normed vector subspace of $C^1(\partial \Omega)$ spanned by $K$ and $1$, 
    there exists a function 
    $\delta \rho \in C^1(\partial \Omega)$ such that $\int_{\partial \Omega}  \delta \rho \ dx >0$
     and which satisfies, for all $ u \in E_{\sigma_j}$,
    \[ \int_{\partial \Omega} u^2 \delta \rho \ d x \leq 0.\]
    This last inequality asserts that the quadratic form $q_{\delta \rho}$ on 
    $E_{\sigma_j}$ has nonnegative eigenvalues. Thus, both the $p$ eigenvalues and 
     the weighted length increase in the direction of $\delta \rho$. 
     As a consequence, for $\varepsilon$ small enough, the 
    product of $\sigma_j(\rho + \varepsilon \delta \rho) L(\rho + \varepsilon \delta \rho)$
     is strictly greater than $\sigma_j(\rho) L(\rho)$ due to the strict inequality 
     of the separation result which contradicts the optimality.
\end{proof}

To compute the derivatives of $\sigma$ and $\tilde \sigma$ with respect to the centers $c$ and radii $r$, 
we first compute the shape derivative with respect to perturbations of the boundary of $\Omega_{c,r}$. This result extends a result in \cite{dambrine2014extremal,Akhmetgaliyev_2017,Bogosel_2017} to $\rho \neq 1$. 
\begin{prop}  \label{p:ShapeDer}
Consider the perturbation $x\mapsto x + \tau v$.  Then a simple (unit-normalized) Steklov eigenpair $ (\sigma, u)$ satisfies the perturbation formula
\begin{equation}
\sigma^{'} = \int_{\partial \Omega}  \left(  |\nabla u |^2  - 2 \rho^2 \sigma^2 u^2 - \sigma \kappa \rho u^2 \right)  (v \cdot \hat n)  +  \sigma \rho_t  u^2 (v \cdot \hat t) \ dx, 
\end{equation}
where $\hat n$ is the outward unit normal vector, 
$\hat t$ denotes the tangential direction, and
where $\kappa$ is the signed curvature of the boundary. 
We also have $L' = \int_{\partial \Omega}  \kappa \rho  (v \cdot \hat n)  - \rho_t  (v \cdot \hat t) \ dx $. 
\end{prop}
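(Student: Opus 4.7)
The plan is to derive both formulas via shape-derivative calculus applied to the Rayleigh quotient, adopting the convention that $\rho$ is transported materially by the flow $\Phi_\tau(x) = x + \tau v(x)$ (so on the moving boundary one has $\rho(\tau,\Phi_\tau(x)) = \rho(x)$ at $\tau=0$). This convention is precisely what one gets from the conformal-map interpretation of $\rho$ in Section~\ref{s:uniformization}. I would begin by normalizing $u$ so that $\int_{\partial\Omega_{c,r}} \rho u^2\,ds = 1$, which reduces $\sigma$ to $\int_\Omega |\nabla u|^2\,dx$. Applying the standard Hadamard formula to this volume integral yields
$$
\sigma' = 2\int_\Omega \nabla u \cdot \nabla u'\,dx + \int_{\partial\Omega} |\nabla u|^2 \,(v\cdot \hat n)\,ds,
$$
where $u'$ is the Eulerian shape derivative. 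Since $\Delta u = 0$ persists on every $\Omega_\tau$, one has $\Delta u' = 0$ on $\Omega$, and Green's identity together with the Steklov boundary condition gives $\int_\Omega \nabla u\cdot\nabla u'\,dx = \sigma\int_{\partial\Omega} \rho u u'\,ds$.

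Next, I would extract $\int_{\partial\Omega}\rho u u'\,ds$ by differentiating the normalization. Pulling $\int_{\partial\Omega_\tau}\rho u^2\,ds = 1$ back to $\partial\Omega$ via $\Phi_\tau$, using the vanishing material derivative of $\rho$ and $\partial_\tau J_\tau|_{\tau=0} = \operatorname{div}_\tau v$, produces
$$
0 = 2\int_{\partial\Omega} \rho u\,(u' + v\cdot \nabla u)\,ds + \int_{\partial\Omega} \rho u^2\,\operatorname{div}_\tau v\,ds.
$$
Solving for $\int \rho u u'\,ds$ and substituting above replaces $u'$ by purely boundary data involving $v$, $\nabla u$, and $\operatorname{div}_\tau v$.

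The remainder is boundary bookkeeping. Decompose $v = (v\cdot\hat n)\hat n + (v\cdot\hat t)\hat t$; then $v\cdot\nabla u = (v\cdot\hat n)\partial_n u + (v\cdot\hat t)\partial_t u$, and the Steklov relation $\partial_n u = \sigma\rho u$ turns the normal piece into $\sigma\rho^2 u^2$. Using the two-dimensional identity $\operatorname{div}_\tau v = \partial_t(v\cdot\hat t) + \kappa(v\cdot\hat n)$ and then integrating $-\sigma\int \rho u^2\,\partial_t(v\cdot\hat t)\,ds$ by parts around each closed boundary component (no endpoint contributions, since $\partial\Omega_{c,r}$ is a disjoint union of circles), the two tangential terms involving $\rho u\partial_t u(v\cdot\hat t)$ cancel exactly, leaving only the $\sigma\rho_t u^2(v\cdot\hat t)$ contribution. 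Collecting normal terms yields the stated expression for $\sigma'$. The formula for $L'$ follows from the same transport convention: $L(\tau) = \int_{\partial\Omega}\rho\,J_\tau\,ds$, so $L'(0) = \int_{\partial\Omega}\rho\,\operatorname{div}_\tau v\,ds$, and expanding and integrating by parts gives the signed tangential term $-\rho_t(v\cdot\hat t)$.

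The main obstacle is the careful tracking of tangential contributions. The material-transport convention for $\rho$ is what produces the $\rho_t(v\cdot\hat t)$ term rather than a normal-derivative term $\rho_n(v\cdot\hat n)$ that would appear under a spatially-fixed $\rho$; and the cancellation of the $\rho u\partial_t u$ contributions between the Hadamard boundary term and the tangential integration by parts is the key non-obvious simplification. A minor technical point is justifying that $u'$ is a classical harmonic function with the needed regularity, which is standard under the simplicity hypothesis on $\sigma$ (and is covered by the analytic branches of Lemma~\ref{l:branches} when specialized to $p=1$).
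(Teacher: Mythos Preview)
Your argument is correct and follows the same overall strategy as the paper: differentiate $\sigma=\int_\Omega|\nabla u|^2$, use Green's identity plus the Steklov condition to reduce to $2\sigma\int_{\partial\Omega}\rho u u'$, and compute that quantity by differentiating the normalization $\int_{\partial\Omega}\rho u^2=1$. The only difference is in how the boundary-integral derivative is carried out. The paper works Eulerianly, applying the standard Hadamard formula for boundary integrals and extending $\rho$ constantly in the normal direction so that $\partial_n(\rho u^2)=2\rho u u_n$ and $\rho'=-\rho_t(v\cdot\hat t)$; no tangential integration by parts is needed. You instead pull back by $\Phi_\tau$, use the material-transport convention for $\rho$ and the identity $\operatorname{div}_\tau v=\partial_t(v\cdot\hat t)+\kappa(v\cdot\hat n)$, and then integrate by parts along each closed boundary curve; the cancellation of the two $\rho u\,\partial_t u\,(v\cdot\hat t)$ contributions you flag is an artifact of this route and simply does not arise in the paper's version. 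Both computations are equivalent and equally short; the paper's normal-extension trick is slightly more direct, while your tangential-divergence formulation makes the origin of the curvature term $\kappa(v\cdot\hat n)$ more transparent.
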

\begin{proof}
We follow the proof in \cite{Akhmetgaliyev_2017}.  Let primes denote the shape derivative. 
From the identity $\sigma = \int_\Omega |\nabla u |^2 \ dx$, we compute
\begin{subequations}
\label{e:dsigma}
\begin{align}
\sigma' &= 2\int_\Omega \nabla u \cdot \nabla u' \ dx + \int_{\partial \Omega}  |\nabla u |^2  (v\cdot \hat n) \ dx 
&& \textrm{(shape derivative)} \\
&= - 2 \int_{\Omega} (\Delta u) u'  \ dx + 2 \int_{\partial \Omega} u_n u'  \ dx + \int_{\partial \Omega}  |\nabla u |^2  (v\cdot \hat n) \ dx 
&& \textrm{(Green's identity)} \\
&= 2\sigma \int_{\partial \Omega} \rho u u'  \ dx + \int_{\partial \Omega}  |\nabla u |^2  (v\cdot \hat n) \ dx
&& \textrm{(Equation \eqref{e:Steklov2})}. 
\end{align}
\end{subequations}
Differentiating  the normalization equation, $ \int_{\partial \Omega} \rho u^2  \ dx= 1$, we have that 
\begin{align*}
 2 \int_{\partial \Omega} \rho u u' \ dx = -  \int_{\partial \Omega}  \rho' u^2 +  \left(  \partial_n (\rho u^2 ) + \kappa  \rho u^2\right) (v\cdot \hat n) \ dx, 
 \end{align*}
 where $\kappa$ is the curvature of the boundary and $\rho' = - \nabla \rho \cdot v$.  
 Extending $\rho$ constantly in the normal direction, we  have $\rho' + (v\cdot \hat n) \rho_n = -\rho_t (v\cdot t)$ where $t$ denotes the tangential direction. 
 We then have that 
\begin{align*}
 2 \int_{\partial \Omega} \rho u u' \ dx =   \int_{\partial \Omega}  \rho_t  u^2 (v \cdot t) -  \left(  2 \rho u u_n + \kappa  \rho u^2\right) (v\cdot \hat n) \ dx. 
 \end{align*} 
Combining this with \eqref{e:dsigma}, we obtain the desired result. 
 \end{proof}
 
 Using Proposition~\ref{p:ShapeDer}, we can now compute the derivatives of $\sigma$ and $\tilde \sigma$ for the domain 
 $ \Omega_{c,r} = D \ \setminus \  \cup_{i=1}^{b-1} D_i $ 
with respect to a center $c_i$ and radius $r_i$ of $D_i$ as follows. 
To compute the derivative with respect to $r_i$, we choose a perturbation $v$ so that 
 $$
 v \cdot \hat n = -1 
 \qquad \textrm{and} \qquad
 v \cdot \hat t = 0 
 \qquad \textrm{on} \ \partial D_i. 
 $$
 Then, noting that $\kappa =  -1/r_i$, we obtain
\begin{equation}
\label{e:Dr}
\frac{\partial \sigma}{\partial r_i} = - \int_{\partial D_i}   |\nabla u |^2  - 2 \rho^2 \sigma^2 u^2 +  \frac{\sigma}{r_i} \rho u^2     \ dx. 
\end{equation}
To compute the derivative with respect to $c_i$, we take two perturbations $v$ of the form  
 $$
 v \cdot \hat n = \cos \theta 
 \qquad \textrm{and} \qquad
 v \cdot \hat t =  \sin \theta
 \qquad \textrm{on} \ \partial D_i 
 $$
 and 
 $$
  v \cdot \hat n = \sin \theta 
 \qquad \textrm{and} \qquad
 v \cdot \hat t =  -\cos \theta
 \qquad \textrm{on} \ \partial D_i, 
$$
 to obtain 
\begin{equation}
\label{e:Dc}
\nabla_{c_i} \sigma = \int_{\partial \Omega}  \left(  |\nabla u |^2  - 2 \rho^2 \sigma^2 u^2 + \frac{\sigma}{r_i}  \rho u^2 \right) 
\begin{pmatrix} \cos \theta \\ \sin \theta \end{pmatrix}  
+  \sigma \rho_t  u^2
\begin{pmatrix} \sin \theta \\ 
-\cos \theta \end{pmatrix}  
 \ dx. 
\end{equation}

\begin{rem} \label{r:IsoCoords}
In \cite{Fraser_2015}, a detailed study of perturbations to the metric yield two conditions for a maximal Steklov eigenvalue. 
The first comes from the study of perturbations in  ``conformal directions'' and, as in Proposition~\ref{p:UnitSphere}, result in the existence of eigenfunctions $\{ u_j\}_{j=1}^n$ such that the map 
$U = [u_1 | \cdots | u_n] \colon \Omega \to \mathbb B^n$ 
satisfies 
$U(\partial \Omega) \subset \mathbb S^{n-1}$. 
The second condition comes from the study of non-conformal perturbations of the metric and give that the map 
$U\colon \Omega \to \mathbb B^n$ 
has isothermal coordinates, \ie, satisfies 
\begin{align*}
& | \partial_x U | = | \partial_y U |, \\
& \partial_x U \cdot \partial_y U = 0. 
\end{align*}
Since a change in the parameters $(c,r)$ gives a perturbation to the metric that has components in both the conformal and non-conformal directions (see Remark~\ref{r:dim} and  Example~\ref{e:ex2}), this second condition is nontrivial to obtain from \eqref{e:Dr} and \eqref{e:Dc}. 
\end{rem}

\section{Steklov eigenvalues of rotationally symmetric annuli and the critical catenoid}
\label{s:CC}

Here, we discuss the Steklov eigenvalues of rotationally symmetric annuli,  the critical catenoid, and coverings of the critical catenoid. 
These results are also discussed in  \cite{Fraser_2011,Fan_2014}  using cylindrical coordinates, but it useful to review these computations and have them written in annular coordinates for comparison and discussion; see also \cite{Martel_2014,dittmar2004sums}. 

\subsection{Steklov eigenvalues of rotationally symmetric annuli} \label{s:SymAnn}
Here,  for $s\in (0,1)$, we consider the rotationally symmetric annulus, 
$$
A_s = \{ (r,\theta) \colon r \in [s, 1] \},
$$ 
and explicitly compute Steklov eigenvalues satisfying 
\begin{subequations} 
\label{e:cenAnn}
\begin{align}
& [r^{-1} \partial_r r \partial_r + r^{-2} \partial_\theta^2] u  = 0 && (r,\theta) \in A_s, \\
& \partial_\nu u = \sigma \rho_s u && r = s, \\
& \partial_\nu u = \sigma \rho_1 u && r = 1. 
\end{align}
\end{subequations}
Note that if 
$(\sigma,u)$ is an eigenpair satisfying \eqref{e:cenAnn} with parameters $(s,\rho_s, \rho_1)$, then for $\alpha > 0$, 
$(\sigma/\alpha,u)$ is an eigenpair satisfying \eqref{e:cenAnn} with parameters $(s, \alpha \rho_s,  \alpha \rho_1)$. 
Using separation of variables, we obtain general solutions to the Laplace equation of the form 
$$
u(r,\theta) = C_1 + C_2 \log(r) + \sum_{k=1}^{\infty} ( C_3 r^k + C_4 r^{-k} ) ( C_5 \cos k \theta  +  C_6 \sin k \theta), 
$$
where $C_1, \ldots, C_6$ are constants. Using the Steklov boundary conditions, we can determine the eigenpairs, $(\sigma, u)$. Of course, there is a trivial eigenvalue, $\sigma_0 = 0$ with corresponding constant eigenfunction.
There is another eigenpair with eigenfunction that is constant in $\theta$, given by 
$$
\sigma =  \frac{\rho_1  + s \rho_s}{ \rho_1 \rho_s s } \frac{1}{\log s^{-1} }, 
\qquad \qquad  
u(r,\theta) = 1 + \sigma \rho_1 \log r. 
$$
We note that $L = 2\pi (\rho_1 + s \rho_s)$, so that 
$$
\tilde \sigma = \sigma L =  2 \pi  \frac{ (\rho_1  + s \rho_s)^2 }{ \rho_1 \rho_s s } \frac{1}{\log s^{-1} }. 
$$
For each $k = 1, 2, \ldots$, there are also eigenfunctions that are oscillatory in $\theta$ of the form 
$$
u(r,\theta) = (A r^k + B r^{-k} ) \{\cos k \theta,  \ \sin k \theta \}, 
$$
where $A$, $B$ are constants. Here, the brackets indicate that we can choose either $\cos$ or $\sin$; the corresponding eigenvalue has multiplicity two. 
Using the boundary conditions we obtain the $2\times 2$ generalized eigenproblem, 
$$
\begin{pmatrix}
k & -k \\
- k s^{k-1} & k s^{-k-1} 
\end{pmatrix}
\begin{pmatrix}
A \\ B 
\end{pmatrix}
= \sigma
\begin{pmatrix}
\rho_1 & \rho_1 \\
\rho_s s^{k} & \rho_s s^{-k} 
\end{pmatrix}
\begin{pmatrix}
A \\ B 
\end{pmatrix}.
$$
This is equivalent to the eigenproblem
$$
\frac{k}{ \rho_1 s \rho_s \sinh(-k \log s) }  
\begin{pmatrix}
s \rho_s s^{-k}  + \rho_1 s^k & - s \rho_s s^{-k} - \rho_1 s^{-k} \\
- s \rho_s s^k - \rho_1 s^k  & s \rho_s s^k + \rho_1 s^{-k} 
\end{pmatrix}
\begin{pmatrix}
A \\ B 
\end{pmatrix}
= \sigma
\begin{pmatrix}
A \\ B 
\end{pmatrix},
$$
from which one obtains the real positive eigenvalues 
$$\sigma_{k,\pm} = 
 \frac{k }{2 \rho_1 s \rho_s} \coth(-k \log s) \left[ \rho_1 + s \rho_s \pm \sqrt{ \left( \rho_1 + s \rho_s\right)^2 - 4 \rho_1 s \rho_s \tanh^2(-k \log s) } \right]. 
$$

In Figure~\ref{f:StekEigCenAnn}(left), for 
$\rho_s/\rho_1 = 11.01609$, 
we display the length-normalized Steklov eigenvalues for various values of $s$.  
The eigenvalue corresponding to the radially symmetric eigenfunction is plotted in red. 
The thin vertical line indicates the value $s = 0.090776$. 
For this value of $s$, the first Steklov eigenvalue has multiplicity three and  length-normalized eigenvalue $\tilde \sigma = 10.47478$.  
In Figure~\ref{f:StekEigCenAnn}(right), we plot contours of two of the eigenfunctions; the third can be obtained by rotating the image of the lower eigenfunction by $\frac{\pi}{2}$.

\begin{figure}[t]
\centering
\includegraphics[height=0.5\textwidth]{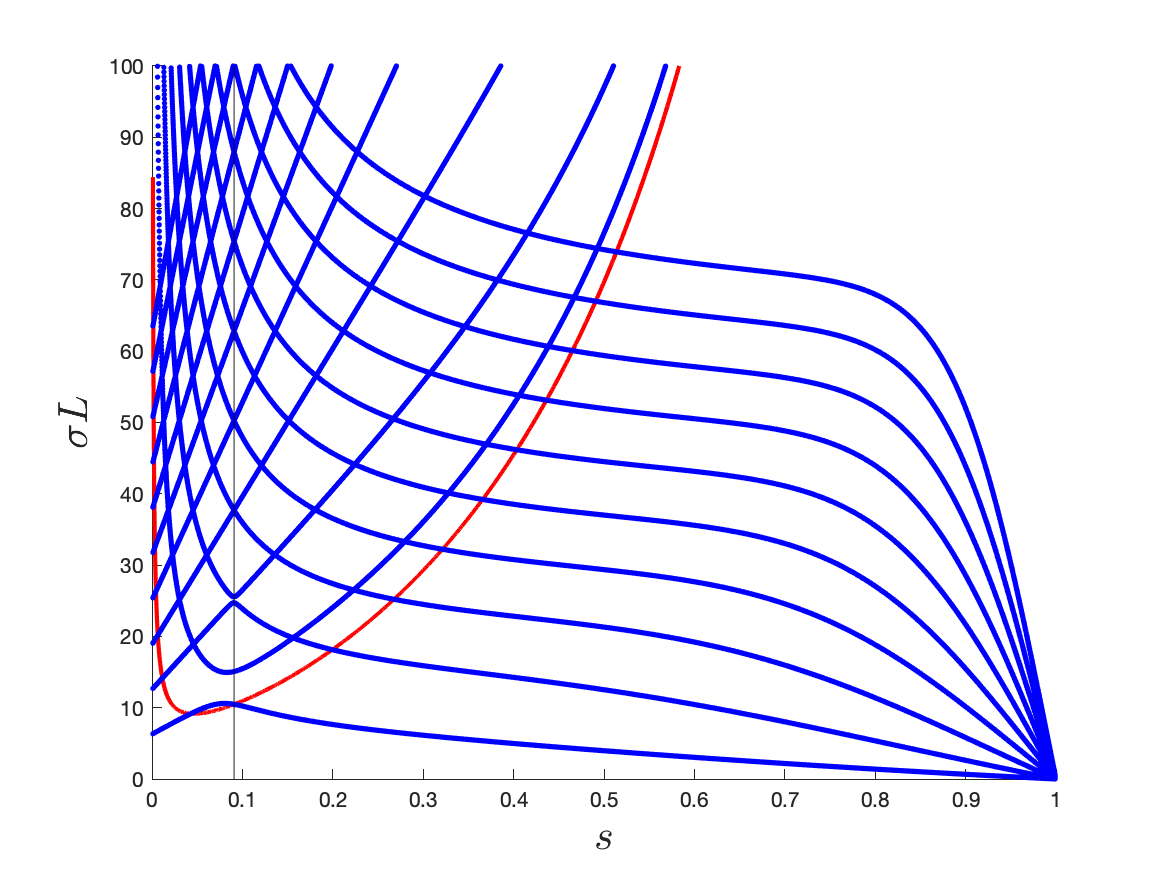}
\includegraphics[trim=25 35 20 25, clip,height=0.5\textwidth]{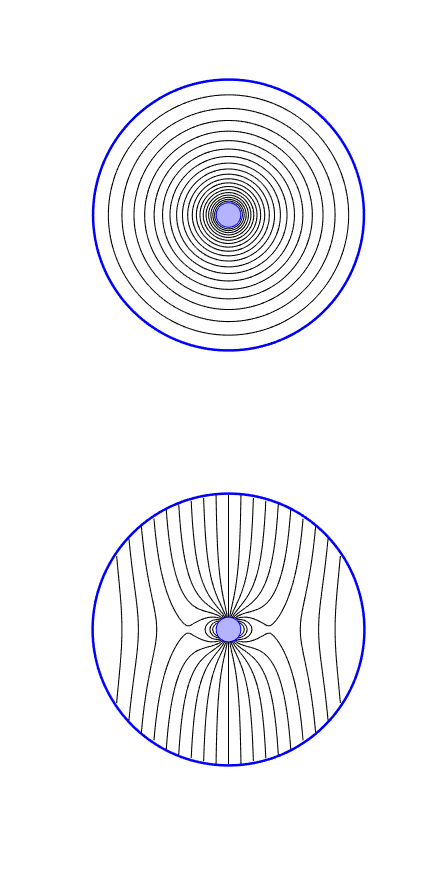}
\caption{{\bf (left)} Length normalized Steklov eigenvalues of the annulus, $A_s$ for varying inner radius $s$.  The blue lines represent multiplicity two eigenvalues for different values of $k$, while the red line represents a multiplicity one eigenvalue.  
{\bf (right)} For $s = 0.090776$, we plot contours of eigenfunctions corresponding to the first Steklov eigenvalue. See Section~\ref{s:SymAnn}.}
\label{f:StekEigCenAnn}
\end{figure}

\subsection{Extremal eigenvalues for rotationally symmetric annuli} 
We consider the extremal eigenvalue problem for rotationally symmetric annuli, 
\begin{align}
\label{e:ExtStekCenAnn}
\max_{s, \rho_s, \rho_1}  \ \tilde \sigma_j, 
\qquad \qquad 
\tilde \sigma_j := \sigma_j L.
\end{align}
Here, $\sigma_j$ is assumed to satisfy \eqref{e:cenAnn}. 

\subsubsection{The first eigenvalue} 
We first consider $j=1$. By the symmetry of $\rho_1$ and $s \rho_s$, we obtain the optimality condition 
$$
s\rho_s = \rho_1 =: \rho.
$$
In this case, we have the two length-normalized eigenvalues and associated $L^2(\partial \Omega, \rho)$-normalized eigenfunctions
\begin{align*}
\sigma_{1,-} L &= 4 \pi \frac{1-s}{1+s}, && 
u(r,\theta) =\frac{1}{\sqrt{2 \pi \rho} }  \frac{ \cosh\left( \log \frac{r}{\sqrt s} \right) }{ \cosh\left( \log \sqrt s \right) } \{ \cos \theta,  \ \sin \theta \} \\
\sigma L &= \frac{8 \pi}{ \log s^{-1}},  && 
u(r,\theta) =  \frac{1}{\sqrt{4 \pi \rho } } \frac{\log \frac{r}{\sqrt{s}}}{ \log \sqrt{s}} . 
\end{align*}
The two values of $\sigma L$ are equal when $s$ is the unique solution of the transcendental equation 
$$
\frac{1+s}{ 1-s } =-  \log \sqrt{s}, 
\qquad \qquad s > 0. 
$$
The solution is approximately given by $s = 0.090776$.

We now consider the map $U \colon A_s \to \mathbb B^3$, defined by 
$$
U(r,\theta) = 
\begin{pmatrix}
\frac{\cosh\left( \log \frac{r}{\sqrt s} \right) }{\sqrt{ \cosh^2(\log \sqrt s)  +  \log^2\sqrt{s} }}  \cos \theta \\
\frac{ \cosh\left( \log \frac{r}{\sqrt s} \right) }{\sqrt{ \cosh^2(\log \sqrt s)  +  \log^2\sqrt{s} }} \sin \theta \\
 \frac{\log \frac{r}{\sqrt s}}{ \sqrt{ \cosh^2(\log \sqrt s) + \log^2(\sqrt s)}} 
\end{pmatrix}, 
\qquad \qquad 
 (r,\theta) \in A_s. 
$$
Note that this map has coordinates that are linear combinations of the above eigenfunctions. 
One can check that these are isothermal coordinates, \ie, 
\begin{align*}
& |\partial_r U(r,\theta) |^2 =  r^{-2} | \partial_\theta U(r,\theta) |^2, &&\forall (r,\theta) \in A_s, \\
& \partial_r U(r,\theta) \cdot r^{-1} \partial_\theta U(r,\theta)  = 0, &&\forall (r,\theta) \in A_s,
\end{align*}
and satisfy  $U(\partial A_s) \subset \mathbb{S}^2 \subset \mathbb R^3$, \ie, 
$$
|U(1,\theta)|^2 = |U(s,\theta)|^2 = 1,
\qquad \qquad 
\forall \theta \in [0,2\pi].
$$ 

Furthermore, it is not difficult to check that $U(A_s)$ is the critical catenoid. That is,  
$$
U(A_s) = C_{\alpha^*}
$$
where 
$$
C_\alpha = \left\{ x\in \mathbb R^3 \colon \sqrt{x_1^2 + x_2^2} = \alpha \cosh \left( \frac{x_3}{\alpha} \right) \right\}, 
\qquad \qquad
\alpha > 0, 
$$  
is a \emph{catenoid} and the \emph{critical catenoid} is the catenoid  with 
 $\alpha = \alpha^* = \left( \beta^2 + \cosh^2\beta \right)^{- \frac 1 2 }$ where $\beta =  - \log \sqrt{s} \approx 1.19968$ is the unique solution of $\beta = \coth \beta$. 
It is known that the critical catenoid is a  free boundary minimal surface \cite{Fraser_2015}.

\subsubsection{Higher eigenvalues} \label{s:HigherEigs} For larger values of $j$, we numerically solve \eqref{e:ExtStekCenAnn}. In Figure \ref{f:StekEigCenAnn2}, we plot the value of $\tilde \sigma_j$ as a function of $s$ and $\rho_s/ \rho_1$ for $j= 1, \ldots, 6$. The maximum value of $\sigma_j L$ is indicated and data for the maximum values is also tabulated.  Observe that for $j=1, 3, \ldots, 6$, we have that 
$ s\rho_s = \rho_1$.

\begin{figure}[t]
\begin{center}
\includegraphics[height=0.24\textwidth]{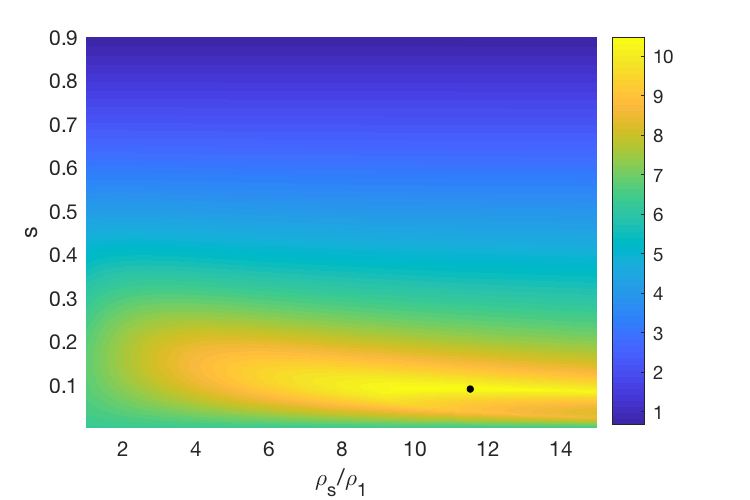}\includegraphics[height=0.24\textwidth]{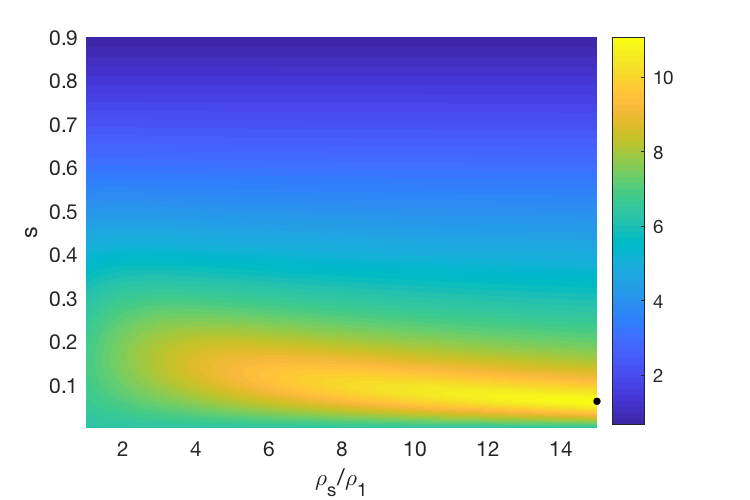}\includegraphics[height=0.24\textwidth]{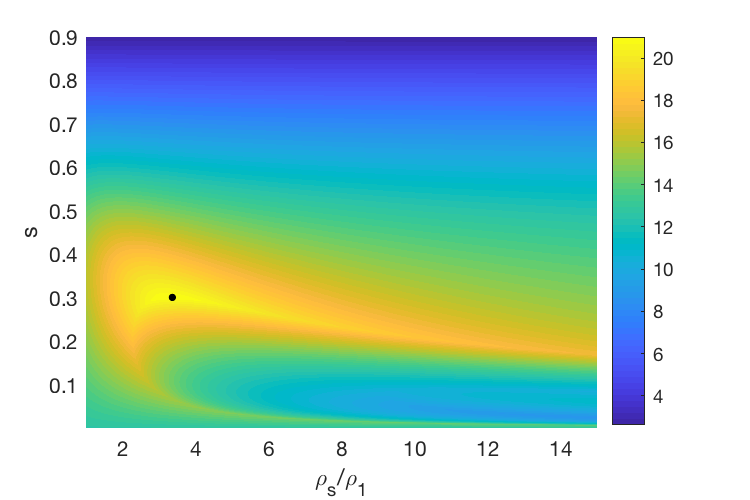}\\
\includegraphics[height=0.24\textwidth]{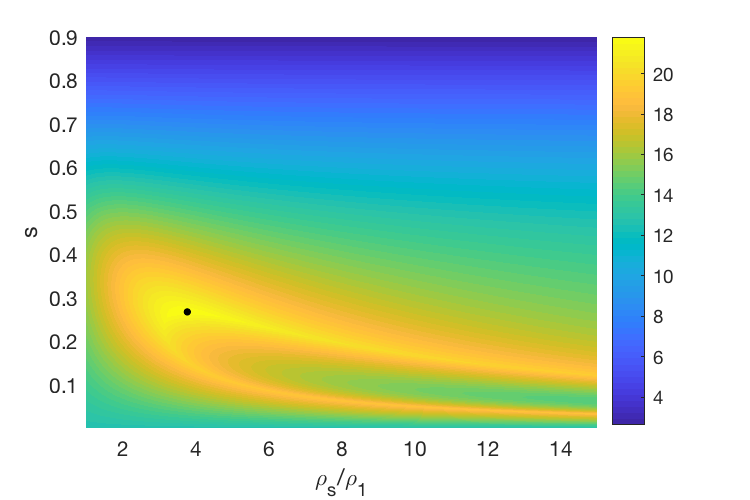}\includegraphics[height=0.24\textwidth]{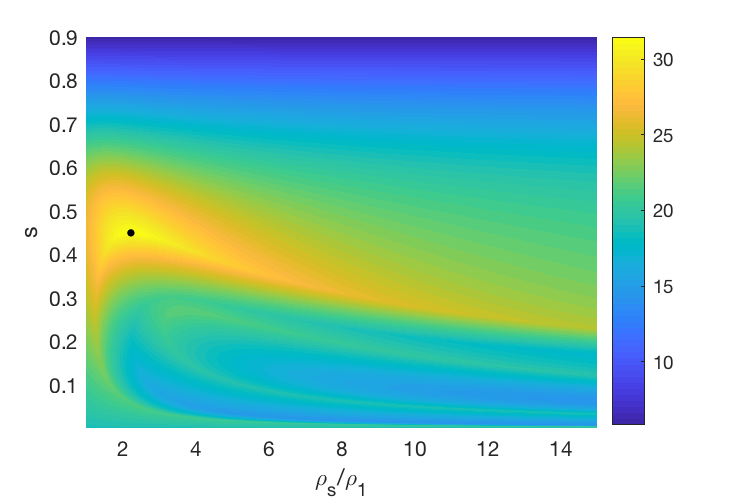}\includegraphics[height=0.24\textwidth]{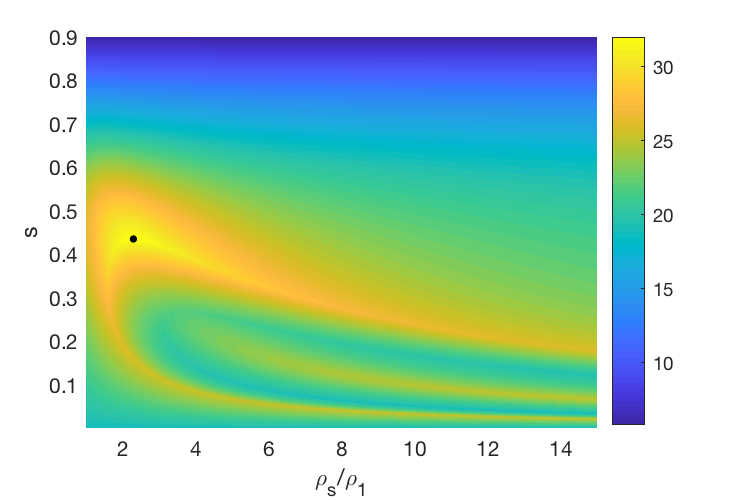} \\ \ \\
\begin{tabular}{|c|c|c|c|c|}
\hline 
$j$ & $\sigma_{j}L$ & $s$ & $\rho_{s}/\rho_{1}$ & multiplicity\tabularnewline
\hline 
\hline 
$1$ & $10.4748$ & $0.0908$ & $11.0161$ & 3\tabularnewline
\hline 
$2$ & $4 \pi$ & $0$ & $\infty$ & 3\tabularnewline
\hline 
$3$ & $20.9496$ & $0.3013$ & $3.3180$ & $3$\tabularnewline
\hline 
$4$ & $21.7656$ & $0.2679$ & $3.7322$ & 4\tabularnewline
\hline 
$5$ & $31.4243$ & $0.4494$ & $2.2251$ & 3\tabularnewline
\hline 
$6$ & $31.9495$ & $0.4354$ & $2.2988$ & 4\tabularnewline
\hline 
\end{tabular}
\end{center}
\caption{{\bf (top)} The value of $\tilde \sigma_j = \sigma_j L $ for $s\in [0.001,0.9]$ and $\frac{\rho_s}{\rho_1} \in [1, 15]$ for  $j=1,\ldots 6$. The black dots indicate the maximum values in the domain that is shown. {\bf (bottom)} A table with the maximum values of $\tilde \sigma_j$, the values of $s$ and  $\frac{\rho_s}{\rho_1}$ attaining the maximum, and the multiplicity of the eigenvalue at the maximum. See Section~\ref{s:HigherEigs}.}
\label{f:StekEigCenAnn2}
\end{figure}

For odd $j = 2m-1$, $m \in \mathbb N$, from the results of Fan, Tam, and Yu \cite{Fan_2014}, we have that the extremum is attained at the crossings of the  two length-normalized eigenvalues with associated $L^2(\partial \Omega, \rho)$-normalized eigenfunctions
\begin{align*}
\sigma_{j,-} L &= 4 \pi  j \frac{1-s^j }{1+s^j}, && 
u(r,\theta) =\frac{1}{\sqrt{2 \pi \rho} }  \frac{ \cosh\left( j \log \frac{r}{\sqrt s} \right) }{ \cosh\left( j \log \sqrt s \right) } \{ \cos j  \theta,  \ \sin  j \theta \} \\
\sigma L &= \frac{8 \pi}{ \log s^{-1}},  && 
u(r,\theta) =  \frac{1}{\sqrt{4 \pi \rho } } \frac{\log \frac{r}{\sqrt{s}}}{ \log \sqrt{s}} . 
\end{align*}
The two values of $\sigma L$ are equal when $s$ is the unique solution of the transcendental equation 
$$
\frac{1+s^j}{ 1-s^j } =-  \log s^{\frac j 2}, 
\qquad \qquad s > 0. 
$$
We obtain $\tilde \sigma_{2m - 1} = m \tilde \sigma_1^\star $, for $m \geq 1$. 
The extremal metric is achieved by the $m$-fold cover of the critical catenoid.

For even $j$, Fan, Tam, and Yu \cite{Fan_2014} show the following. 
For $j=2$, the extremal value is not attained among rotationally symmetric annuli 
and for even $j \geq 4$, the extremal value is attained. 
For $m \geq 2$, we have $\sigma_{2m} L = 4 m \pi \tanh(\frac{m T_{m,1}(1)}{2})$,  where $T_{m,1}(1)$ is the unique positive root of $m \tanh \frac{m s}{2} \tanh \frac{s}{2}= 1$
The extremal metric is achieved by the critical $m$-M\"{o}bius band, which have  genus $\gamma = 1$. 
These are not in the class of surfaces relevant to our later computational examples.

\section{Computational Methods} \label{s:CompMeth}
In Section~\ref{s:Red}, we described how conformal maps could be used to reduce  the general Steklov eigenproblem \eqref{e:Steklov} to the Euclidean Steklov eigenproblem \eqref{e:Steklov2}. 
In this section, we describe the computational methods used to solve the Euclidean Steklov eigenproblem \eqref{e:Steklov2}, 
optimization methods used to solve the extremal eigenvalue problem \eqref{e:MaxSteklov}, and
methods for computing the minimal surface from the Steklov eigenfunctions.

\subsection{Solving the Euclidean  Steklov eigenproblem \eqref{e:Steklov2}}
We use the method of particular solutions to solve the Steklov eigenproblem \eqref{e:Steklov2}. This method for multiply-connected Laplace problems was recently discussed in  \cite{Trefethen_2018}. The methods rely on the following Theorem.  

\begin{thm}[Logarithmic Conjugation Theorem \cite{Trefethen_2018}] 
\label{t:LogConjThm}
Suppose $\Omega$ is a finitely connected region, with $K_1,\ldots, K_N$ denoting the bounded components of the complement of $\Omega$. For each $j$, let $a_j$ be a point in $K_j$. 
If $u$ is a real valued harmonic function on $\Omega$, then there exist an analytic function $f$ on $\Omega$ and real numbers $c_1, \ldots , c_N$ such that
$$
u(z) = \textrm{Re} f(z) + c_1 \log |z-a_1| + \cdots + c_N \log |z-a_N|, 
\qquad \qquad   
\forall z \in \Omega.
$$
\end{thm}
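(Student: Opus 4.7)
The standard approach is to build a would-be holomorphic primitive of the Wirtinger derivative of $u$, subtract off logarithmic singularities to kill its periods around each hole, and then extract the real part. The logarithm terms are tailored to precisely this obstruction because $\log|z - a_j|$ has the multi-valued harmonic conjugate $\arg(z - a_j)$, whose period around $K_j$ is $2\pi$.

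\textbf{Step 1: a candidate holomorphic derivative.} Set $g(z) = u_x(z) - i u_y(z)$. A direct check using $\Delta u = 0$ and the equality of mixed partials shows $\partial g/\partial \bar z = \tfrac12(u_{xx} + u_{yy}) + \tfrac{i}{2}(u_{xy} - u_{yx}) = 0$, so $g$ is holomorphic on $\Omega$.

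\textbf{Step 2: compute the periods.} Choose a simple positively oriented loop $\gamma_k \subset \Omega$ enclosing $K_k$ and none of the other $K_j$. A direct expansion of $\oint_{\gamma_k} g(z)\,dz$ in real and imaginary parts yields
\begin{equation*}
\oint_{\gamma_k} g\,dz \;=\; \oint_{\gamma_k} du \;+\; i \oint_{\gamma_k}(u_x\,dy - u_y\,dx).
\end{equation*}
The first integral vanishes because $u$ is single-valued. The second is a real number (call it $2\pi c_k$) which represents the flux of $\nabla u$ across $\gamma_k$; it is independent of the chosen representative loop by Green's theorem applied in the annular region between any two such loops, using $\Delta u = 0$.

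\textbf{Step 3: subtract off the periods.} Since $\frac{1}{z - a_j}$ is holomorphic on $\Omega$ with $\oint_{\gamma_k} \frac{dz}{z-a_j} = 2\pi i\,\delta_{jk}$, the function
\begin{equation*}
G(z) \;:=\; g(z) - \sum_{j=1}^{N} c_j \,\frac{1}{z - a_j}
\end{equation*}
is holomorphic on $\Omega$ with $\oint_{\gamma_k} G\,dz = 0$ for every $k$. Because $\Omega$ is finitely connected, the loops $\gamma_1,\dots,\gamma_N$ generate $H_1(\Omega;\mathbb{Z})$, so $G$ has a single-valued holomorphic antiderivative $F$ on $\Omega$, i.e.\ $F' = G$.

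\textbf{Step 4: identify real parts.} Write $F = P + iQ$ with $P,Q$ real harmonic on $\Omega$; by the Cauchy--Riemann equations, $F' = P_x - i P_y$. Let $h(z) := u(z) - \sum_{j=1}^N c_j \log|z - a_j|$; a short computation using $\partial_x \log|z-a_j| = \mathrm{Re}\,\frac{1}{z-a_j}$ and $\partial_y \log|z-a_j| = -\mathrm{Im}\,\frac{1}{z-a_j}$ gives $h_x - i h_y = G = P_x - i P_y$. Hence $\nabla(h - P) \equiv 0$ on the connected set $\Omega$, so $h - P$ is a real constant $c_0$. Taking $f := F + c_0$ gives the required decomposition $u = \mathrm{Re}\,f + \sum_{j=1}^N c_j \log|z - a_j|$.

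\textbf{Where the work really sits.} Steps 1 and 4 are routine Cauchy--Riemann bookkeeping. The substantive content is Step 2 (showing the real period of $g\,dz$ vanishes so the coefficients $c_k$ are genuinely real) together with the homological fact invoked at the end of Step 3 that the $N$ loops around the bounded complementary components generate the first homology of a finitely connected planar domain; once these are in place, the logarithmic terms are forced on us as the unique cure for the multi-valuedness of the harmonic conjugate.
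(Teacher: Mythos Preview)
Your proof is correct and follows the classical argument for this result. Note, however, that the paper does not actually supply a proof of this theorem: it is quoted from \cite{Trefethen_2018} as a known tool to justify the choice of basis functions in the method of particular solutions, and no argument is given in the paper itself. So there is no ``paper's own proof'' to compare against; what you have written is precisely the standard derivation one finds in complex-analysis texts (form $g = u_x - i u_y$, show its periods are purely imaginary, kill them with simple poles at the $a_j$, integrate, and take real parts).
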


Let $M \in \mathbb{N}^*$ and consider some fixed punctured disk $\Omega_{c,r}$. 
Based on Theorem~\ref{t:LogConjThm}, we define the finite basis 
$\mathcal{B}$ to approximate solutions of eigenvalue problem \eqref{e:Steklov2} 
as the  union of the harmonic rescaled real and imaginary parts of the functions
\begin{equation} \label{e:mpsbasis}
\mathcal{B} =  \bigcup_{j = 0}^M \left\{ z \mapsto z ^ j \right\}
                \bigcup_{i = 1}^{k-1} \bigcup_{j = 1}^M \left\{  z \mapsto  \frac{1}{(z - c_i) ^ j}   \right\}
                \bigcup_{i = 1}^{k-1} \left\{ z \mapsto \log|z- c_i| \right\}. 
\end{equation}
For instance, we rescaled the basis polynomial $Re \left( \frac{1}{(z - c_2)^3} \right)$ 
 by a factor $r_2^3$ so that this basis function takes values of order $1$  
  on the second circle.
Consider now $(p_l)_{1 \leq l \leq L}$ a uniform sampling with respect to arc 
length of $\partial \Omega_{c,r}$. Using $\mathcal{B}$, we approximate solutions 
of eigenvalue problem \eqref{e:Steklov2b}  by the solution of the non symmetric
 square generalized eigenvalue problem
\begin{equation} \label{e:mpseig}
B^T A  \ u_d =  \sigma_d  \ B^T B \  u_d, 
\end{equation}
where
$ A = \left(\frac{\partial \phi}{\partial n}(p_l) \right)_{1 \leq l \leq L, \  \phi \in \mathcal{B}}$  and 
$ B = \left( \phi(p_l) \right)_{1 \leq l \leq L, \ \phi \in \mathcal{B}}$.

\begin{ex} \label{e:ex4}
To illustrate the complexity of the approach to obtain a fine approximation of eigenvalues, we considered a circular domain  with four holes and $L = 5000$ points; see Figure~\ref{f:diskconv}(left).
We evaluated the first six nontrivial  eigenvalues with a high number of $\mathcal{B}$ elements for $M = 50$. In Figure~\ref{f:diskconv}(right), you can observe the evolution of the error with respect to $M$ for $M$  taking values from $2$ to $10$.
 Taking the converged values as an approximation of the exact ones, in this specific example, it can be observed that with $M=10$ the error is already smaller than $10^{-8}$. 
 Here, the first nontrivial eigenvalue has multiplicity two, so the curves are almost indistinguishable. 
 \end{ex}
 
\begin{figure}
    \centering
    \includegraphics[height=0.35\textwidth]{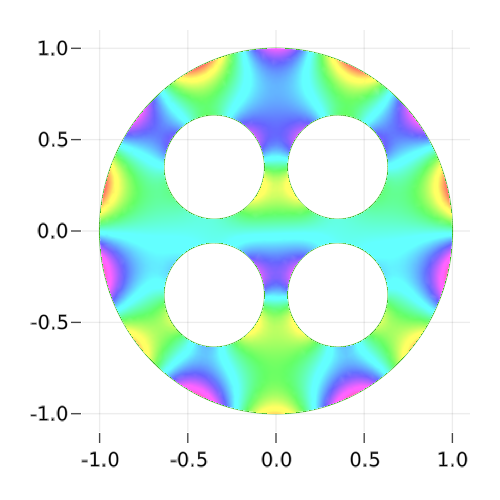} 
    \hspace{1cm}
    \includegraphics[height=0.35\textwidth]{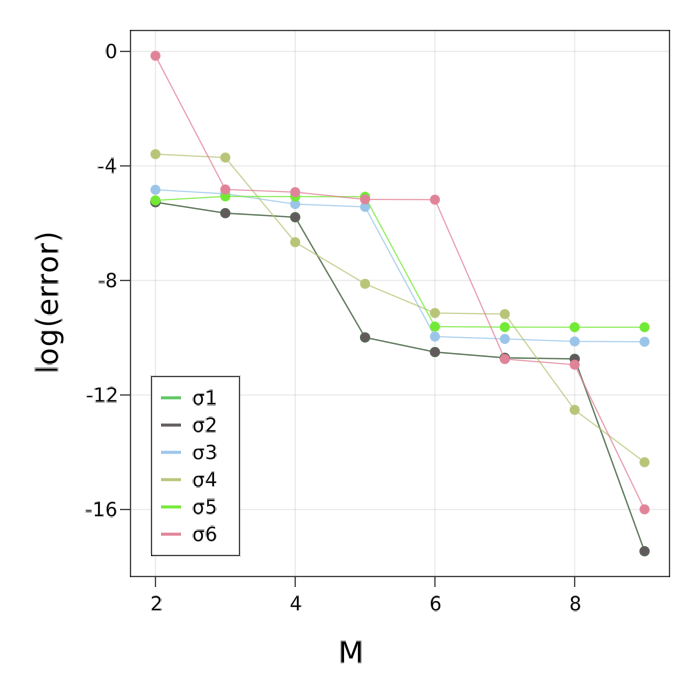} 
    \caption{An illustration of the convergence of the eigenvalues with respect to 
     the number of basis functions for a non-simply connected domain. See Example~\ref{e:ex4}. }
    \label{f:diskconv}
\end{figure}

\begin{ex} \label{e:ex5}
We now consider a geometric convergence study related to Example~\ref{e:Hippopede}; see also Figure~\ref{f:Hippopede}. 
Using the mapping from the unit disk to the
Hippopede domain, $\Omega_\alpha$, we study the limit as $\alpha \to 0$. 
Our computations are performed on the unit disk with non-constant density, $\rho$, as given  in Example~\ref{e:Hippopede}. 
In the limit, the density becomes singular, and the purpose of this example is to illustrate that a weakness of our numerical method is that we cannot accurately compute  eigenvalues of pinched domains ($\alpha \to 0$) or, equivalently, if the density is singular. 
The results are displayed in Table~\ref{t:AlphaConv}. 
The values for the disjoint union of two radius 0.5 disks, obtained in the limit $\alpha \to 0$, are given in the rightmost column of  Table~\ref{t:AlphaConv}. 
We note a very slow convergence of the eigenvalues as $\alpha \to 0$. 
\end{ex}

\begin{table}[t!]
\begin{center}
\begin{tabular}{l | l  l l | l }
$j$ & $\alpha = 0.1$ & $\alpha = 0.06$ & $\alpha = 0.04$ & $\alpha = 0$ \\
\hline
1 & 0.37968380 & 0.32288183 & 0.28797139 & 0  \\
2 & 1.99258587 & 1.99688224 & 1.99338590 & 2 \\
3 & 2.02351398 & 2.00917719 & 1.99906424 & 2 \\
4 & 2.20444005 & 2.66795651 & 2.09627138 & 2 \\
5 & 2.78126086 & 2.66795651 & 2.60980134 & 2 \\
6 & 3.99885096 & 3.99479457 & 3.98132439 & 4\\
7 & 4.09199872 & 4.03602674 & 4.00214005 & 4\\
8 & 4.36831843 & 4.24271684 & 4.18039135 & 4\\
9 & 4.95936215 & 4.80367369 & 4.69676874 & 4\\
10& 6.02510373 & 6.00554908 & 6.01439273 & 6
\end{tabular}
\end{center}
\label{t:AlphaConv}
\caption{The first ten nontrivial Steklov Eigenvalues, $\sigma_j$, of the Hippopede domain, $\Omega_\alpha$, for $\alpha = 0.1$, $0.06$, $0.04$. The last column are the values, known analytically, that appear in the limit as $\alpha \to 0$. }
\end{table}%

\subsection{Optimization methods for extremal Steklov eigenvalues \eqref{e:MaxSteklov2}}
We used gradient-based optimization methods to solve the extremal Steklov eigenvalue problem \eqref{e:MaxSteklov2}. We first describe our parameterization of the boundary 

\subsubsection{Parameterizing the geometry}
Let $\rho \in L^\infty( \partial \Omega_{c,r})$ be the boundary density and 
denote the restriction of $\rho$ to the $i$-th disk boundary by  
$$\rho_i = \rho|_{ \partial D(c_i,r_i)}, 
\qquad \qquad i=1,\ldots,k-1. 
$$
Finally, denote $D_k := D$ and $\rho_k$ the restriction of $\rho$ to $\partial D_k$. 
Thus, if $\Omega_{c,r}$ has $b$ boundary components, the geometry is described by the parameters 
$$
\{c_i\}_{i=1}^{b-1}, 
\qquad 
\{r_i\}_{i=1}^{b-1}, 
\quad \textrm{and} \quad 
\{ \rho_i (x) \}_{i = 1}^b. 
$$
Since $\partial D(c_i,r_i) \cong \mathbb S^1$, we expand each $\rho_i$ in the truncated Fourier series
$$
\rho_i(\theta) = A_{i,0}+ \sum_{\ell=0}^N A_{i,\ell} \cos( \ell \theta) + B_{i,\ell} \sin(\ell \theta), \qquad \theta \in [0,2\pi].
$$

From Remark~\ref{r:dim}, it would be possible to  center one of the holes at the origin and another on the positive $x$-axis. However,  we found that the representation of the boundary density $\rho$ for finite basis size (finite $N$) was better without fixing these centers. 

\subsubsection{Gradient based optimization methods}
As in \cite{Akhmetgaliyev_2017}, to handle multiple eigenvalues, we trivially transform \eqref{e:MaxSteklov2} into the following problem 
\begin{subequations}
\label{e:MaxSteklov3} 
\begin{align}
\max \ &  t \\
\textrm{s.t.} \ & t \leq  \sigma_i L && i = j, j+1, \ldots,   
\end{align}
We approximated  the positivity constraint $\rho \geq 0$ by imposing the positivity 
 on all $L$ sample points, 
 \begin{equation} 
 \rho(p_\ell) \qquad \qquad \ell =1,\ldots, L. 
 \end{equation}
 This approximation leads to linear inequalities 
 with respect to the coefficients $(A_{i,l}, B_{i,l})$ only.
We also augment the previous optimization problem with the geometrical constraints 
in  \eqref{e:MaxSteklov2} by imposing the  (few) quadratic constraints on 
the variables $(c_i, r_i)_{1 \leq i \leq k - 1}$: 
    \begin{align}
    \label{e:geomconstra}
    &    |c_i|^2   < (1- r_i)^2 && i  = 1,\ldots, k-1, \\
    \label{e:geomconstrb}
   &     |c_i - c_j| ^ 2 > (r_i + r_j ) ^2 && i, \ j = 1,\ldots, k-1, \, j \neq i. 
    \end{align}
\end{subequations}
Using the derivatives computed in \eqref{e:Drho}, \eqref{e:Dr}, and \eqref{e:Dc}, 
together with the interior point method implemented in \cite{KNITRO_2006}, we solved
 \eqref{e:MaxSteklov3}.
 All results of section  \ref{s:NumResults}, have been obtained with the following 
 parameters: $M= 30$ (maximal order of basis elements), $L=10^4$ (number of sampling points)
  and at most $5,000$  iterations to reach a first order optimality condition 
  criteria  to a relative precision of $10^{-6}$. Observe that in all cases, we were 
   able to recover the multiplicity three of the optimal eigenvalue up to $6$ digits.

 In our implementation, the computational cost is proportional to the number of connected components of the boundary. 
 For instance, one hour of computation on a standard laptop was required to obtain  the desired precision for three boundary components.

\subsection{Computing the free boundary minimal surface from the Steklov eigenfunctions}
At this point we assume that we have successfully solved the extremal  Steklov problem \eqref{e:MaxSteklov2} and want to use Theorem~\ref{t:Fraser} to compute the associated free boundary minimal surface using the Steklov eigenfunctions. 

Let $\sigma$  denote the optimal eigenvalue and assume that it has multiplicity $n$. 
Define the mapping 
$v = [v_1, \ldots, v_n] \colon \Omega \to \mathbb R^n$, 
where $ \{ v_i \}_{i=1}^n$ is some choice of basis for the  $n$-dimensional eigenspace. 
 For $A \in \mathbb R^n$, we consider the map 
 $ u_A \colon \Omega \to R^n$, 
 defined by 
 $$
 u_A(x) = \left[v_1(x), \ldots, v_n(x) \right] A,
 \qquad \qquad x \in \Omega. 
 $$ 
We want to identify the matrix $A$ so that the map $u_A = u  =  [u_1, \ldots, u_n]$ satisfies the spherical and the isothermal coordinate conditions, 
\begin{subequations}
\label{e:IsoCoord}
 \begin{align}
& |\partial_r u(r,\theta) |^2 =  r^{-2} | \partial_\theta u(r,\theta) |^2, 
&& \forall (r,\theta) \in \Omega_{r,c} \\
& \partial_r u(r,\theta) \cdot r^{-1} \partial_\theta u(r,\theta)  = 0, 
&& \forall (r,\theta) \in \Omega_{r,c}.
\end{align}
\end{subequations}
To identify the matrix $A$, so that $u_A \colon \Omega \to \mathbb R^n$ satisfies \eqref{e:IsoCoord},  we construct the objective function 
\begin{equation} 
\label{e:EvolEq}
J(A) = \int_{\partial \Omega} W( u_A(x) ) \ dx + 
\int_{\Omega}  \left( |\partial_r u_A(r,\theta) |^2 -   r^{-2} | \partial_\theta u_A(r,\theta) |^2 \right) ^2 + | \partial_r u_A(r,\theta) \cdot r^{-1} \partial_\theta u_A(r,\theta) |^2 \ dx,
\end{equation}
 where $W(u) = \frac{1}{4}(|u|^2-1)^2$. We then minimize $J(A)$ over $A \in \mathbb R^{n \times n}$. 
   In all experiments in section \ref{s:NumResults},  using this selection process, we were able 
   to obtain three eigenfunctions which take values in the sphere on $ \partial \Omega$
  to an absolute pointwise error bounded by $10^{-3}$. Moreover, since we have a parameterization of the surface, using the well-known analytic formula, we were  able to 
   compute the mean curvature of the  surfaces, which 
   in all cases was bounded by $10^{-2}$. The mean curvature and the Gaussian curvature are plotted on the free boundary minimal surface at \cite{EdouardWebpage}. 
Additionally, the angle that the boundary makes with the normal vector to the sphere is less than one degree.


\begin{figure}
    \centering
    \includegraphics[width=\textwidth]{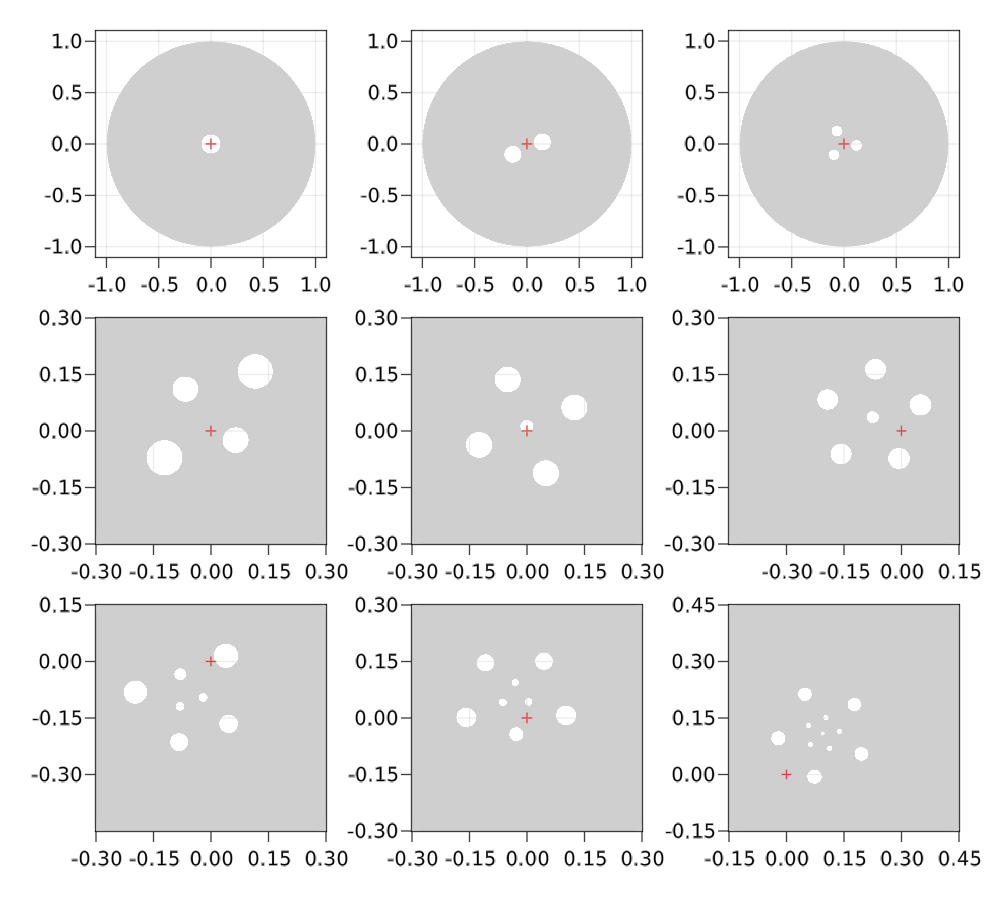} 
    \caption{Optimal disks configurations for $2$ to $9$ and $12$ (last bottom right picture) 
             connected components of the boundary. The red cross indicates the center 
             of the unit disk.}
    \label{f:alldisks}
\end{figure}

\begin{figure}
    \centering
    \includegraphics[width=\textwidth]{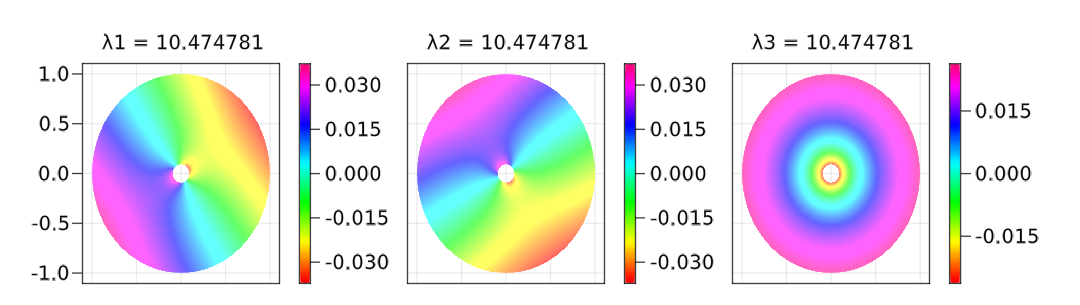} 
    \includegraphics[width=\textwidth]{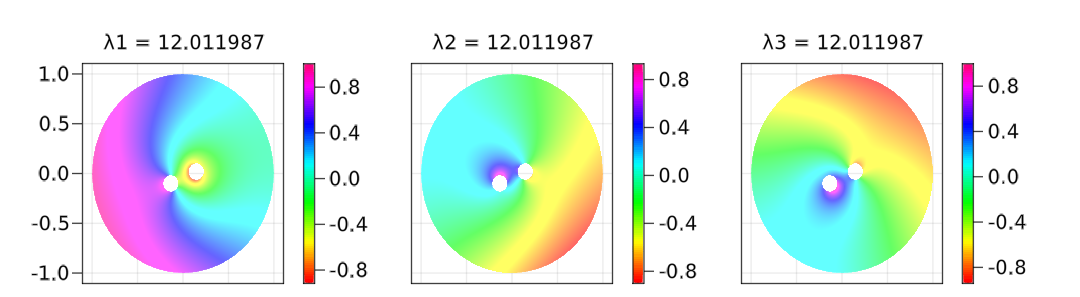} 
    \caption{Three linearly independent eigenfunctions associated to the first eigenvalue for 
             two and three boundary components.}
    \label{f:eigs23}
\end{figure}

\begin{figure}
    \centering
    \includegraphics[width=\textwidth]{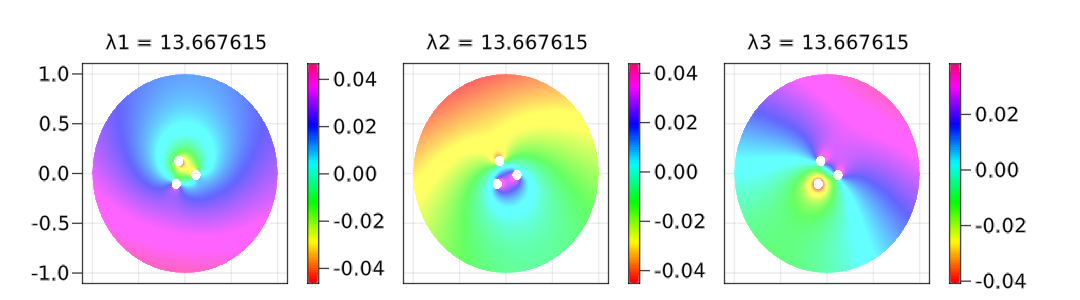} 
    \includegraphics[width=\textwidth]{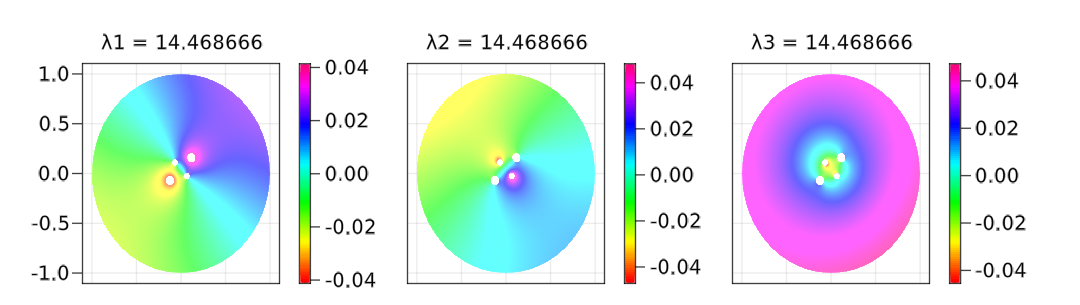} 
    \caption{Three linearly independent eigenfunctions associated to the first eigenvalue for 
             four and five boundary components.}
    \label{f:eigs45}
\end{figure}

\begin{figure}
    \centering
    \includegraphics[height=0.3\textwidth]{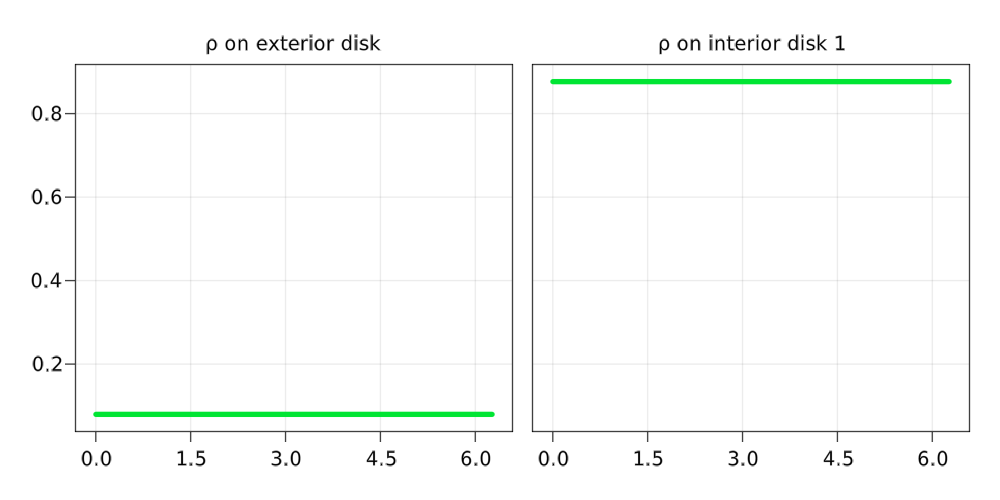} 
    \includegraphics[height=0.3\textwidth]{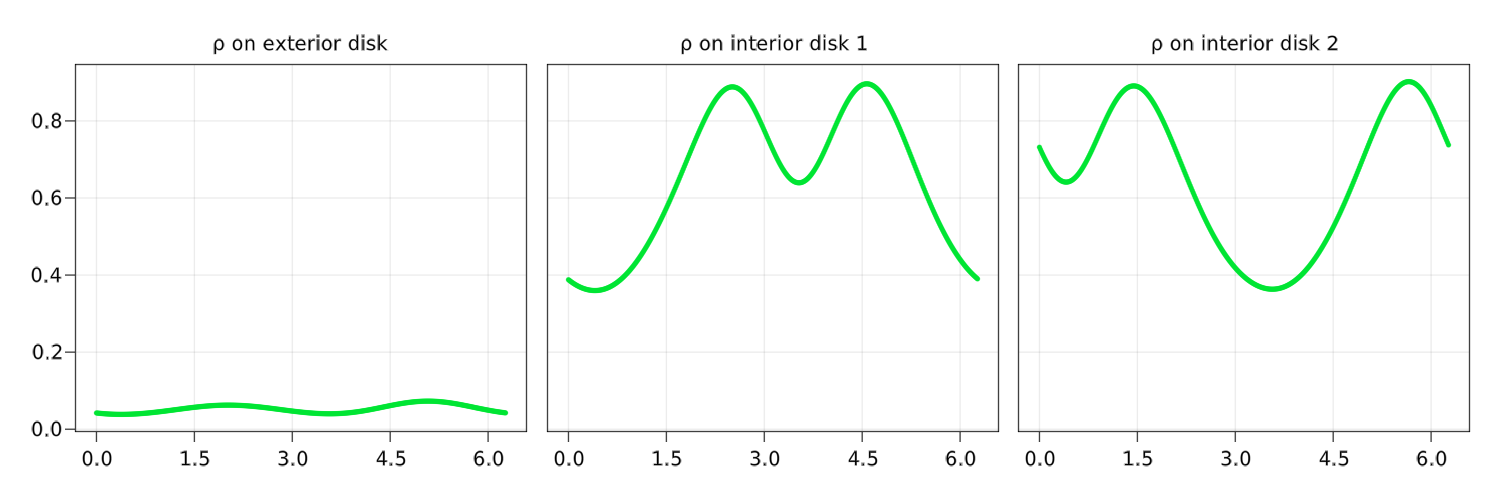} 
    \caption{Optimal densities for two and three boundary components.}
    \label{f:rho23}
\end{figure}

\begin{figure}
    \centering
    \includegraphics[height=0.6\textwidth]{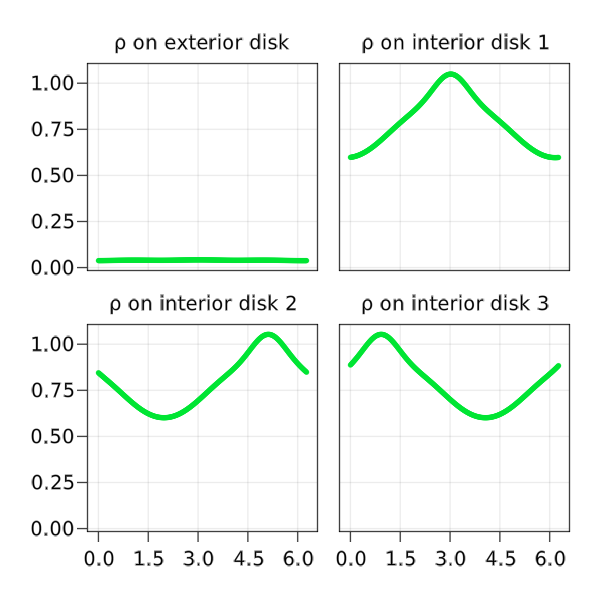} 
    \caption{Optimal densities for four boundary components.}
    \label{f:rho4}
\end{figure}

\begin{figure}
    \centering
    \includegraphics[height=0.6\textwidth]{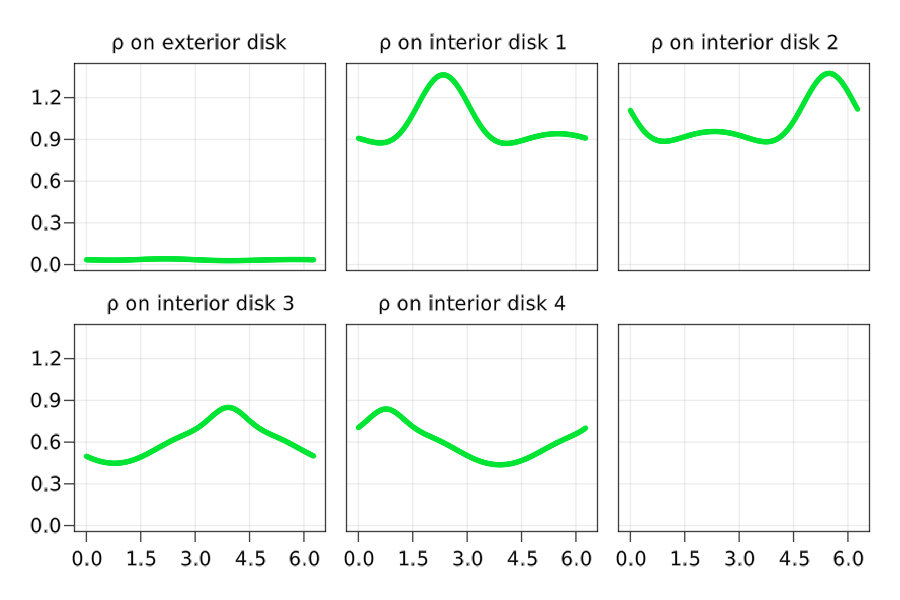} 
    \caption{Optimal densities for five boundary components. }
    \label{f:rho5}
\end{figure}

\begin{figure}
    \centering
    \includegraphics[width=0.3\textwidth]{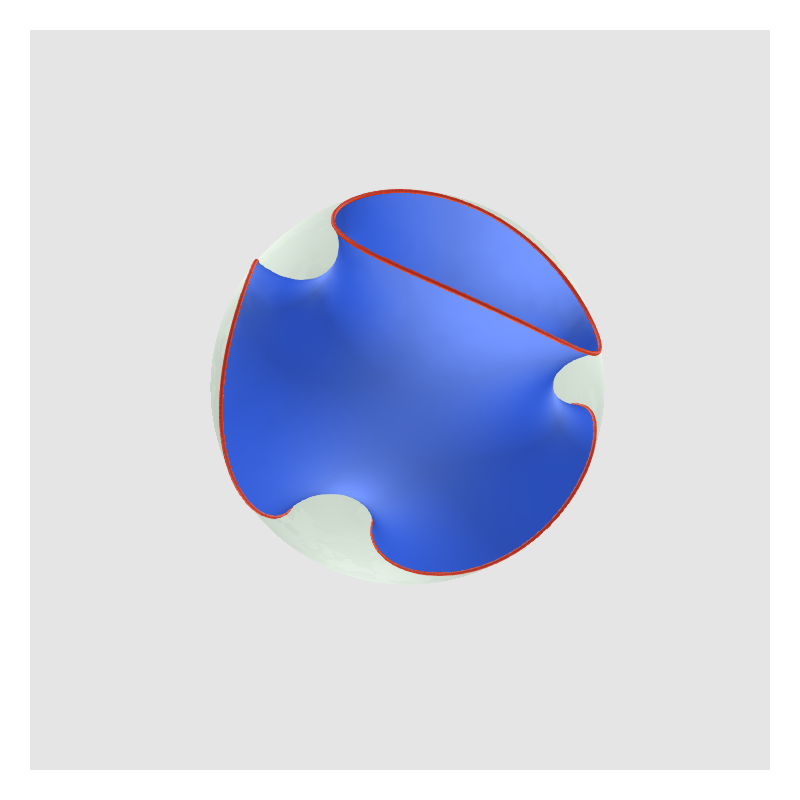} 
    \includegraphics[width=0.3\textwidth]{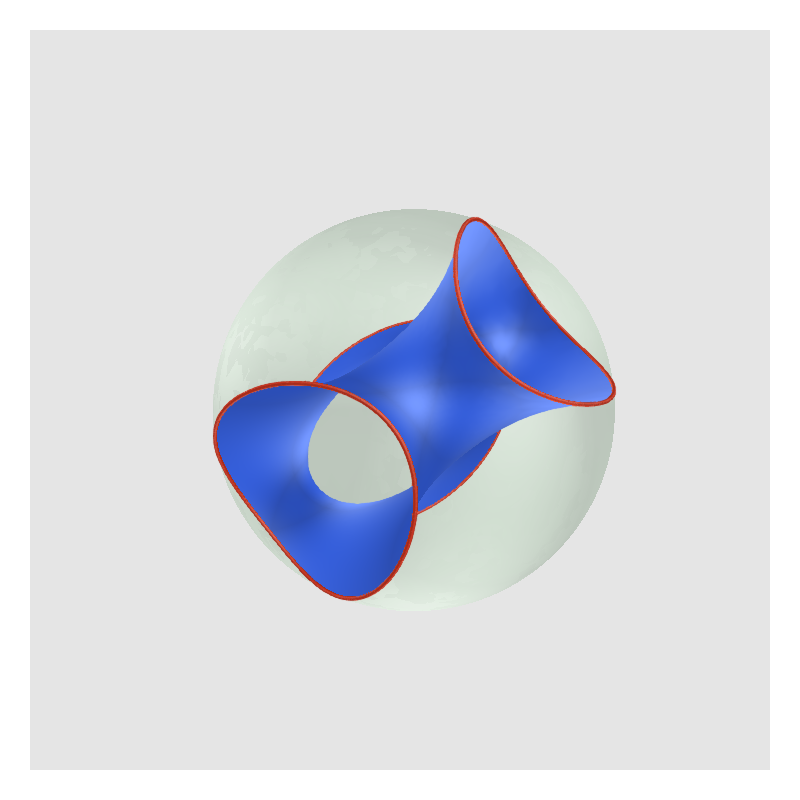} 
    \includegraphics[width=0.3\textwidth]{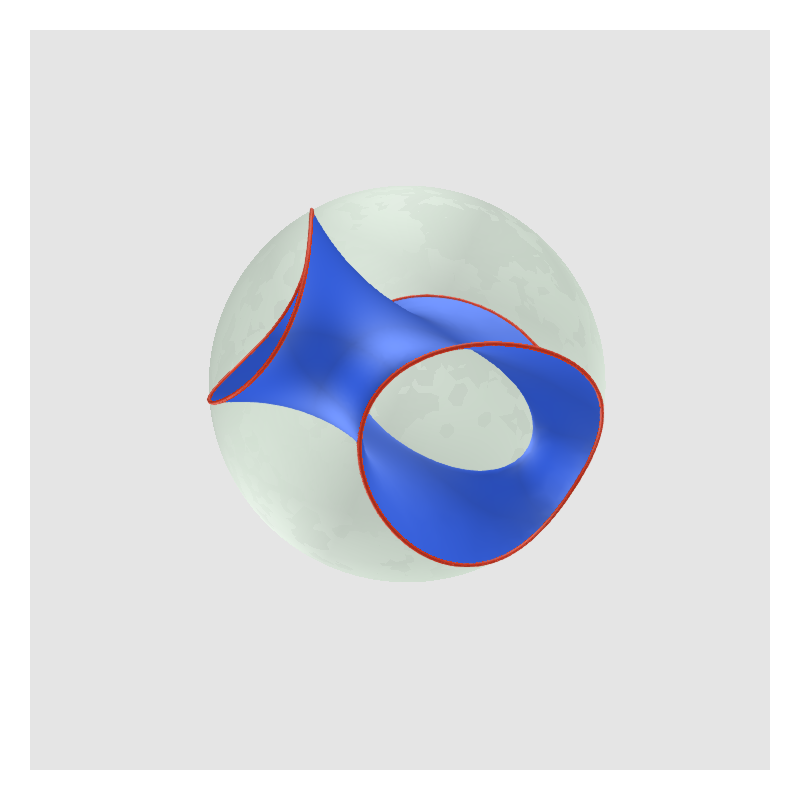}  \\
    \includegraphics[width=0.3\textwidth]{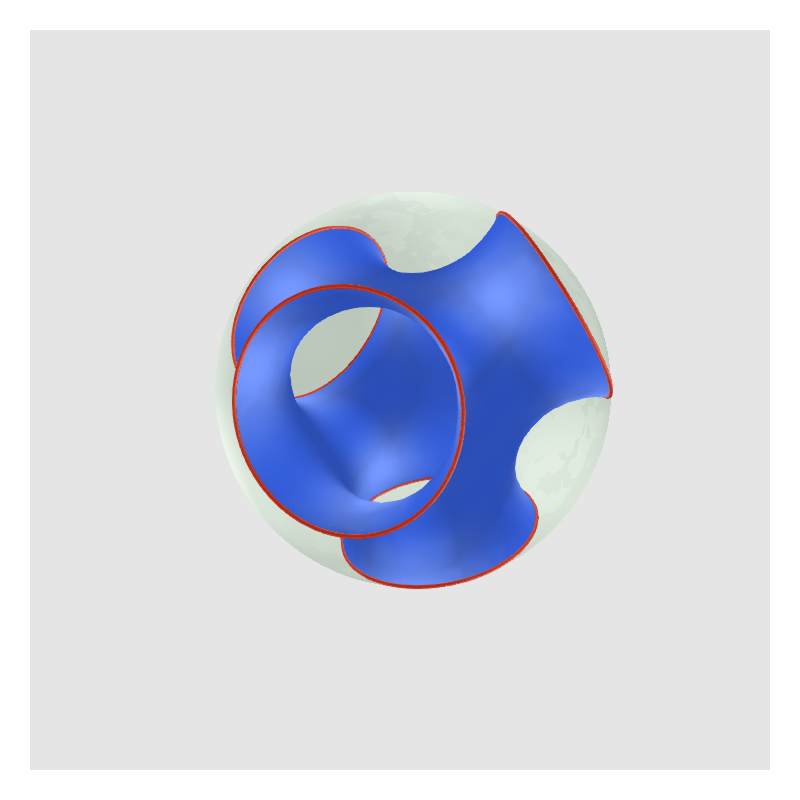} 
    \includegraphics[width=0.3\textwidth]{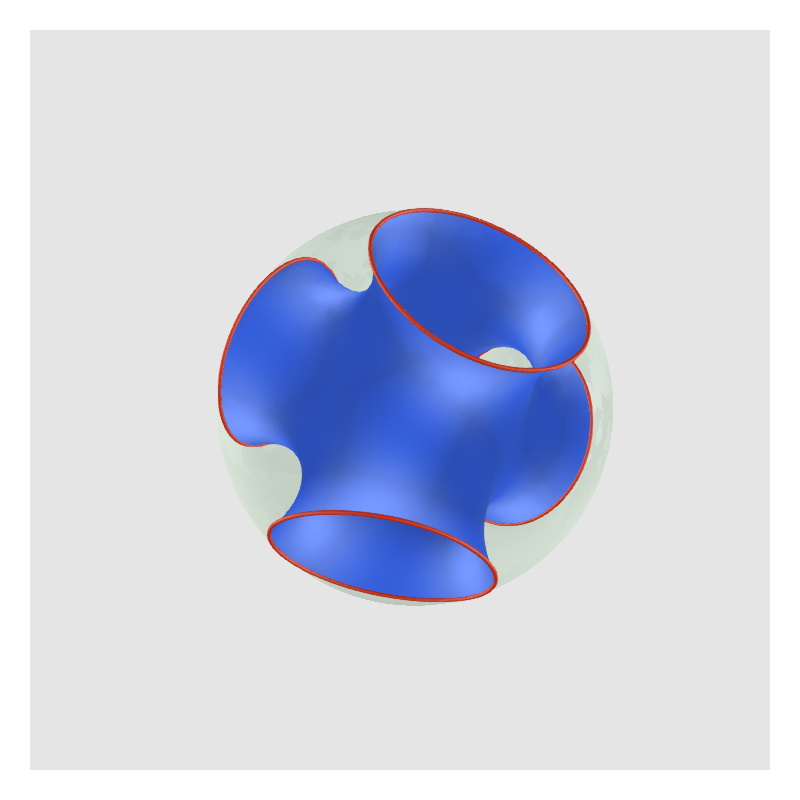} 
    \includegraphics[width=0.3\textwidth]{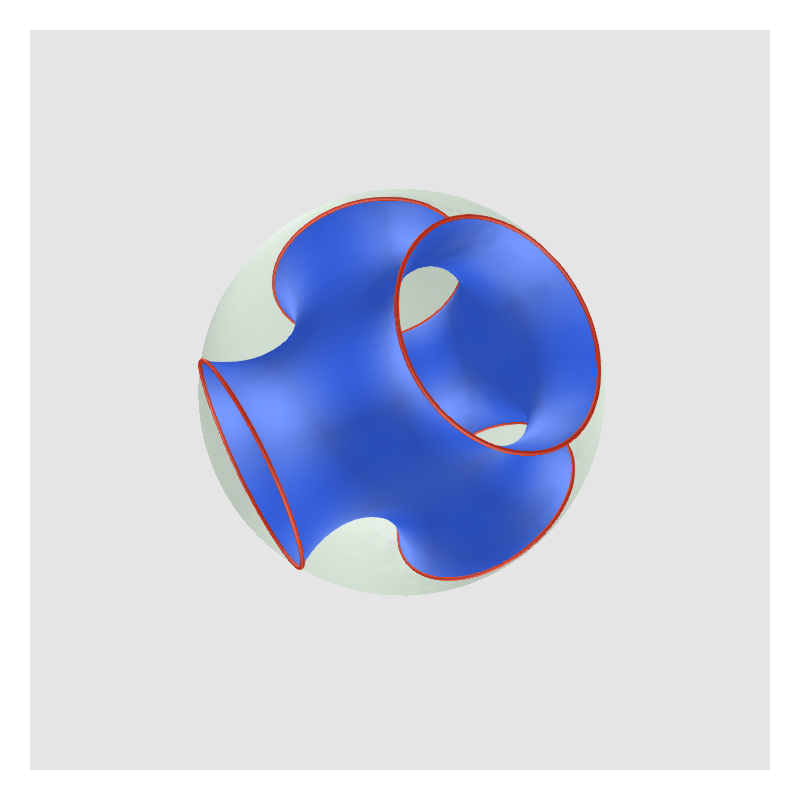}  
    \caption{Approximation of a minimal surface in the ball with three and four connected 
    components of the boundary.}
    \label{f:surf34}
\end{figure}

\begin{figure}
    \centering
    \includegraphics[width=0.3\textwidth]{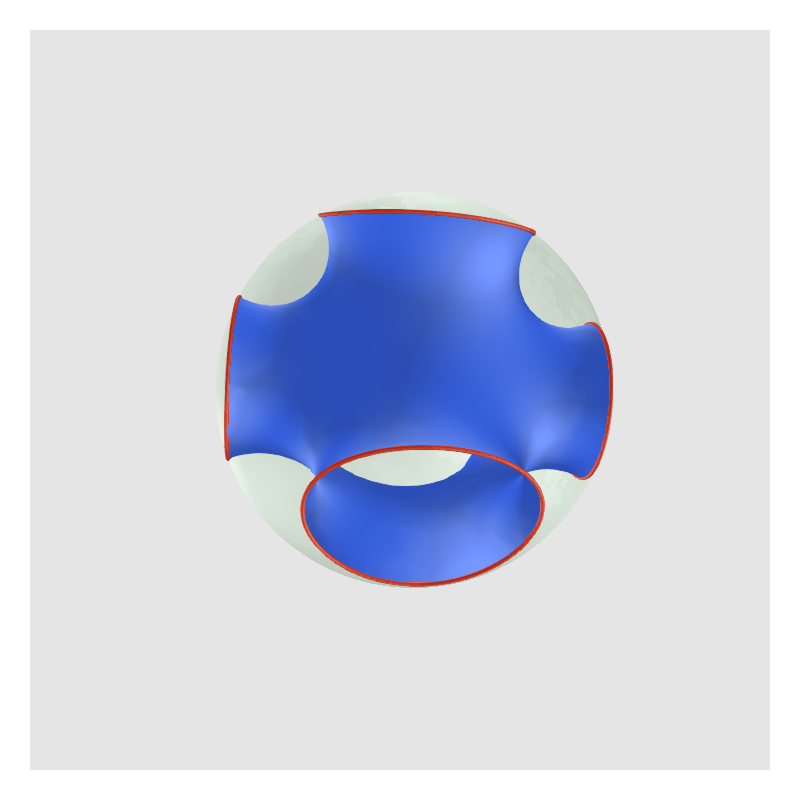} 
    \includegraphics[width=0.3\textwidth]{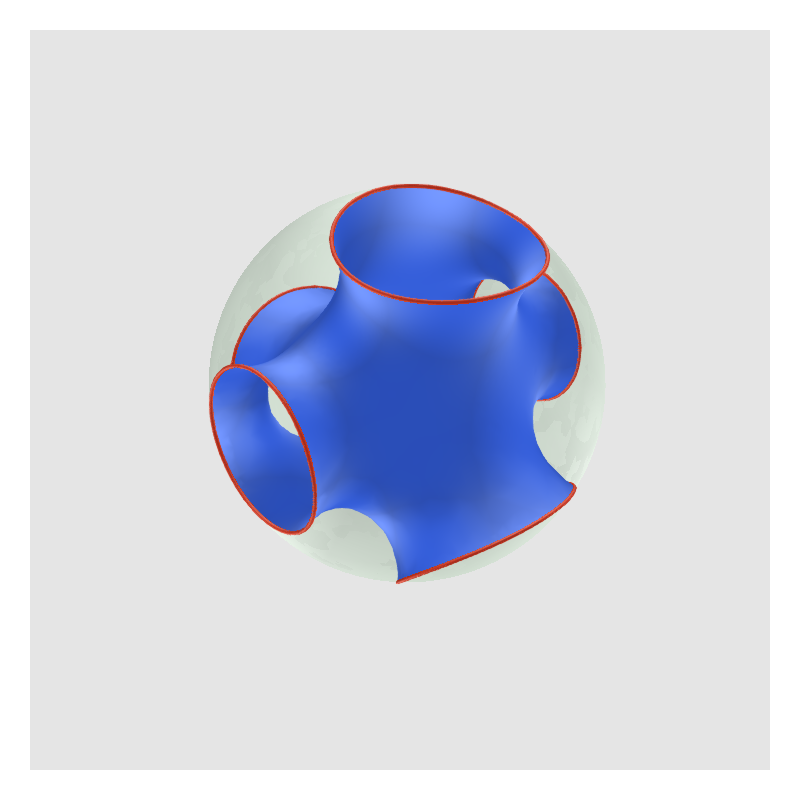} 
    \includegraphics[width=0.3\textwidth]{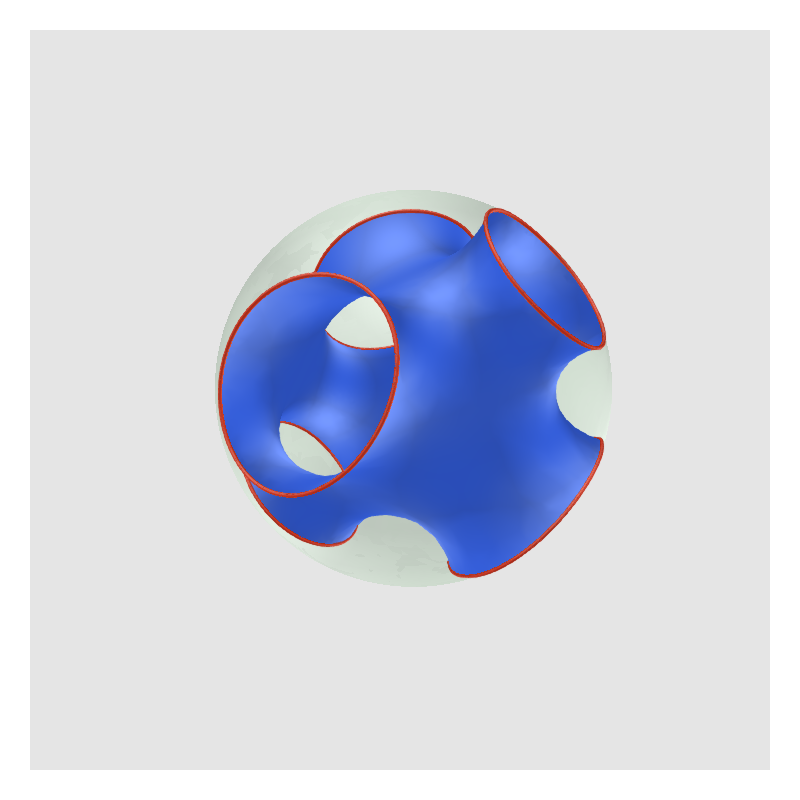}  \\
    \includegraphics[width=0.3\textwidth]{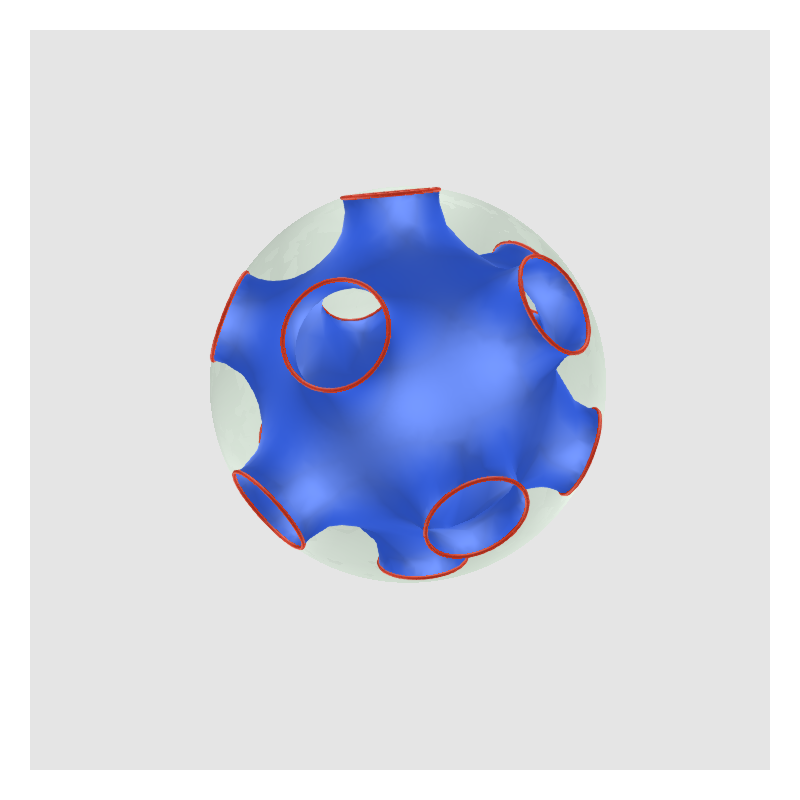} 
    \includegraphics[width=0.3\textwidth]{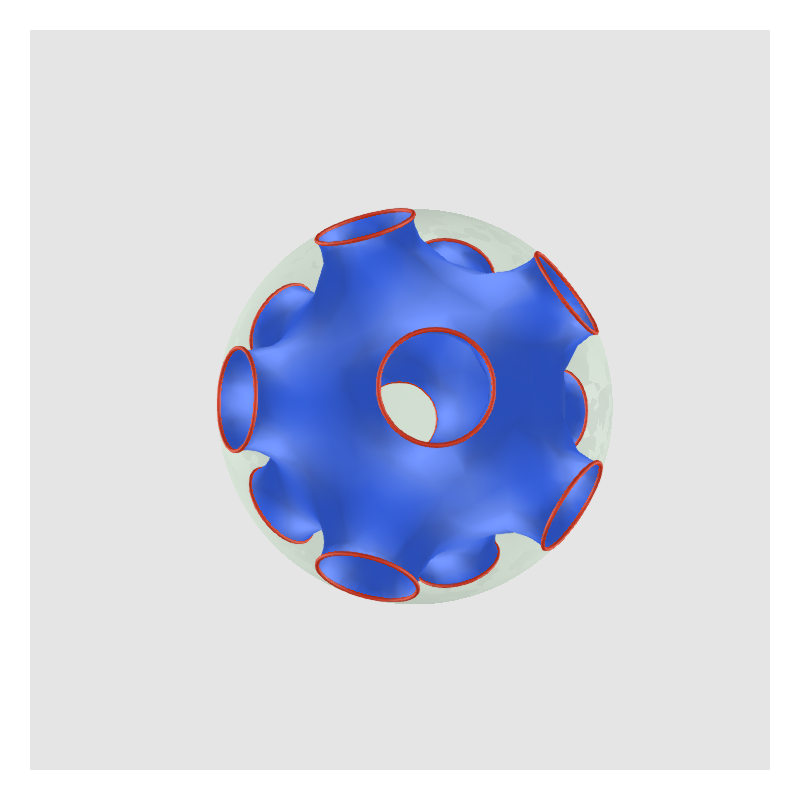} 
    \includegraphics[width=0.3\textwidth]{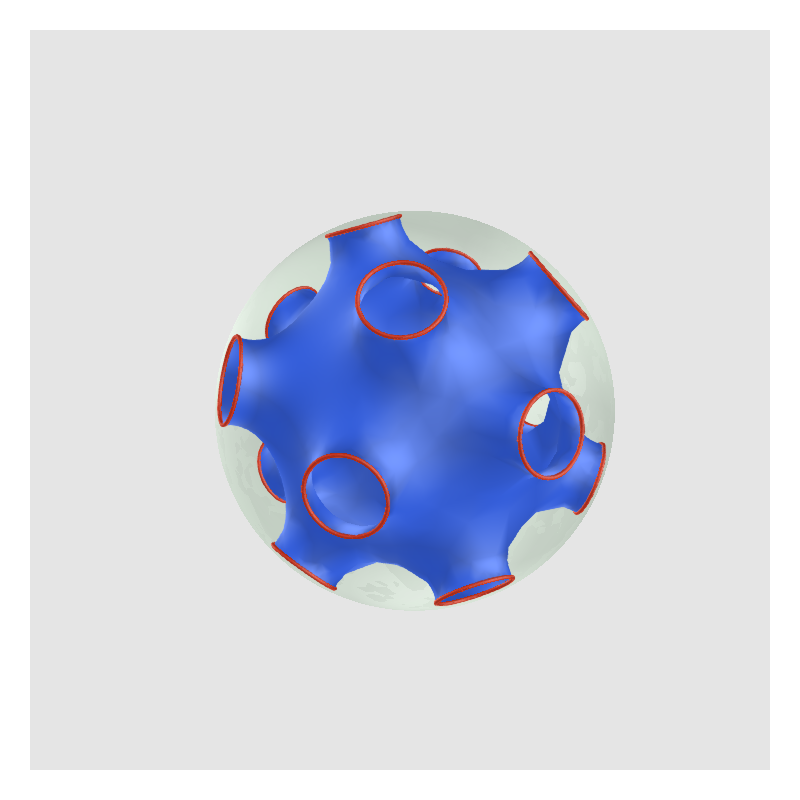}  
    \caption{Approximation of a minimal surface in the ball with five (first row),
     twelve (second row, two first views) and fifteen connected components of 
     the boundary.} 
    \label{f:surf51215}
\end{figure}

\begin{figure}
    \centering
    \includegraphics[width=0.3\textwidth]{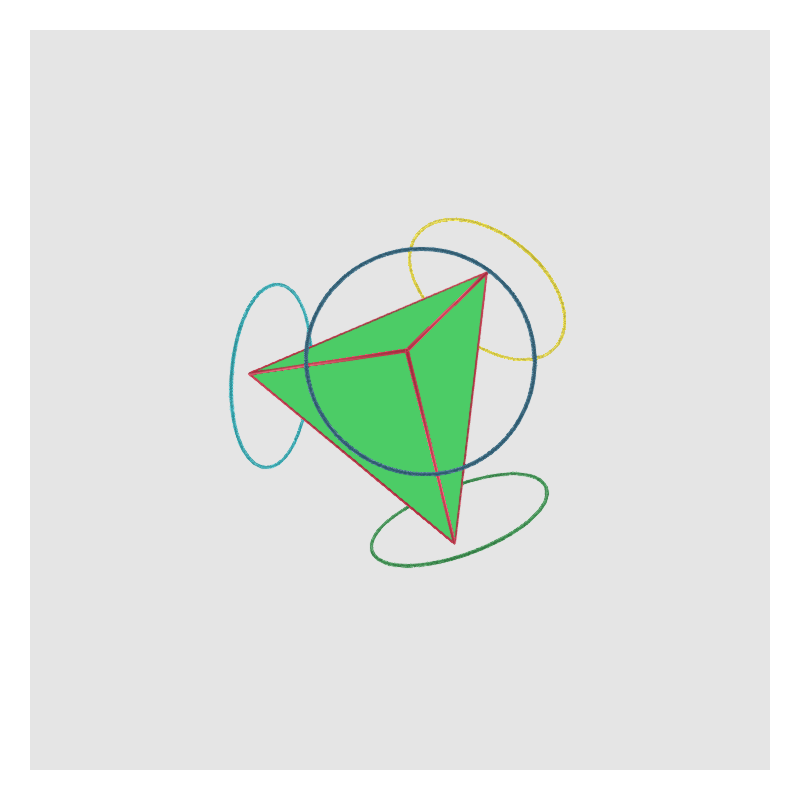} 
    \includegraphics[width=0.3\textwidth]{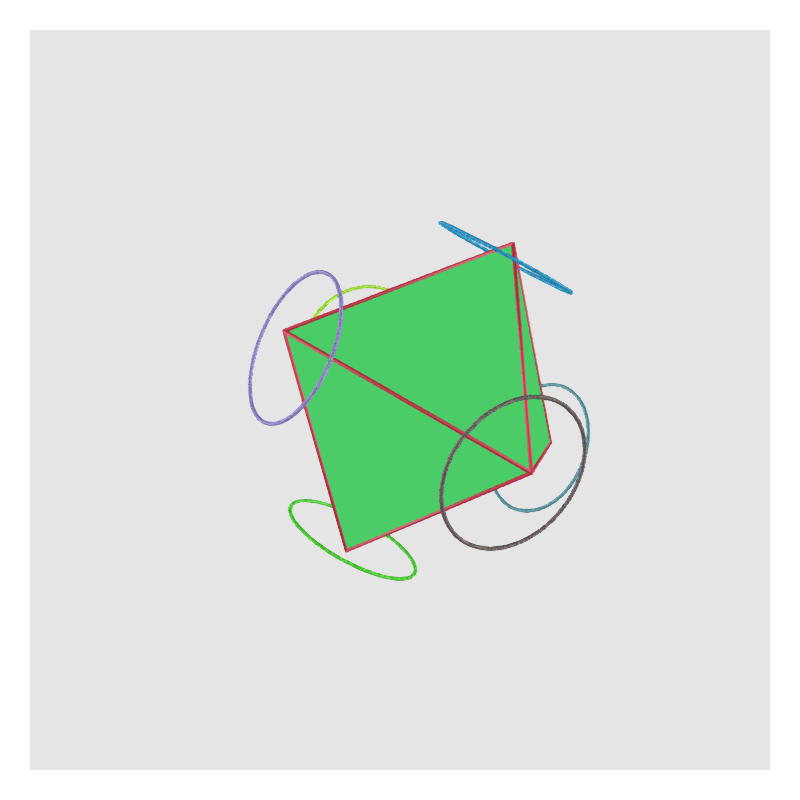} 
    \includegraphics[width=0.3\textwidth]{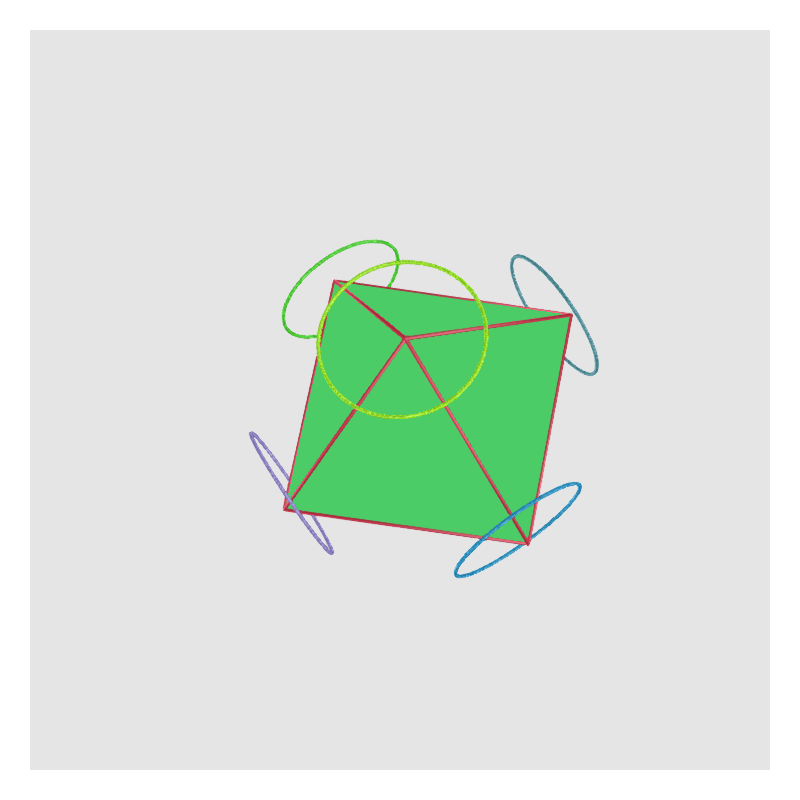}\\    
    \includegraphics[width=0.3\textwidth]{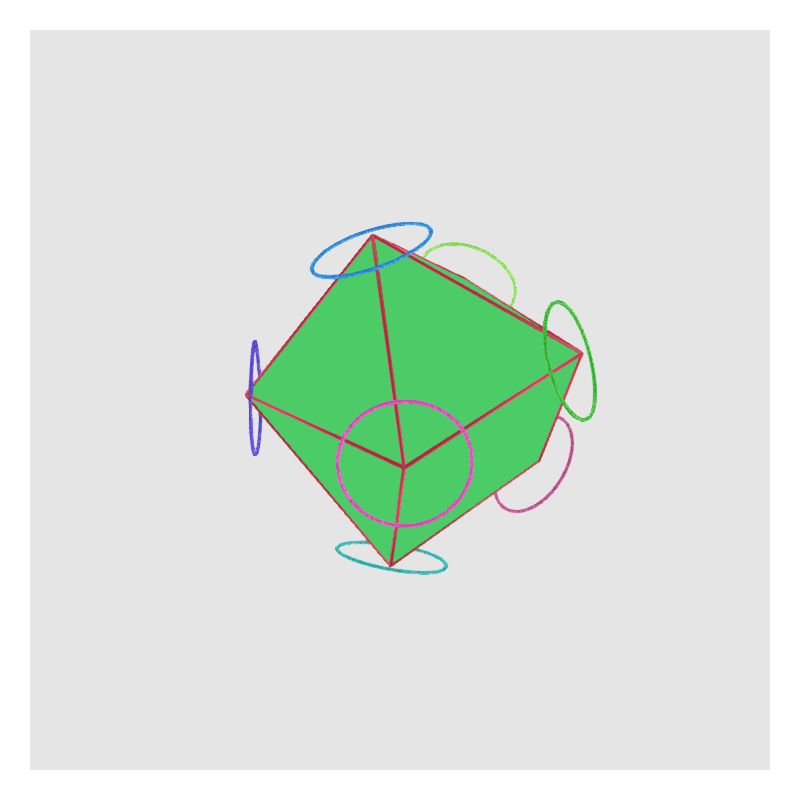} 
    \includegraphics[width=0.3\textwidth]{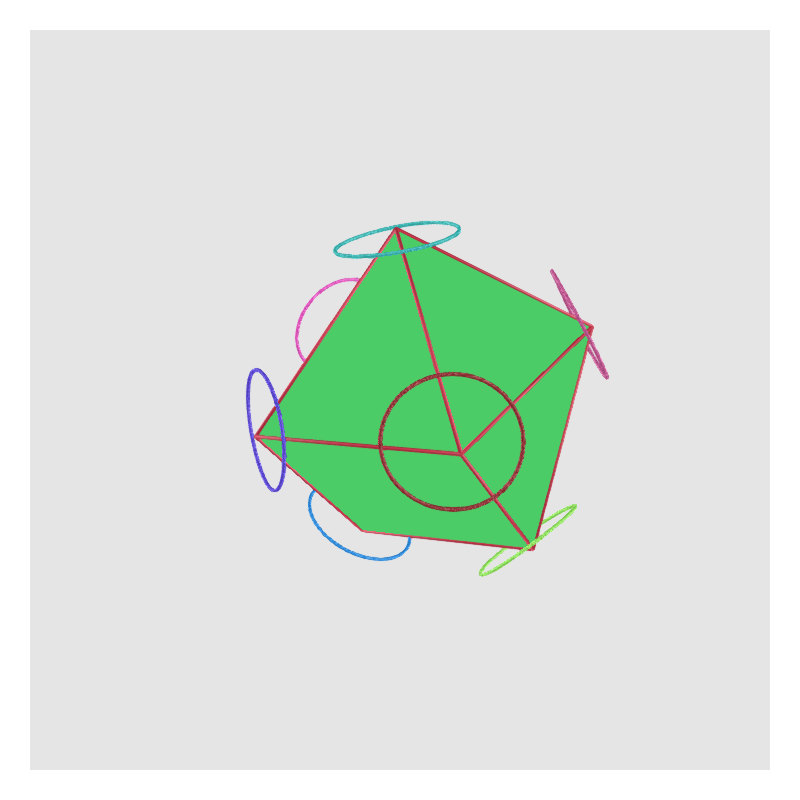} 
    \includegraphics[width=0.3\textwidth]{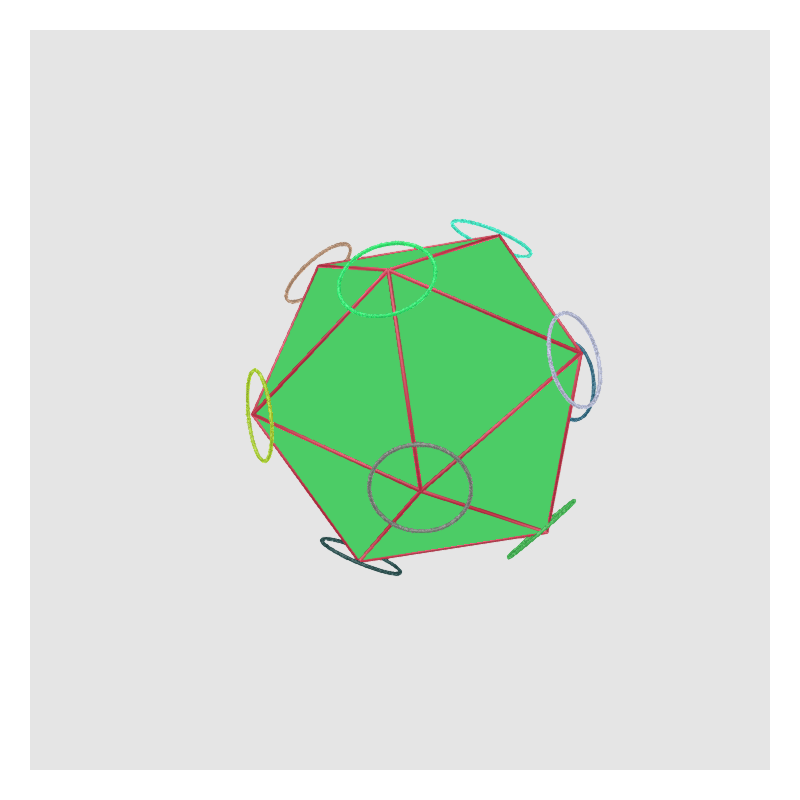}
    \caption{Convex polytopes associated to the center of mass of boundary connected 
    components of minimal surfaces in the ball. {\bf First row.} Four (first plot) 
    and six boundary connected components (the two remaining plots). 
    {\bf Second row.} Two views of a square antiprism associated to a
    minimal surface with a boundary made of height connected components
    and an icosahedron associated to a minimal surface with 
     twelve connected components in its boundary (last plot).}
    \label{f:platonic}

\end{figure}

\begin{figure}
    \centering
    \includegraphics[width=\textwidth]{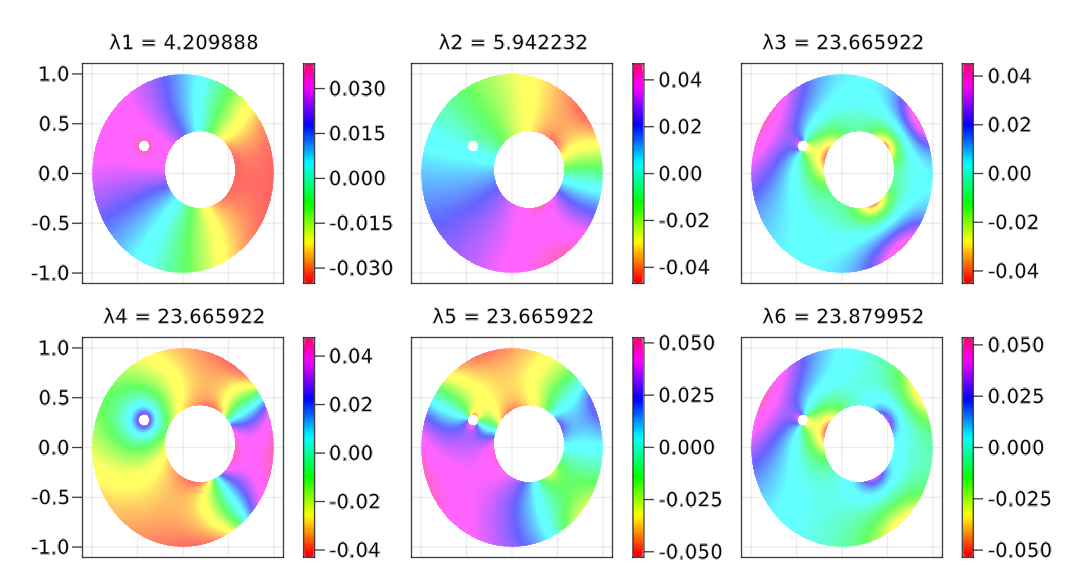} 
    \caption{Six linearly independent eigenfunctions associated to the third eigenvalue for 
             three boundary components.}
    \label{f:eigs3lambda3}
\end{figure}

\begin{figure}
    \centering
    \includegraphics[width=\textwidth]{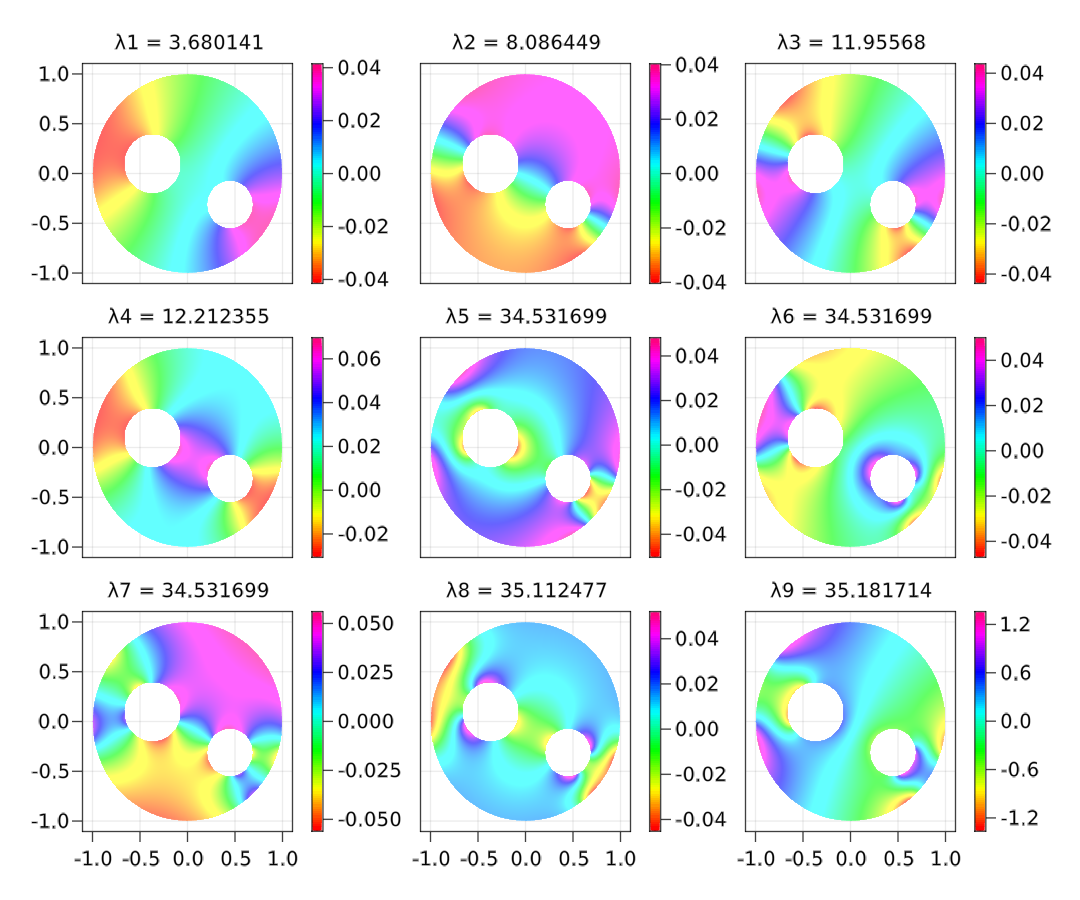} 
    \caption{Nine first linearly independent eigenfunctions associated to the fifth 
             eigenvalue for three boundary components.}
    \label{f:eigs3lambda5}
\end{figure}

\begin{figure}
    \centering
    \includegraphics[width=0.3\textwidth]{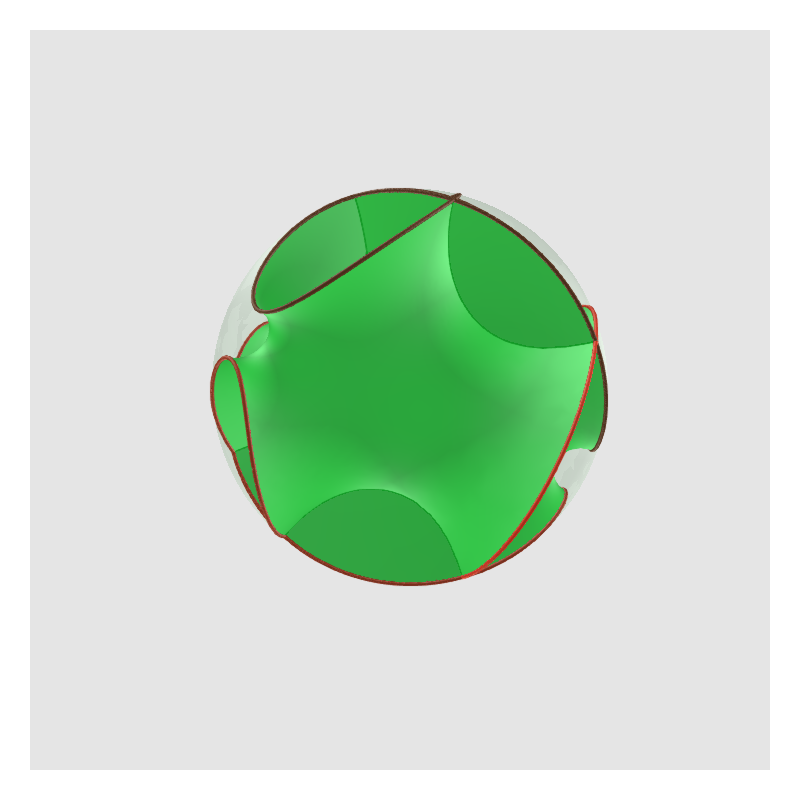} 
    \includegraphics[width=0.3\textwidth]{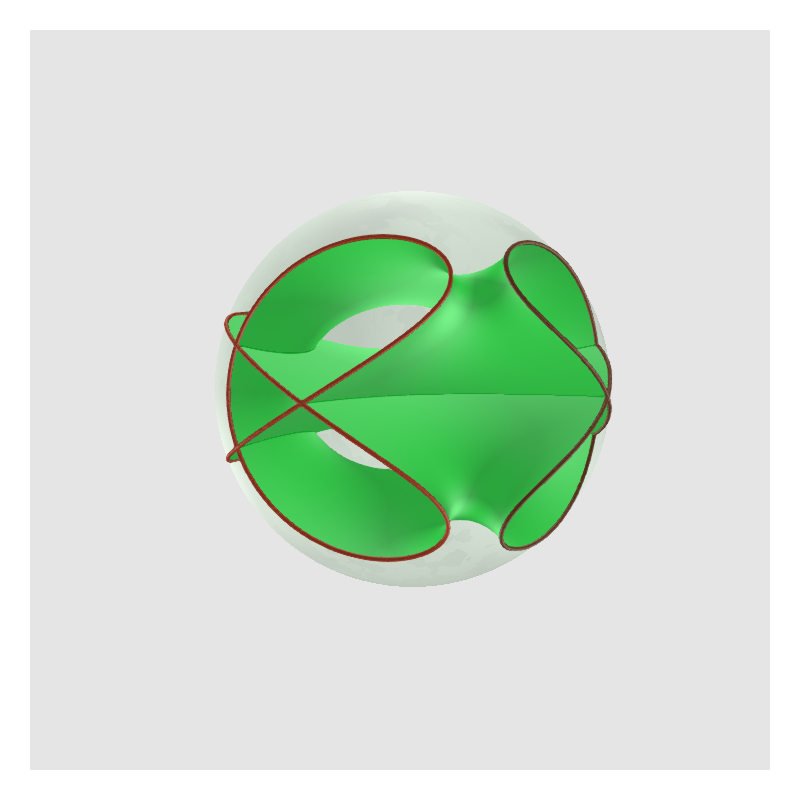} 
    \includegraphics[width=0.3\textwidth]{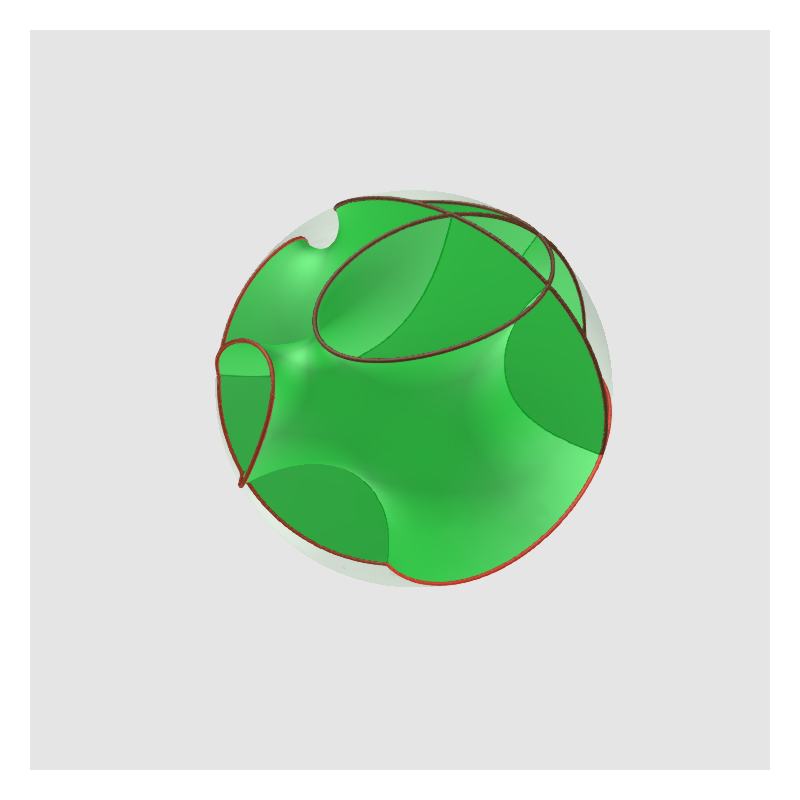}  \\
    \includegraphics[width=0.3\textwidth]{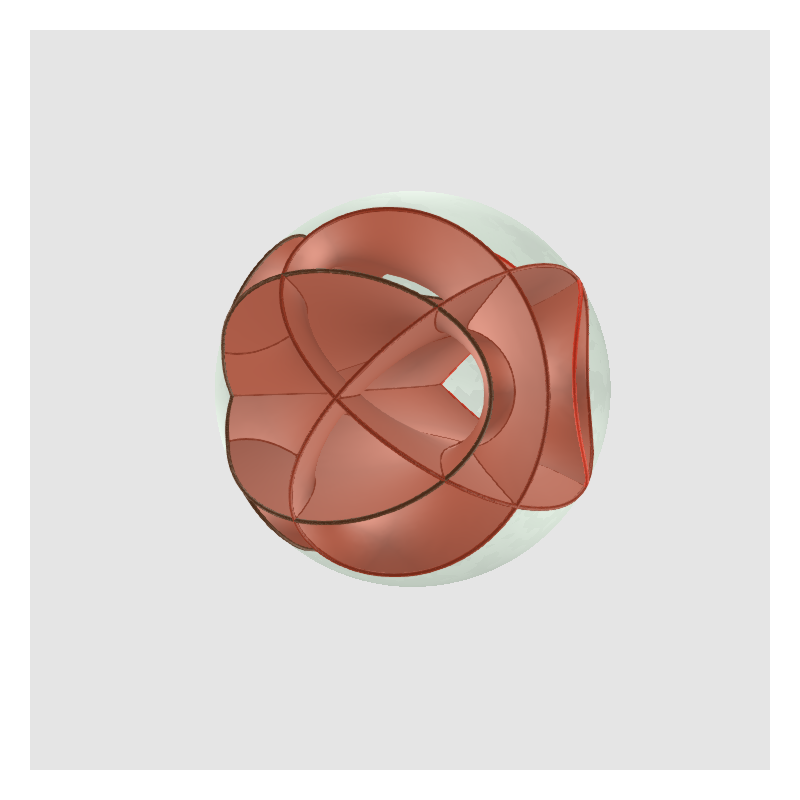} 
    \includegraphics[width=0.3\textwidth]{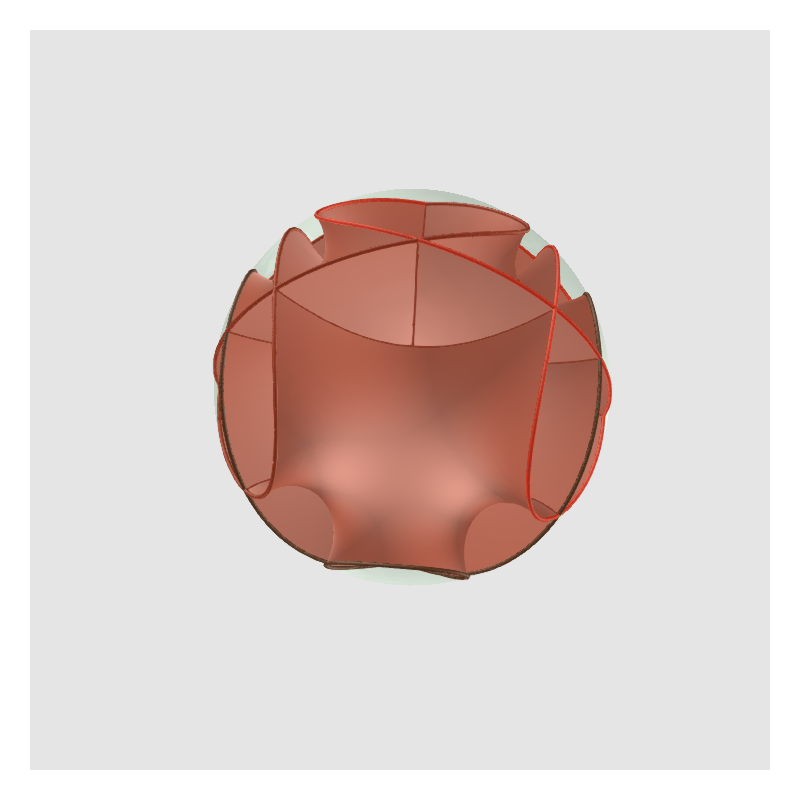} 
    \includegraphics[width=0.3\textwidth]{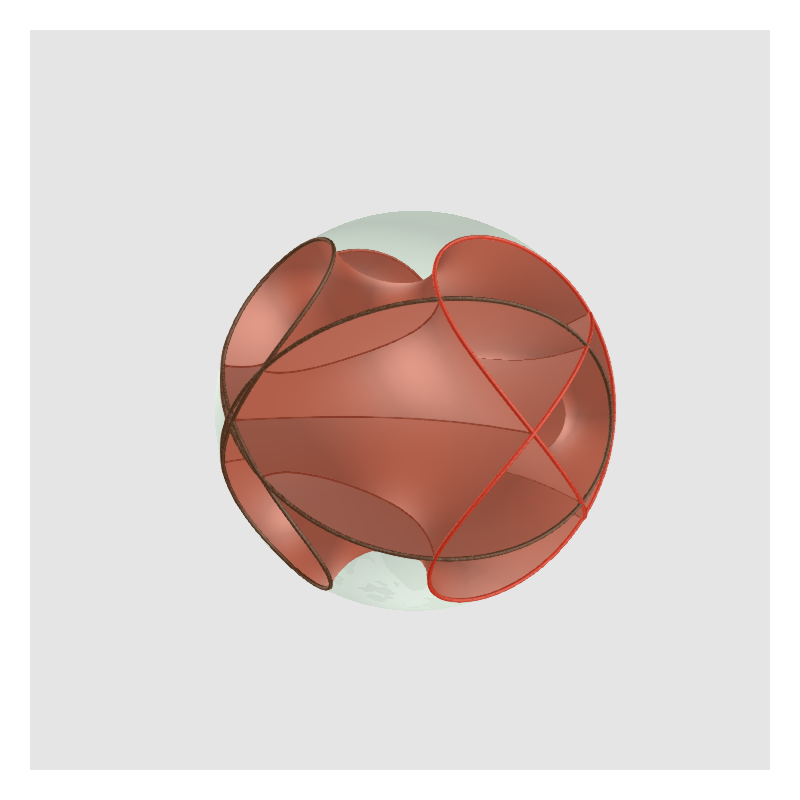}  

    \caption{Two distinct approximations of a minimal surface in the ball with 
     three connected components of the boundary associated to the third and fifth 
     Steklov eigenvalues.}
    \label{f:surflambda35}
\end{figure}

\begin{table}[t]
\begin{center}
\begin{tabular}{ l | c | c | c }
b & $\tilde \sigma_{1}$ & compare to \cite{Girouard2020} & BC center configuration \\
\hline 
2 & $10.4748$ &  & (critical catenoid) Digon \\
3 & $12.0120$ &  & equilateral triangle \\
4& $13.6676$ & $4.3505\pi\approx13.6675$ & regular tetrahedron \\
5 & $14.4687$ &  & triangular bipyramid \\
6 & $15.4292$ & $4.9099\pi\approx15.4249$ & regular octahedron \\ 
7 & $15.9520$ &  & pentagonal bipyramid \\ 
8& $16.4954$ & $5.2282\pi\approx16.4249^*$ & square antiprism (not regular) \\
9& $16.9707$ &  & triaugmented triangular prism \\
12& $18.0687$ & $5.7514\pi\approx18.0686$ & regular icosahedron \\
15 & $18.7934$ &  &  triangular symmetry \\
20 &  $19.7076$ & $6.2299\pi\approx19.5718^*$ & irregular, not dodecahedron
\end{tabular}
\end{center}
\label{t:valComp1}
\caption{For different number of boundary components $b$, we report 
the value of  the first nontrivial normalized Steklov eigenvalue $\tilde \sigma_1 = \sigma_1 L$, 
the value obtained by \cite{Girouard2020}, 
and the configuration of the centers of the boundary components. 
For $b=8$ and $b=20$, our configuration of boundary components differs from \cite{Girouard2020}, so the values should not be directly compared (indicated with an asterisk). }
\end{table}%

\section{Numerical solutions of the extremal Steklov eigenvalue problem and the corresponding free boundary minimal surfaces} \label{s:NumResults}
In this section, we describe the solutions for the extremal Steklov eigenvalue problem \eqref{e:MaxSteklov2}, 
for various number of boundary components (BC), $b$, and eigenvalue number, $j$,
and the  corresponding free boundary minimal surfaces (FBMS). 

\subsection{First nontrivial eigenvalue ($k=1$)}
We first consider the first nontrivial eigenvalue ($k=1$) for varying numbers of  BC, $b=2,\ldots,9, 12, 15, 20$. 
In each case, the multiplicity of the extremal eigenvalue is three, as expected \cite{Fraser_2015}. 
In Figure~\ref{f:alldisks}, we plot the optimal punctured disks, $\Omega_{c,r}$, for $b=2,\ldots,9$ and $b=12$ BC. 
In Figures~\ref{f:eigs23} and \ref{f:eigs45}, we plot three linearly independent eigenfunctions associated to the first eigenvalue on their respective punctured disk for $b=2,3,4,5$ BC.
For these values of $b$, the corresponding optimal densities are plotted in Figures~\ref{f:rho23}, \ref{f:rho4}, and \ref{f:rho5}.
In Figures~\ref{f:surf34} and \ref{f:surf51215}, we plot the corresponding (approximate) FBMS in the ball for $b=3,4,5,12,15$ BC.
In all cases, the BC of the FBMS are positioned at very symmetric locations, as further illustrated in Figure~\ref{f:platonic}. 
Values of $\tilde \sigma_{1}$ and additional information about these configurations are recorded in Table~\ref{t:valComp1}.
Additional figures, including gifs, can be found at \cite{EdouardWebpage} and were not included here for brevity. 

We now make a few more detailed remarks for the problem with the various number of  BC,  $b$, considered, 
especially for values of $b$ that are related to the platonic solids. 
For some values of $b$, we also compare to the FBMS discussed in \cite{Girouard2020}. 

For $b=2$, we recover the critical catenoid, the known FBMS \cite{Fraser_2015} that we also discussed in Section~\ref{s:CC}. 
Note that in Figure~\ref{f:alldisks} the hole is centered within the disk and in Figure~\ref{f:rho23}, the density is constant on each BC. 
The eigenfunctions plotted in Figure~\ref{f:eigs23} exhibit symmetries and are explicitly given in Section~\ref{s:CC}; see Figure~\ref{f:StekEigCenAnn}(right). 

For $b=3$, the FBMS  has BC positioned with centers on an equilateral triangle inscribed on a great circle of the sphere; see Figure~\ref{f:surf34}. 
Interestingly, the holes in the domain, $\Omega_{c,r}$, are slightly asymmetrically configured; see Figure~\ref{f:alldisks}. 
The densities plotted in Figure~\ref{f:rho23} do not exhibit symmetry.
The eigenfunctions plotted in Figure~\ref{f:eigs23} do not exhibit symmetries, but this could be a result of our (arbitrary) choice within the three dimensional eigenspace. 

For $b=4$, the FBMS has BC positioned with centers at the vertices of a regular tetrahedron; see Figure~\ref{f:surf34}. 
This is further illustrated in Figure~\ref{f:platonic}, where the BC are overlaid on a regular tetrahedron. 
 A similar minimal surface was computed in  \cite{Girouard2020} and the value of $\tilde \sigma_1$ is within $10^{-4}$; see Table~\ref{t:valComp1}. 
In Figure~\ref{f:alldisks}, the holes in the domain, $\Omega_{c,r}$, are slightly asymmetrically configured. 
In Figure~\ref{f:rho4}, the density on the outer boundary is nearly constant and the densities on the inner boundaries are similar to each other. 
There is no clear structure to the eigenfunctions potted in Figure~\ref{f:eigs45}. 

For $b=5$, the FBMS has BC positioned with centers at the vertices of a triangular bipyramid; see Figure~\ref{f:surf51215}. 
In Figure~\ref{f:alldisks}, the holes in the domain, $\Omega_{c,r}$, are not only asymmetrically configured, but the radii of the holes vary. 
In Figure~\ref{f:rho4}, the density on the outer boundary is nearly constant and the densities on the inner boundaries are similar to each other. 
Again, the eigenfunctions plotted in Figure~\ref{f:eigs45} do not appear to be structured. 

For $b=6$, the FBMS has BC positioned with centers at the vertices of a regular octahedron; see Figure~\ref{f:surf51215}. 
This is further illustrated in Figure~\ref{f:platonic}, where the BC are overlaid on a regular octahedron. 
Again, a similar minimal surface was computed in  \cite{Girouard2020} and the value of $\tilde \sigma_1$ is within $5 \times 10^{-3}$; see Table~\ref{t:valComp1}. 
In Figure~\ref{f:alldisks}, the holes in the domain, $\Omega_{c,r}$, are  slightly asymmetrically configured; 
there is a small hole near the origin and four holes of equal radii roughly centered at the vertices of a square. 
In Figure~\ref{f:rho4}, the density on the outer boundary is nearly constant and the densities on the inner boundaries are similar to each other. 

For $b=7$, the FBMS has BC  positioned at the vertices of a pentagonal bipyramid. Figures of the FBMS can be found at \cite{EdouardWebpage}. 
In Figure~\ref{f:alldisks}, the domain, $\Omega_{c,r}$, has a small (uncentered) hole surrounded by five holes. 

For $b=8$, the FBMS has BC positioned  at the vertices of a square antiprism; see \cite{EdouardWebpage} and Figure~\ref{f:platonic}. 
Interestingly, we obtain $\tilde \sigma_1 \approx 16.4954$ for this surface, which is larger than the value obtained for the FBMS with BC at the vertices of a cube, as discussed in \cite{Girouard2020}, with value $\tilde \sigma_1 \approx 16.4249$; see Table~\ref{t:valComp1}. 
In Figure~\ref{f:alldisks}, the domain, $\Omega_{c,r}$, has three smaller holes surrounded by four larger holes. 

For $b=9$, the FBMS has BC positioned at the vertices of a triaugmented triangular prism. Figures of the FBMS can be found at \cite{EdouardWebpage}. 
In Figure~\ref{f:alldisks}, the domain, $\Omega_{c,r}$, has three smaller holes surrounded by five larger holes. 

For $b= 12$, the FBMS has BC positioned at the vertices of a regular icosahedron; see Figure~\ref{f:surf51215}. 
This is further illustrated in Figure~\ref{f:platonic}, where the BC are overlaid with a regular icosahedron. 
A similar minimal surface was computed in  \cite{Girouard2020} and the value of $\tilde \sigma_1$ is within $10^{-4}$; see Table~\ref{t:valComp1}. 
In Figure~\ref{f:alldisks}, the domain, $\Omega_{c,r}$, have one small uncentered hole, surrounded by five medium-sized holes, surrounded by five larger holes. 

For $b=15$ the FBMS is plotted in Figure~\ref{f:surf51215}. The FBMS has BC that are positioned with centers with triangular symmetry. 

For $b=20$ the FBMS has irregularly located BC; a figure can be found at \cite{EdouardWebpage}. 
Interestingly, we obtain $\tilde \sigma_1 \approx 19.7076$ for this surface, which is larger than the value obtained for the FBMS with BC at the vertices of a regular dodecahedron, as discussed in \cite{Girouard2020}, with value $\tilde \sigma_1 \approx 19.57189$; see Table~\ref{t:valComp1}. 

\medskip

We have observed that the FBMS for $b=8$ and $20$ do not have BCs centered at the vertices of a platonic solid. 
It seems that the positions of the BCs are related to the minimizing configurations for Thompson's problem; known as the Fekete points \cite{fekete1923verteilung, brownmin}. 

We note that the FBMS obtained here are closely related to the $k$-noid surfaces; see \cite{Bloomington}. It may be appropriate to the FBMS computed here as \emph{critical $k$-noids}.

\subsection{Higher eigenvalues ($j\geq 2$)} 
Here, we consider the extremal Steklov eigenvalue problem \eqref{e:MaxSteklov2}, for higher eigenvalues, $\tilde \sigma_j$, $j\geq 2$. 
Less in known in this case and, in particular, the multiplicity of the optimal eigenvalue, and hence the dimension in which the FBMS exists, is unknown. 

We recall from  \cite{Fan_2014} (see also Section~\ref{s:CC}) that by maximizing $\sigma_j$ for odd $j$ among  rotationally symmetric annuli yields an $\frac{j+1}{2}$ covering of the critical catenoid, a FBMS with $b=2$ boundary components and $j$-th normalized Steklov eigenvalue,  
$$
\tilde \sigma_j = \frac{j+1}{2} \tilde \sigma_1^\star, 
\qquad \qquad j \ \textrm{odd}. 
$$
We also recall the result of \cite[Theorem 5.3]{Fraser2019}, that the degenerate surface consisting of  the critical catenoid glued to $j -1$ unit disks, 
is a FBMS with $b=2$ boundary components  in $3+ 2(j-1)$ dimensions with $j$-th normalized Steklov eigenvalue, 
$$
\tilde \sigma_j = \tilde \sigma_1 + (j-1) 2 \pi. 
$$

We first consider $b=2$ BC and eigenvalue $j=2$. In 
this case, the density  $\rho$ on the outer boundary of the punctured disk becomes degenerate and resembles the $\rho$ 
discussed in Example~\ref{e:ex3} and displayed in Figure~\ref{f:critical_caternoid_glue_disk_to_a_disk}.
We believe that this $\rho$ corresponds to the critical catenoid glued to a disc, but this is difficult to resolve using our numerical method; see Example~\ref{e:ex5}. For other higher eigenvalues, we see similar phenomena for some initializations of  $\rho$. However, there are a few values of eigenvalue number $j$ and BC $b$, that give interesting local maximizers and are very robust with respect to the initialization. 

For $b=2$ BC and   $j=3$ eigenvalue,  we obtain a double covering of the critical catenoid as obtained by \cite{Fan_2014}; 
see \cite{EdouardWebpage}.   
The value obtained is $\tilde \sigma_3 = 2 \tilde \sigma_1^* \approx 20.9496$.
This is a local maximizer \cite[Theorem 5.3]{Fraser2019}; 
we can obtain the value $\tilde \sigma_j = \tilde \sigma_1^* + 4 \pi \approx 23.0412$  by gluing a critical catenoid to two disks.

For $b=3$ BC the FBMS obtained by maximizing the $j=3$ and $j=5$ eigenvalues are displayed in  Figure~\ref{f:surflambda35}. 
If Figures~\ref{f:eigs3lambda3} and \ref{f:eigs3lambda5}, the first few eigenfunctions are plotted in the optimal domains, $\Omega_{c,r}$. 
The eigenvalues obtained are 
$\tilde \sigma_3 = 23.6659$ 
and 
$\tilde \sigma_5 = 34.5317$. 
Note that, again, these are local maximizers since larger eigenvalues can be obtained by gluing two or four balls to the surface attained by maximizing the first eigenvalue with $b=3$ BC, to obtain eigenvalues 
$\tilde \sigma_3 = 12.0120 + 2 \cdot 2 \cdot \pi \approx 24.5784$
and
$\tilde \sigma_5 = 12.0120 + 2 \cdot 4 \cdot  \pi  \approx 37.1447$.

\section{Discussion} \label{s:Disc}

In this paper, we developed computational methods to maximize 
the length-normalized $j$-th Steklov eigenvalue, 
$\tilde \sigma_j(\Sigma, g) := \sigma_j(\Sigma, g)  L(\partial  \Sigma,g)$ 
over the class of smooth Riemannian metrics, $g$ on a compact surface, $\Sigma$, with genus $\gamma$ and $b$ boundary components. 
Our numerical method involves 
(i) using conformal uniformization of multiply connected domains to avoid explicit parameterization for the class of metrics,  
(ii) accurately solving a boundary-weighted Steklov eigenvalue problem in multi-connected domains, and
(iii) developing gradient-based optimization methods for this non-smooth eigenvalue optimization problem. 
Using the connection due to Fraser and Schoen \cite{Fraser_2015}, the solutions to this
extremal Steklov eigenvalue problem  for various values of $b$ boundary components are used to generate free boundary minimal surfaces. 

In hindsight, it may have been better to perform these computations on a punctured sphere rather than a punctured disk, as a punctured disk distinguishes one boundary (the `outer' one). In particular, by considering a punctured sphere, it may be that the holes appear more symmetrically than for a punctured disk; see Figure~\ref{f:alldisks}. 

Beyond further exploring higher eigenvalues $j$ and higher numbers of boundary components $b$, there are a number of interesting extensions of this work. In particular, we would be very interested to compute extremal Steklov eigenvalues on the M\"obius band, torus, and other higher genus surfaces and use the associated eigenfunctions to generate free boundary minimal surfaces. 
We're also interested in related extremal eigenvalue problems, involving convex combinations of Steklov eigenvalues or Steklov eigenvalues for the $p$-Laplacian.

\subsection*{Acknowledgements}
The authors would like to thank the Mathematics Division, National Center of Theoretical Sciences, Taipei, Taiwan for hosting a research pair program during June 15-June 30, 2019 to support this project. 
Chiu-Yen Kao acknowledges partial support from NSF DMS 1818948.  
Braxton Osting acknowledges partial support from NSF DMS 17-52202.  
\'Edouard Oudet  acknowledges partial support from CoMeDiC (ANR-15-CE40-0006) and ShapO (ANR-18-CE40-0013). 
The authors would also like to thank Bruno Colbois, Joel Dahne, Baptiste Devyver, Alexandre Girouard,  Mikhail Karpukhin, Jean Lagace and Iosif Polterovich for useful conversations.


\bibliographystyle{alphaurl}
\bibliography{refs.bib}



\end{document}